\newtheorem{theorem}{Theorem}[section]
\newtheorem{prop}[theorem]{Proposition}
\newtheorem{lemma}[theorem]{Lemma}
\newtheorem{cor}[theorem]{Corollary}
\newtheorem{question}[theorem]{Question}
\newtheorem{obs}[theorem]{Observation}
\newtheorem{mainthm}{Theorem}
\theoremstyle{definition}
\newtheorem{definition}[theorem]{Definition}
\newtheorem{remark}[theorem]{Remark}
\newtheorem{example}[theorem]{Example}
\numberwithin{equation}{section}
\newcommand{\A}{\mathbb{A}}
\newcommand{\C}{\mathbb{C}}
\newcommand{\N}{\mathbb{N}}
\newcommand{\Q}{\mathbb{Q}}
\newcommand{\R}{\mathbb{R}}
\newcommand{\Z}{\mathbb{Z}}
\newcommand{\Kbar}{\overline{\kappa}}
\newcommand{\PP}{\mathbb{P}}
\newcommand{\wt}{\widetilde}
\newcommand{\ol}{\overline}
\newcommand{\Hom}{\operatorname{Hom}}
\newcommand{\rank}{\operatorname{rank}}
\newcommand{\mult}{\operatorname{mult}}
\newcommand{\length}{\operatorname{length}}
\newcommand{\id}{\operatorname{id}}
\newcommand{\Ima}{\operatorname{Im}}
\tikzstyle{vertex}=[circle, draw, inner sep=0pt, minimum size=5pt]
\newcommand{\FujitahGrid}[1]{\begin{tikzpicture}[sibling distance=1cm, level distance=1cm, baseline, x=0.7cm, y=1.5cm]
	\draw[line width=0.3mm, black]
	(-1.5,1) -- (2.5,1) node[right] {$M_n$}
	(-1.5,-1) -- (2.5,-1) node[right] {$\ol{M}_n$}
	(-1,1.2) node[above] {$\ell_0$} -- (-1,-1.2)
	(0,1.2) node[above] {$\ell_1$} -- (0,-1.2)
	(1,1.2) node[above] {$\ell_2$} -- (1,-1.2);
	\filldraw[black] (0,-1) circle(2pt) node[above right] {$p_1$};
	\filldraw[black] (1,-1) circle(2pt) node[above right] {$p_2$};
	\end{tikzpicture}}
\newcommand{\Fujitamiddle}[1]{\begin{tikzpicture}[sibling distance=1cm, level distance=1cm, baseline, x=0.8cm, y=0.7cm]
	\draw[line width=0.3mm, black]
	(-3,2.5) -- (3,2.5) node[right] {$M_n$} node[pos=0.9,above]{$-n$}
	
	(-3,-2.5) -- (3,-2.5) node[right] {$\overline{M}_n$} node[pos=0.9,below]{$n-2$}
	
	(-2.5,3) node[above] {$\ell_0$} -- (-2.5,-3) node[pos=0.5,left]{$0$};
	
	\draw[line width=0.3mm, black, name path=x]
	(-0.5,3) .. controls (-1,1.5) .. (-2,0) node[pos=0,left]{$\ell_1'$} node[pos=0.5,right]{$-1$};
	
	\draw[line width=0.3mm, black, name path=y]
	(1.5,3) .. controls (1,1.5) .. (0,0) node[pos=0,left]{$\ell_2'$}
	node[pos=0.5,right]{$-1$};
	
	\draw[line width=0.3mm, black, dashed, name path=x'] 
	(-2,0.5) .. controls (-1,-1.5) .. (-0.5,-3) node[pos=1,left]{$D_1$}node[pos=0.5,right]{$-1$};
	
	\draw[line width=0.3mm, black, dashed, name path=y']
	(0,0.5) .. controls (1,-1.5) .. (1.5,-3) node[pos=1,left]{$D_2$}node[pos=0.5,right]{$-1$};
	
	\fill [black, name intersections={of=x and x'}] (intersection-1) circle (2pt) node[right] {$\;q_1$};
	
	\fill [black, name intersections={of=y and y'}] (intersection-1) circle (2pt) node[right] {$\;q_2$};
	\end{tikzpicture}}
\newcommand{\Fujitah}[1]{\begin{tikzpicture}[sibling distance=1cm, level distance=1cm, baseline, x=0.8cm, y=0.7cm]
	\draw[line width=0.3mm, black]
	(-3,2.5) -- (3,2.5) node[right] {$M_n$} node[pos=0.9,above]{$-n$}
	
	(-3,-2.5) -- (3,-2.5) node[right] {$\overline{M}_n$} node[pos=0.9,below]{$n-2$}
	
	(-2.5,3) node[above] {$\ell_0$} -- (-2.5,-3) node[pos=0.5,left]{$0$}
	
	(-0.5,3) .. controls (-1,1.5) .. (-2,0.5) node[pos=0,left]{$\ell_1''$}
	node[pos=0.5,right]{$-2$}
	
	(1.5,3) .. controls (1,1.5) .. (0,0.5) node[pos=0,left]{$\ell_2''$}
	node[pos=0.5,right]{$-2$}
	
	(-2,-0.5) .. controls (-1,-1.5) .. (-0.5,-3) node[pos=1,left]{$D_1'$}
	node[pos=0.5,right]{$-2$}
	
	(0,-0.5) .. controls (1,-1.5) .. (1.5,-3) node[pos=1,left]{$D_2'$}
	node[pos=0.5,right]{$-2$};

	\draw[line width=0.3mm, black, dashed] 
	(-2,1) .. controls (-1.4,0) .. (-2,-1) node[pos=0.5,left]{$E_1$}
	node[pos=0.7,right]{$-1$}
	
	(0,1) .. controls (0.6,0) .. (0,-1) node[pos=0.5,left]{$E_2$}
	node[pos=0.7,right]{$-1$};
	\end{tikzpicture}}
\begin{document}

\title[On Stein spaces with finite homotopy rank-sum]{On Stein spaces with finite homotopy rank-sum}

\author[I. Biswas]{Indranil Biswas}

\address{Department of Mathematics, Shiv Nadar University, NH91, Tehsil
Dadri, Greater Noida, Uttar Pradesh 201314, India}

\email{indranil.biswas@snu.edu.in, indranil29@gmail.com}

\author[B. Hajra]{Buddhadev Hajra}

\address{School of Mathematics, Tata Institute of Fundamental
Research, Homi Bhabha Road, Mumbai 400005, India}

\email{hajrabuddhadev92@gmail.com}

\subjclass[2020]{14F35, 14F45, 14J10, 55P20, 55R10}

\keywords{Rational homotopy, elliptic homotopy type, Eilenberg-MacLane space, Stein space, open algebraic surface,
logarithmic Kodaira dimension}

\begin{abstract}
A topological space (not necessarily simply connected) is said to have \emph{finite homotopy rank-sum} if the sum of the ranks of all higher
homotopy groups (from the second homotopy group onward) is finite. In this article, we consider Stein spaces of arbitrary dimension satisfying
the above rational homotopy theoretic property, although most of this article focuses on Stein surfaces only. We characterize
all Stein surfaces satisfying the finite homotopy rank-sum property. In particular, if such a
Stein surface is affine and every element of its fundamental group is finite, it is either simply connected
or has a fundamental group of order $2$. A detailed classification of the smooth complex
affine surfaces of the non-general type satisfying the finite homotopy rank-sum property is obtained. It turns out
that these affine surfaces are Eilenberg--MacLane spaces whenever the fundamental group is infinite.
\end{abstract}

\maketitle

\tableofcontents

\section{Introduction}\label{se1}

The higher homotopy groups of any topological space $X$ are always abelian, and $\rank(\pi_i(X))$ is defined to be 
the dimension of the $\Q$-vector space $\pi_i(X)\otimes_\Z\Q$. The rank of the second homotopy group of the wedge sum $S^1\vee S^2$ is infinite.
However, in \cite{Ser}, J.-P. Serre proved that all the higher homotopy groups are finitely generated for a simply connected finite CW complex
$X$ (see also \cite[p.~504]{Spa}). More generally, if $\pi_1(X)$ is a
finite group, all the higher homotopy groups of $X$ are finitely generated. 


A simply connected topological space $X$ is of \emph{rationally elliptic homotopy type} (cf. \cite[Part VI, \S~32]{Fel-Hal-Tho2}) if it satisfies the
following two conditions:
\begin{align}
 &\sum\limits_{i\,\geq\,2}\rank(\pi_i(X))\,=\,\sum\limits_{i\,\geq\,2}\dim \pi_i(X)\otimes_\Z \Q\,<\, \infty
,\label{Defn: Finite homotopy rank-sum}
\end{align}
and
\begin{align}
&\sum\limits_{i\,\geq\,0}b_i(X)\,=\,\sum\limits_{i\,\geq\, 0}\dim(H^i(X;\,\Q))\,<\,\infty,
\end{align}
where $b_i(X)$ denotes the $i$-th Betti number of $X$.

A topological space $X$ (not necessarily simply connected) is said to have \emph{finite homotopy rank-sum} or is said to \emph{satisfy the finite 
homotopy rank-sum property} if $$\sum\limits_{i\,\geq\,2}\rank(\pi_i(X))\,=\,\sum_{i\,\geq\,2}\dim \pi_i(X)\otimes_\Z \Q \,<\, \infty$$ (see \eqref{Defn: Finite homotopy rank-sum}) or equivalently if
\begin{enumerate}
\item[(a)] $\dim \pi_i(S)\otimes_\Z \Q \,<\, \infty$ for all $i\,\geq\, 2$, and
\item[(b)] there is an integer $N\,\geq\, 2$ such that $\dim \pi_i(S)\otimes_\Z \Q \,=\, 0$ for all $i\,\geq\, N$, i.e., all but finitely
many higher homotopy groups are torsion abelian groups.
\end{enumerate}

An example of elliptic homotopy type topological space is the real sphere $S^n$ of any positive dimension $n$. Any
complex rational homogeneous space is of elliptic homotopy type (see \cite[Example 2.5]{Amo-Bis}). Compact K{\"a}hler manifolds of the elliptic homotopy type of complex dimension up to $3$ were described in \cite{Amo-Bis}.

Our aim here is to characterize all complex Stein surfaces (not necessarily normal) having
finite homotopy rank-sum. For the general definition of a Stein complex analytic space, we refer to \cite[Chapter IX, \S~4]{Sha}.
In particular, we will classify all smooth complex affine surfaces of non-general type whose universal cover is of elliptic homotopy type.

The following dichotomy result for complex Stein surfaces extends a standard dichotomy result
for simply connected ($1$-connected) finite CW complexes (the result for simply connected finite CW complexes
is recalled in Theorem \ref{Thm: E-H dichotomy for finite complexes}).

\begin{mainthm}[{Theorem \ref{Thm: Marked dichotomy for Stein surfaces}}]\label{thm:mainA}
Let $X$ be a complex connected Stein surface. Then the higher homotopy groups of $X$ satisfy one of the two mutually exclusive properties:
\begin{enumerate}
\item[(i)] $\sum_{i\geq2}\dim \pi_i(X)\otimes_\Z \Q \,\leq\, 2$.

\item[(ii)] $\sum_{i=2}^{k}\dim \pi_i(X)\otimes_\Z \Q$ grows exponentially in $k$, i.e., there exists a real number $C\,>\,1$ such that $\sum_{i=2}^{k}
\dim \pi_i(X)\otimes_\Z \Q \,>\, C^k$ for all integers $k$ large enough.
\end{enumerate}
\end{mainthm}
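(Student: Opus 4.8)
The plan is to strip away the complex-analytic content as fast as possible and reduce the statement to the rational homotopy theory of a simply connected two-dimensional CW complex, where the asserted dichotomy is exactly the classical elliptic/hyperbolic alternative for wedges of spheres. First I would invoke the bound on the homotopy dimension of a Stein space (Hamm's extension of the Andreotti--Frankel theorem): a Stein space of complex dimension $n$, not assumed smooth or normal, has the homotopy type of a CW complex of real dimension at most $n$. Applied to the Stein surface $X$, this gives $X\simeq K$ for a CW complex $K$ with $\dim K\,\leq\, 2$. Passing to universal covers, $\widetilde{X}\simeq\widetilde{K}$, and since a covering space of an $n$-dimensional CW complex is again $n$-dimensional, $\widetilde{K}$ is a simply connected CW complex of dimension $\leq 2$. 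As $\pi_i(X)\,\cong\,\pi_i(\widetilde{X})$ for every $i\,\geq\, 2$, the whole statement concerns only the higher homotopy of $\widetilde{K}$, and no further analytic input is needed.

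The second step is the structural identification $\widetilde{K}\,\simeq\,\bigvee_{j}S^2$, a wedge of $2$-spheres indexed by a set of cardinality $r\,=\,\rank H_2(\widetilde{K};\,\Z)$. This rests on the standard fact that a simply connected CW complex of dimension $\leq 2$ is homotopy equivalent to a wedge of $2$-spheres: simple connectivity allows one to replace $\widetilde K$ up to homotopy by a complex with a single $0$-cell and no $1$-cells, after which the surviving $2$-cells are all attached to a point. The number of spheres is $r$, which is well defined because $H_2$ of a $2$-complex, being the kernel of $\partial_2$ inside a free abelian group, is itself free abelian.

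With this in place the dichotomy is purely a rational homotopy computation. If $r=0$, then $\widetilde{K}$ is contractible and $\sum_{i\geq 2}\dim\pi_i(X)\otimes_\Z\Q\,=\,0$. If $r=1$, then $\widetilde{K}\simeq S^2$, whose minimal Sullivan model $(\Lambda(x_2,y_3),\,dy=x^2)$ shows the rational homotopy is $\Q$ in degrees $2$ and $3$ and zero otherwise, so the sum equals $2$; hence alternative (i) holds, with value $0$ or $2$. If $2\,\leq\, r\,<\,\infty$, then $\widetilde{K}\simeq\bigvee^r S^2$ is a simply connected finite CW complex whose rational homotopy Lie algebra is the free graded Lie algebra on $r\geq 2$ generators placed in degree $1$; the graded dimensions of a free Lie algebra on at least two generators grow exponentially (the necklace/Witt count gives dimension $\sim r^n/n$ in degree $n$), so $\widetilde K$ is not rationally elliptic. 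By the classical dichotomy recalled in Theorem \ref{Thm: E-H dichotomy for finite complexes} it is therefore rationally hyperbolic, which is precisely the exponential lower bound $\sum_{i=2}^{k}\dim\pi_i(X)\otimes_\Z\Q\,>\,C^k$ of alternative (ii) for some $1<C<r$. Finally, if $r=\infty$, already $\pi_2(X)\otimes_\Z\Q$ is infinite-dimensional and (ii) holds a fortiori. Mutual exclusivity is immediate, since the partial sums are either bounded by $2$ or unbounded.

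The main obstacle, and the only genuinely geometric point, is the very first step: I must ensure the homotopy-dimension bound is available for \emph{arbitrary} Stein surfaces, including singular and non-normal ones, so that it is Hamm's singular version of Andreotti--Frankel that I quote rather than the manifold statement. Once that reduction is secured, everything downstream is standard homotopy theory: recognizing the simply connected $2$-complex $\widetilde{K}$ as a wedge of $2$-spheres, and reading off the trichotomy $r\in\{0,1\}$ versus $r\geq 2$ from the rational homotopy of $S^2$ and the exponential growth of free graded Lie algebras on two or more generators, the latter being exactly the content of Theorem \ref{Thm: E-H dichotomy for finite complexes} specialized to these wedges.
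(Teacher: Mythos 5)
Your proposal is correct, and it takes a genuinely different route from the paper on the elliptic side of the dichotomy. The paper splits on whether $\sum_{i\geq 2}\dim \pi_i(X)\otimes_\Z\Q$ is finite: in the finite case it combines the Hurewicz theorem, Narasimhan's vanishing theorem (Theorem \ref{Narasimhan's Theorem}) and the Friedlander--Halperin bound \cite{Fri-Hal} to obtain the estimate by $n(\wt{X})\leq 2$, and only in the infinite case does it identify $\wt{X}$ as a Moore space --- using Hamm's freeness of $H_2$ of a Stein surface --- hence as a wedge of $2$-spheres, before invoking Theorem \ref{Thm: E-H dichotomy for finite complexes}. You instead perform the wedge identification uniformly at the outset, using only Hamm's homotopy-dimension theorem \cite[Proposition 2]{Ham} and the classical fact that a simply connected CW complex of dimension at most $2$ is a wedge of $2$-spheres, and then read everything off from $r=\rank H_2(\wt{X};\Z)$: this removes Narasimhan's theorem and Friedlander--Halperin from the proof entirely and even yields the sharper conclusion that in case (i) the sum is exactly $0$ or $2$, never $1$. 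What the paper's heavier machinery buys is generality: the Friedlander--Halperin mechanism is precisely what drives Theorems \ref{Thm: Bound of the rank of homotopy graded algebra for Stein spaces of dimension up to 3} and \ref{Thm: Bound of the rank of homotopy graded algebra for certain Stein spaces of dimension at least 4} in higher dimension, where no wedge decomposition exists; your argument buys economy and precision in dimension $2$. Your hyperbolic case agrees with the paper's (wedge of $r\geq 2$ spheres plus the finite-complex dichotomy), and your Witt-count remark on the free graded Lie algebra makes even that citation nearly self-contained. One presentational caution: the cell-trading justification of the wedge decomposition (``no $1$-cells'') is the one loosely stated step, and naive cell-trading in a $2$-complex is delicate; the airtight version, which also covers the infinite-rank case you need when $\pi_2(X)\otimes_\Z\Q$ is infinite-dimensional, is to note that $H_2(\wt{K};\Z)=\ker\partial_2$ is free abelian, choose a basis, represent it by maps $S^2\to \wt{K}$ via Theorem \ref{Hurewicz Theorem}, and apply the homology Whitehead theorem to the resulting map from the wedge, using $\dim \wt{K}\leq 2$ to kill homology above degree $2$ --- thereby also replacing the paper's appeal to Hamm's Korollar for freeness of $H_2$ by a self-contained observation.
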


Section \ref{se4} considers complex connected Stein spaces of arbitrary dimension. Under the assumption that the total rank of the higher homotopy groups is finite, a bound for the total rank has been found. The following are the two main results in this direction.

\begin{mainthm}[{Theorem \ref{Thm: Bound of the rank of homotopy graded algebra for Stein spaces of dimension up to 3}}]\label{thm:mainB}
Let $X$ be a complex connected Stein space of dimension at most $3$. Then $X$ satisfies the finite homotopy rank-sum property
if and only if $$\sum_{i\geq2}\dim \pi_i(X)\otimes_\Z \Q \,\leq\, \dim X.$$
\end{mainthm}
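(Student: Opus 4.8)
The forward implication is immediate, since a bound $\sum_{i\ge2}\dim\pi_i(X)\otimes_\Z\Q\le\dim X$ already forces the sum to be finite. The content lies in the reverse implication, and the plan is to reduce to the simply connected case, upgrade finiteness of rational homotopy to finiteness of rational cohomology using the Stein dimension bound, and then read off the inequality from Poincaré duality in low formal dimension.

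First I would pass to the universal cover $\wt{X}\to X$. Since the Stein property is inherited by covering spaces, $\wt{X}$ is again a connected Stein space of the same dimension, now simply connected, and $\pi_i(\wt{X})\cong\pi_i(X)$ for every $i\ge2$; thus it suffices to bound $\sum_{i\ge2}\dim\pi_i(\wt{X})\otimes_\Z\Q$. Writing $V:=\pi_*(\wt{X})\otimes_\Z\Q$, which is finite-dimensional and concentrated in degrees $\ge2$ by hypothesis, the rational homotopy type of $\wt{X}$ is encoded by a minimal Sullivan model $(\Lambda V,d)$ whose generators all lie in degree $\ge2$.

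The crux is to show that $\wt{X}$ is \emph{rationally elliptic}, i.e.\ that $H^*(\wt{X};\Q)$ is finite-dimensional. Because $V$ is finite-dimensional and every generator has degree $\ge2$, each graded piece $(\Lambda V)^i$ is finite-dimensional, hence so is each $H^i(\wt{X};\Q)=H^i(\Lambda V,d)$. On the other hand, a Stein space of complex dimension $\dim X$ has the homotopy type of a CW complex of real dimension at most $\dim X$, so $H^i(\wt{X};\Q)=0$ for $i>\dim X$. Combining the two statements shows that $H^*(\wt{X};\Q)$ is finite-dimensional, whence $\wt{X}$ is elliptic. I expect this to be the main obstacle: finiteness of the rational homotopy by itself does \emph{not} force finiteness of the rational cohomology, as $K(\Z,2)$ has rational homotopy of total dimension $1$ but infinite-dimensional rational cohomology, and it is precisely the Stein dimension bound that excludes such behaviour.

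Finally I would invoke the structure theory of elliptic spaces (cf.\ \cite[Part VI, \S~32]{Fel-Hal-Tho2}): a simply connected elliptic space satisfies rational Poincaré duality of some formal dimension $n_0$, and here $n_0\le\dim X\le3$ by the vanishing above. It then remains to enumerate the possibilities, using that the differential of a minimal model is decomposable and that there are no generators in degree $1$, so that any degree-$2$ even generator survives to a nonzero class in $H^2$. For $n_0\le1$ the space is rationally trivial and the sum is $0$. For $n_0=2$, duality gives $H^*(\wt{X};\Q)\cong H^*(S^2;\Q)$, forcing the model $\Lambda(x_2,y_3)$ with $dy=x^2$ and hence $\sum_{i\ge2}\dim\pi_i(\wt{X})\otimes_\Z\Q=2$; this case occurs only when $\dim X\ge2$. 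For $n_0=3$, duality gives $H^*(\wt{X};\Q)\cong H^*(S^3;\Q)$, forcing a single odd generator in degree $3$ and $\sum=1$. In every case $\sum_{i\ge2}\dim\pi_i(\wt{X})\otimes_\Z\Q\le\dim X$, which is the desired inequality; if one prefers, the general inequality $\sum_i\dim\pi_i\otimes_\Z\Q\le n_0$ for elliptic spaces yields the bound at once. A secondary point to verify carefully is that both inputs — that the universal cover remains Stein, and that the CW-dimension estimate holds — persist for possibly singular, non-normal Stein spaces.
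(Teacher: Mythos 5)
Your proposal is correct, and its skeleton coincides with the paper's: pass to the universal cover $\wt{X}$ (which is again Stein), show $H^\ast(\wt{X};\Q)$ is finite-dimensional so that $\wt{X}$ is rationally elliptic, and then bound the total homotopy rank by the formal dimension, which the Stein dimension estimate caps at $\dim X$. The two technical steps are, however, executed differently. For ellipticity the paper argues in an elementary, dimension-specific way: the Hurewicz isomorphism $\pi_2(\wt{X})\cong H_2(\wt{X};\Z)$ together with the surjectivity of $h_3$ for simply connected CW complexes (Theorem \ref{Hurewicz Theorem}) bounds $H_2$ and $H_3$, the universal coefficient theorem transfers this to cohomology, and Theorem \ref{Narasimhan's Theorem} kills degrees above $3$; the final bound is then quoted from Friedlander--Halperin \cite[Corollary 1.3]{Fri-Hal}, with no model-theoretic enumeration. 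You instead extract finite type of $H^\ast(\wt{X};\Q)$ from the finite-dimensionality of $V\cong\pi_\ast(\wt{X})\otimes_\Z\Q$ in the minimal Sullivan model, and you close with a hands-on Poincar\'e-duality classification of elliptic models of formal dimension $\le 3$. Your route buys two things: the enumeration actually yields the sharper bound $\sum_{i\ge2}\dim\pi_i(X)\otimes_\Z\Q\le 2$ even when $\dim X=3$ (the paper's statement only claims $\le\dim X$), and your ellipticity step is dimension-independent, whereas the $h_3$-surjectivity trick is precisely what confines the paper's method to $\dim X\le 3$ and forces the extra vanishing hypotheses in Theorem \ref{Thm: Bound of the rank of homotopy graded algebra for certain Stein spaces of dimension at least 4}.

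One step needs shoring up. The identification of the minimal-model generators with the dual of the homotopy groups, $V^n\cong\Hom(\pi_n(\wt{X}),\Q)$, is proved in the standard reference under the hypothesis that the rational homology is of finite type (cf.\ \cite[Theorem 15.11]{Fel-Hal-Tho2}) --- which is exactly what you are using it to establish, so quoted naively the step is circular: a Stein space need not have the homotopy type of a finite complex, and a priori $H^i(\wt{X};\Q)$ could be infinite-dimensional for $i\le\dim X$. The repair is standard and keeps your argument intact: for a simply connected space, finite-dimensionality of $\pi_i\otimes_\Z\Q$ in every degree implies finite-dimensionality of $H_i(\,\cdot\,;\Q)$ in every degree, by induction up the rational Postnikov tower using the Serre spectral sequences of the principal fibrations with fibers $K(\pi_n,n)$, whose rational cohomology is the free graded-commutative algebra on $\pi_n\otimes_\Z\Q$ in degree $n$ (alternatively, cite a treatment of Sullivan models valid without finite-type hypotheses). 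Your remaining inputs are fine: coverings of Stein spaces are Stein (as the paper also uses without comment), and Hamm's CW-dimension bound \cite[Proposition 2]{Ham} holds for arbitrary, possibly singular Stein spaces, so it serves interchangeably with the paper's appeal to Narasimhan \cite{Nar} for the vanishing above $\dim X$.
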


\begin{mainthm}[{Theorem \ref{Thm: Bound of the rank of homotopy graded algebra for certain Stein spaces of dimension at least 4}}]\label{thm:mainC}
Let $X$ be a complex connected Stein space of dimension at least $4$. Then the following assertions hold. 

\begin{enumerate}
\item Assume that 
\begin{equation*}
\pi_i(X)\otimes_\Z \Q \,=\,0, \quad \forall\ \ \, 2\,\leq\,i\,\leq\, \bigg\lfloor \frac{\dim X}{2} \bigg\rfloor,
\end{equation*}
where $\lfloor\,.\,\rfloor$ denotes the greatest integer function. Then $X$ satisfies the finite homotopy rank-sum property if and only if 
$$\sum_{i\geq2}\dim \pi_i(X)\otimes_\Z \Q \,\leq\, \dim X.$$
\item Moreover, if $\dim X\,=\,4$ and $\pi_2(X)\otimes_\Z\Q\,=\,0$, then $X$ satisfies the finite homotopy rank-sum property
if and only if $$\sum_{i\geq2}\dim \pi_i(X)\otimes_\Z \Q \,\leq\, 3.$$
\end{enumerate}
\end{mainthm}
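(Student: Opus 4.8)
The plan is to reduce everything to the simply connected case and then invoke the structure theory of rationally elliptic spaces. Since $\pi_i(X)\,\cong\,\pi_i(\widetilde{X})$ for every $i\,\geq\,2$, where $\widetilde{X}\to X$ is the universal cover, and since a covering space of a Stein space is again Stein of the same complex dimension, I would replace $X$ by the simply connected Stein space $\widetilde{X}$ without altering any $\rank\pi_i$ for $i\,\geq\,2$. The ``if'' direction of each equivalence is immediate, as a bound $\sum_{i\geq2}\dim\pi_i(X)\otimes_\Z\Q\,\leq\,\dim X$ (respectively $\leq 3$) is in particular finite; so the entire content lies in the ``only if'' direction, where I assume finite homotopy rank-sum and must extract the numerical bound.

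First I would establish ellipticity. A Stein space of complex dimension $\dim X$ has the homotopy type of a CW complex of real dimension at most $\dim X$, so $H^i(\widetilde{X};\,\Q)\,=\,0$ for $i\,>\,\dim X$; in particular the formal dimension $n$ of $\widetilde{X}$ satisfies $n\,\leq\,\dim X$. Finite homotopy rank-sum means that $V\,:=\,\pi_*(\widetilde{X})\otimes_\Z\Q$ is finite-dimensional, so its minimal Sullivan model $(\Lambda V,\,d)$ has $V$ finite-dimensional; consequently each graded piece $(\Lambda V)^i$ is finite-dimensional, and since the cohomology vanishes above degree $\dim X$ this forces $\dim_\Q H^*(\widetilde{X};\,\Q)\,<\,\infty$. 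Thus $\widetilde{X}$ is rationally elliptic, and I may use the standard numerics: $\dim V^{\mathrm{even}}\,\leq\,\dim V^{\mathrm{odd}}$ together with the formal-dimension identity $n\,=\,\sum_{\text{odd gen }y}\deg y-\sum_{\text{even gen }x}(\deg x-1)$.

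Next I would feed in the connectivity hypothesis. In part (1) every generator of $V$ lies in degree $>\lfloor\dim X/2\rfloor$, so any decomposable element of $\Lambda V$ lies in degree $>\dim X\,\geq\,n$; hence the quadratic part of $d$ vanishes in the relevant range, and an even generator $x$ can be made nilpotent in cohomology only by an odd ``killing'' generator of degree $\geq 2\deg x-1$. A short bookkeeping argument then yields $\sum_{\text{odd}}(\deg y-1)\,\geq\,\sum_{\text{even}}\deg x$, which, inserted into the formal-dimension identity, is precisely $\dim V\,=\,\sum_{i\geq2}\dim\pi_i(X)\otimes_\Z\Q\,\leq\,n\,\leq\,\dim X$. (The estimate $\dim V\,\leq\,n$ in fact holds for every elliptic space; the role of the connectivity assumption is to make it an elementary consequence of the degree restrictions, bypassing the weighted complete-intersection input.) For part (2), where $\dim X\,=\,4$ and $V^2\,=\,0$, the same reasoning forces every generator into degree $\geq 3$ with $n\,\leq\,4$, and I would rule out $\dim V\,\geq\,4$ by checking the admissible pairs $(\dim V^{\mathrm{even}},\,\dim V^{\mathrm{odd}})$ subject to $\dim V^{\mathrm{even}}\,\leq\,\dim V^{\mathrm{odd}}$: each of $(0,4)$, $(1,3)$, $(2,2)$ forces $n\,\geq\,8$ through the formal-dimension identity once the required killing generators of degree $\geq 2\deg x-1$ are accounted for, contradicting $n\,\leq\,4$. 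This leaves $\dim V\,\leq\,3$.

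The main obstacle I anticipate is the control of the even-degree generators: such a generator contributes $+1$ to the rank but $-(\deg x-1)\,\leq\,-1$ to the formal dimension, so a naive degree count does not close. The resolution is the ellipticity constraint that the powers of each even generator must die in cohomology, cheapest when $x$ is paired with an odd generator of degree $2\deg x-1$; quantifying this — via the associated pure model and the socle-degree identity for the resulting weighted complete intersection in general, or directly from the degree gaps under the connectivity hypothesis — is exactly what converts ``rank versus formal dimension'' into the desired inequality. The secondary, lighter obstacle is separating the paired (killing) odd generators from the free ones in the bookkeeping, which is where $\dim V^{\mathrm{even}}\,\leq\,\dim V^{\mathrm{odd}}$ is used.
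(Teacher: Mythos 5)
Your proposal is correct in substance, but it takes a genuinely different route from the paper's. The paper proves ellipticity of the universal cover $\wt{X}$ via the Rational Hurewicz theorem (Klaus--Kreck): the vanishing hypothesis up to $\lfloor \dim X/2\rfloor$ yields Hurewicz isomorphisms through degree $2\lfloor \dim X/2\rfloor+1 \,\geq\, \dim X$, so $H_i(\wt{X};\Q)$ is finite-dimensional exactly in the range where Narasimhan's theorem does not already give vanishing; the numerical bounds are then quoted wholesale from Friedlander--Halperin — Corollary 1.3 gives $\sum\rank\pi_i \leq n(\wt{X}) \leq \dim X$ for part (1), and their arithmetic conditions $\sum_{k}2k\,x_{2k}\leq 4$ and $\sum_k(2k-1)x_{2k-1}\leq 7$, supplemented by one further Rational Hurewicz argument showing $x_5=0$, give part (2). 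You instead argue inside the minimal Sullivan model: your ellipticity step ($V$ finite-dimensional forces each $(\Lambda V)^i$, hence each $H^i$, to be finite-dimensional, with Narasimhan truncating above $\dim X$) notably uses no connectivity hypothesis at all, whereas the paper's does — though you should first invoke the standard fact that a simply connected space with rational homotopy of finite type has rational homology of finite type (Postnikov induction), so that the model is known to compute $H^*(\wt{X};\Q)$ and circularity is avoided. Your numerical step then re-derives, elementarily in this degree range, exactly what the paper cites from Friedlander--Halperin: generators above $\lfloor\dim X/2\rfloor$ put all decomposables above $n$, so even generators are surviving cocycles whose pairwise products (degree $>n$) must be exact, forcing odd killers of degree $2\deg x-1$, and the formal-dimension identity gives $\dim V\leq n$. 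The trade is real: the paper buys brevity by citation, you buy self-containedness, and in part (2) your bookkeeping is in fact stronger than needed (the same pair analysis also excludes $\dim V=3$, giving the sharp bound $2$, attained by the complex affine quadric fourfold, which is homotopy equivalent to $S^4$), so your claimed $\leq 3$ is safe. Two small repairs: your enumeration in part (2) treats only $\dim V=4$, so add the monotonicity remark that each additional generator (with its killer) only increases the right-hand side of the formal-dimension identity, excluding $\dim V\geq 5$ a fortiori; and for several even generators of equal degree the injection into odd killers needs the observation that all products $x_ix_j$ are cocycles in degree $>n$, so the quadratic part of $d$ restricted to $V^{2d-1}$ must cover their span.
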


The following two are the main results in Section \ref{se5}.

\begin{mainthm}[{Corollary \ref{Cor: Characterization of E-type Stein surfaces}}]\label{thm:mainD}
Let $X$ be a complex connected Stein surface. Then $X$ satisfies the finite homotopy rank-sum property if and only if one of the following two statements hold:
\begin{enumerate}
\item[(i)] $\pi_2(X)\,=\,0$, {\underline{\rm or}} equivalently, $X$ is an Eilenberg--MacLane $K(\pi_1(X),\,1)$-space.

\item[(ii)] $\pi_2(X)\,=\,\Z$, {\underline{\rm or}} equivalently the universal covering of $X$ is homotopic to $S^2$.
\end{enumerate}
\end{mainthm}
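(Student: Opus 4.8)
The plan is to deduce everything from the structure of the universal cover $\wt X$ together with Theorem~\ref{thm:mainB} specialized to $\dim X = 2$. First I would invoke the classical fact (Andreotti--Frankel/Hamm type, recalled earlier) that a complex Stein surface has the homotopy type of a CW complex of real dimension at most $2$. Passing to the universal cover, $\wt X$ is then a simply connected $2$-dimensional CW complex, so the Hurewicz isomorphism gives $\pi_2(X)\,\cong\,\pi_2(\wt X)\,\cong\,H_2(\wt X;\Z)$. Since the second homology of a $2$-complex is the kernel of the second cellular boundary map, it is a subgroup of a free abelian group, hence free abelian; write $r\,:=\,\rank \pi_2(X)\,=\,\dim \pi_2(X)\otimes_\Z\Q$. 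Choosing a basis of $\pi_2(\wt X)$ and representing each basis element by a map $S^2\to\wt X$ produces $f\colon \bigvee_{j=1}^{r} S^2\to \wt X$ that is an isomorphism on integral homology; as both sides are simply connected CW complexes, Whitehead's theorem upgrades $f$ to a homotopy equivalence $\wt X\,\simeq\,\bigvee_{j=1}^{r} S^2$.

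With this identification in hand, the corollary reduces to a case analysis on $r$. If $r\,=\,0$, then $\wt X$ is contractible, so $\pi_i(X)\,=\,0$ for all $i\,\geq\, 2$ and $X$ is a $K(\pi_1(X),\,1)$-space, giving exactly statement (i) with rank-sum $0$. If $r\,=\,1$, then $\wt X\,\simeq\, S^2$, and the well-known rational homotopy of the $2$-sphere yields $\dim \pi_2(X)\otimes_\Z\Q\,=\,\dim \pi_3(X)\otimes_\Z\Q\,=\,1$ and $\pi_i(X)\otimes_\Z\Q\,=\,0$ for $i\,\geq\, 4$, so $\sum_{i\geq 2}\dim \pi_i(X)\otimes_\Z\Q\,=\,2$; this is statement (ii). In both cases the finite homotopy rank-sum property holds, and conversely each of (i), (ii) forces $r\,\in\,\{0,1\}$. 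Invoking Theorem~\ref{thm:mainB} (with $\dim X\,=\,2$), the finite homotopy rank-sum property is equivalent to $\sum_{i\geq 2}\dim \pi_i(X)\otimes_\Z\Q\,\leq\, 2$, so it remains only to show that this inequality is incompatible with $r\,\geq\, 2$.

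I expect the one genuinely substantive point to be this exclusion of $r\,\geq\, 2$. Here $\wt X\,\simeq\,\bigvee_{r} S^2$, and the nonvanishing Whitehead products $[\iota_i,\,\iota_j]$ already force $\dim \pi_3(X)\otimes_\Z\Q\,\geq\, 1$; combined with $\dim \pi_2(X)\otimes_\Z\Q\,=\,r\,\geq\, 2$ this gives $\sum_{i\geq 2}\dim \pi_i(X)\otimes_\Z\Q\,\geq\, 3\,>\,2$, contradicting the criterion of Theorem~\ref{thm:mainB}. (Equivalently, the homotopy Lie algebra of $\bigvee_{r} S^2$ is the free graded Lie algebra on $r\,\geq\, 2$ generators in degree $1$, which is infinite-dimensional with exponentially growing ranks, placing $X$ in alternative (ii) of the dichotomy Theorem~\ref{thm:mainA}.) Thus $r\,\leq\, 1$, completing the equivalence. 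The rest is bookkeeping once $\pi_2(X)$ has been pinned down as free abelian and $\wt X$ as a wedge of $2$-spheres; the rational-homotopy heavy lifting is already carried by Theorems~\ref{thm:mainA} and~\ref{thm:mainB}.
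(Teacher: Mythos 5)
Your argument is correct, but it takes a genuinely different route from the paper's own proof. The paper deduces the corollary by passing to the universal cover and invoking Theorem \ref{Characterization of Elliptic Homotopy Type Surfaces}, whose key exclusion step is arithmetic: the Friedlander--Halperin inequalities together with Narasimhan's vanishing give $\sum_{k\geq 1}2k\dim\pi_{2k}(\wt{X})\otimes_\Z\Q\,\leq\, n(\wt{X})\,\leq\,2$, which kills $\rank\pi_2\,\geq\,2$ in one line without ever looking at $\pi_3$; then freeness of $H_2$ (Hamm) and Moore-space recognition pin down $\wt{X}$ as a point or $S^2$. You instead take Theorem \ref{Thm: Bound of the rank of homotopy graded algebra for Stein spaces of dimension up to 3} as a black box for the numeric criterion $\sum_{i\geq 2}\dim\pi_i(X)\otimes_\Z\Q\,\leq\,2$, build the wedge model $\wt{X}\,\simeq\,\bigvee_{r}S^2$ directly, and rule out $r\,\geq\,2$ via the rational nontriviality of the Whitehead products $[\iota_i,\,\iota_j]$ (Hilton's theorem), equivalently the rational hyperbolicity of a wedge of at least two spheres. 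Two small points of hygiene: the step upgrading your homology isomorphism $f\colon\bigvee_r S^2\to\wt{X}$ to a homotopy equivalence needs the homology Whitehead theorem for simply connected CW complexes (after replacing $\wt{X}$ by a CW model, which Hamm provides), not Theorem \ref{Whitehead's Theorem} as literally stated --- this is exactly the Moore-space uniqueness the paper cites; and the rational nonvanishing of $[\iota_1,\,\iota_2]$ for $r\,=\,2$ deserves its one-line justification via Hilton's splitting $\pi_3(S^2\vee S^2)\cong\pi_3(S^2)^{\oplus 2}\oplus\Z[\iota_1,\iota_2]$. It is worth noting that your wedge-plus-hyperbolicity mechanism is precisely how the paper proves Case 2 of the dichotomy Theorem \ref{Thm: Marked dichotomy for Stein surfaces}, so in effect you re-derive the corollary from Theorems \ref{Thm: Marked dichotomy for Stein surfaces} and \ref{Thm: Bound of the rank of homotopy graded algebra for Stein spaces of dimension up to 3} rather than from Theorem \ref{Characterization of Elliptic Homotopy Type Surfaces}. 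What each approach buys: the paper's inequality argument is shorter and avoids any computation of $\pi_3$, while yours is more elementary homotopy theory (Hurewicz, Hilton, Whitehead products) at the cost of needing the explicit wedge model; both ultimately rest on the same two Stein inputs, namely Hamm's two-dimensional CW model (equivalently, freeness of $H_2$) and Narasimhan's vanishing above degree $2$.
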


\begin{mainthm}[{Corollary \ref{Cor:Finite Fundamental Group of Elliptic Type Affine Surface}}]\label{thm:mainE}
Let $X$ be a complex affine surface satisfying the finite homotopy rank-sum property. If $X$ is not simply connected, and
every element of $\pi_1(X)$ has finite order, then $\pi_1(X)\,\cong\,\Z/2\Z$ and the Euler-Poincar\'e characteristic of $X$ is $1$. 
\end{mainthm}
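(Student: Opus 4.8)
The plan is to feed the hypotheses directly into the characterization of Theorem \ref{thm:mainD}. Since $X$ is affine it is in particular a Stein surface, so the finite homotopy rank-sum assumption places $X$ into exactly one of the two cases there: either (i) $\pi_2(X)=0$ and $X$ is a $K(\pi_1(X),1)$, or (ii) $\pi_2(X)=\Z$ and the universal cover $\wt{X}$ is homotopy equivalent to $S^2$. First I would record two structural facts used throughout. An affine surface has the homotopy type of a finite CW complex of real dimension at most $2$ (Andreotti--Frankel/Hamm); hence $\pi_1(X)$ is finitely presented and all the Euler characteristics appearing below are well defined. Moreover $\wt{X}$ is again Stein, and $G:=\pi_1(X)$ acts freely on it by deck transformations.

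Next I would eliminate case (i). If $\pi_2(X)=0$, then $X=K(G,1)$, so $H^{\ast}(G;\,-)=H^{\ast}(X;\,-)$ with arbitrary local coefficients; but $X$ has homotopy dimension at most $2$, forcing $\operatorname{cd}(G)\leq 2$. Since $G\neq 1$ and every element of $G$ has finite order, $G$ contains a subgroup isomorphic to $\Z/p\Z$ for some prime $p$, and $\operatorname{cd}(G)\geq\operatorname{cd}(\Z/p\Z)=\infty$, contradicting $\operatorname{cd}(G)\leq 2$. Hence we must be in case (ii): $\wt{X}\simeq S^2$, so $\chi(\wt{X})=2$ and $H^{q}(\wt{X};\,\Q)=\Q$ for $q\in\{0,2\}$ and vanishes otherwise.

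The heart of the argument is to bound the finite subgroups of $G$. Let $H\leq G$ be any finite subgroup; it acts freely on $\wt{X}\simeq S^2$. I would compare two computations of $\chi(\wt{X}/H)$: multiplicativity of the Euler characteristic for the $|H|$-sheeted covering $\wt{X}\to\wt{X}/H$ gives $\chi(\wt{X}/H)=\chi(\wt{X})/|H|=2/|H|$, while the transfer isomorphism $H^{\ast}(\wt{X}/H;\,\Q)\cong H^{\ast}(\wt{X};\,\Q)^{H}$ gives $\chi(\wt{X}/H)=1+\dim_{\Q}H^{2}(\wt{X};\,\Q)^{H}\in\{1,2\}$, since $H^{2}(\wt{X};\,\Q)\cong\Q$ has invariant subspace of dimension $0$ or $1$. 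Comparing the two forces $|H|\leq 2$. Applying this to the cyclic subgroup generated by an arbitrary (finite-order) element shows every element of $G$ has order $1$ or $2$; thus $G$ has exponent $2$, hence is an elementary abelian $2$-group, and being finitely generated it is $(\Z/2\Z)^{k}$. If $k\geq 2$, then $G$ would contain a copy of $(\Z/2\Z)^{2}$ of order $4$, contradicting $|H|\leq 2$; and $k\geq 1$ because $X$ is not simply connected. Therefore $G\cong\Z/2\Z$.

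Finally, since $G=\pi_1(X)$ has order $2$, the covering $\wt{X}\to X$ is a double cover, and multiplicativity of the Euler characteristic yields $\chi(X)=\chi(\wt{X})/2=1$, as claimed. The one point demanding care — the main, if modest, obstacle — is the well-definedness and multiplicativity of Euler characteristics for the a priori non-compact spaces $\wt{X}$ and $\wt{X}/H$; this is exactly what the homotopy equivalence $\wt{X}\simeq S^2$ (giving finite total rational Betti number) together with the transfer isomorphism secures, legitimizing the entire counting argument.
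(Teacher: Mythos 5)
Your proposal is correct and takes essentially the paper's route: it feeds the hypotheses into the dichotomy of Corollary \ref{Cor: Characterization of E-type Stein surfaces}, rules out the $K(\pi_1(X),1)$ alternative because a finite-dimensional aspherical complex has torsion-free fundamental group (your cohomological-dimension argument is just the standard proof of the fact the paper cites from Hatcher), and then runs the same Euler-characteristic count on quotients $\wt{X}/H$ of $\wt{X}\simeq S^2$ by finite subgroups, with your rational transfer isomorphism playing exactly the role that Giesecke's surjectivity of $p_{\ast}$ on rational homology plays in the paper's Theorem \ref{Fundamental Group of Elliptic Type Stein Surface}. The only divergences are organizational --- you bound all finite subgroups at once and conclude $\pi_1(X)\cong\Z/2\Z$ directly, whereas the paper first proves the order-two statement for individual torsion elements and then solves $2^n e(X)=2$ for the group $(\Z/2\Z)^n$ --- and the multiplicativity of $e$ for these non-compact covers, which you rightly flag as the delicate point, is asserted in the paper (its \eqref{eqn:covering euler char}) with no more justification than you supply.
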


Section \ref{se6} is entirely devoted to the smooth complex affine surfaces of the non-general type for
which the sum of the ranks of the higher homotopy groups is finite. The following are the two main results proved there.

\begin{mainthm}[{Proposition \ref{Prop: Infinite pi_1 and finite rank pi_2 imply EM-ness for smooth affine non-general type surfaces}}]\label{thm:mainF}
Let $X$ be a smooth complex affine surface of the non-general type having an infinite fundamental group. If $\pi_2(X)$ is
of finite rank, then $X$ is an Eilenberg--MacLane $K(\pi,\,1)$-space.
\end{mainthm}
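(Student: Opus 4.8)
The plan is to reduce the assertion to the single vanishing $\pi_2(X)=0$ and then to extract it from the geometry of non-general type affine surfaces. Since a smooth complex affine surface $X$ is Stein, the Andreotti--Frankel theorem shows that $X$ has the homotopy type of a finite CW complex of real dimension at most $2$. Hence its universal cover $\wt X$ is simply connected with the homotopy type of a $2$-dimensional CW complex, and by the Hurewicz theorem $\pi_2(X)\cong\pi_2(\wt X)\cong H_2(\wt X;\Z)$, which is free abelian. The finite-rank hypothesis forces $H_2(\wt X;\Z)\cong\Z^k$ with $k=\rank\pi_2(X)<\infty$, so $\wt X$ is homotopy equivalent to a wedge $\bigvee_k S^2$. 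If $k=0$ then $\wt X$ is contractible and $X$ is an Eilenberg--MacLane $K(\pi_1(X),1)$-space; it therefore suffices to prove $k=0$, i.e.\ $\pi_2(X)=0$.

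To this end I would invoke the structure theory of smooth affine surfaces with $\Kbar(X)\le 1$. After replacing $X$ by a finite \'etale cover---which changes neither $\wt X$ nor $\pi_i(X)$ for $i\ge 2$, nor the infiniteness of $\pi_1(X)$---such a surface admits a fibration $f\colon X\to B$ onto a smooth curve whose general fiber is $\A^1$, $\C^*$, or a smooth elliptic curve; the universal cover of each of these is $\C$, hence contractible. Granting control of $f$, the total space $X$ is then homotopy equivalent to the base orbifold $B$, so $\wt X$ is homotopy equivalent to the universal cover of $B$, which by uniformization is $\C$, the unit disc, or $S^2$. In the first two (aspherical) cases $k=0$ and we are done; the third gives $\wt X\simeq S^2$, i.e.\ $k=1$. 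In particular $k\le 1$, so by Theorem~\ref{thm:mainA} the exponentially growing alternative is excluded and $X$ satisfies the finite homotopy rank-sum property; Corollary~\ref{thm:mainD} then leaves only $\pi_2(X)=0$ or $\pi_2(X)=\Z$, and it remains to rule out the latter.

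The last exclusion is where the hypothesis that $\pi_1(X)$ is infinite is decisive. The case $\pi_2(X)=\Z$ means $\wt X\simeq S^2$, which by the previous paragraph occurs precisely when the base orbifold $B$ has finite (orbifold) fundamental group; the classification of non-general type affine surfaces with $\pi_2\neq 0$ identifies these as finite free quotients of the affine quadric $\{z_1^2+z_2^2+z_3^2=1\}\subset\C^3$, each of which has finite $\pi_1$. Hence an infinite fundamental group forces $\pi_2(X)=0$, whence $\wt X$ is contractible and $X$ is a $K(\pi_1(X),1)$-space. The main obstacle is making the fibration step rigorous: $f$ is not locally trivial, its degenerate fibers on an affine surface are weighted unions of affine curves, and multiple fibers of a $\C^*$- or elliptic fibration perturb only the orbifold fundamental group of $B$, so the homotopy exact sequence of a fibration cannot be applied verbatim. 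The delicate point is to verify that none of these degenerations introduces a nonzero class into $\pi_2$ and to pin down the base orbifold; this is exactly where affineness---the absence of complete rational curves---is indispensable, and where the contrast with the $\pi_1$-finite case is sharpest, the affine quadric being the non-general type affine surface that realizes $\wt X\simeq S^2$ while being simply connected.
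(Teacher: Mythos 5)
Your opening reduction is fine and agrees with the paper's framework: since $X$ and $\wt{X}$ are Stein surfaces, Hamm's and Narasimhan's theorems give $H_i(\wt{X};\Z)=0$ for $i\geq 3$ and $H_2(\wt{X};\Z)$ free, so $\wt{X}$ is a Moore space, a wedge of $k$ copies of $S^2$, and the goal is $k=0$. But everything after that has two serious gaps. First, your structure claim is not available in the form you use it: a fibration on an affine surface has affine fibers, so a smooth elliptic curve cannot occur as general fiber; and for $\Kbar(X)=0$ there is \emph{no} general fibration theorem --- the paper has to invoke Kojima's classification of strongly minimal models and an Euler-characteristic comparison ($e(X_0)\leq e(X)$ with equality iff $X=X_0$) to land on $\C^*\times\C^*$ or Fujita's $H[-1,0,-1]$. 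Moreover, ``$X$ is homotopy equivalent to the base orbifold'' is false even in the nonsingular case: a $\C^*$-bundle such as $\C^*\times\C^*$ is not homotopy equivalent to its base $\C^*$. The part you explicitly defer as ``the main obstacle'' is in fact the entire technical content of the paper's proof: showing that any singular fiber containing a component with reduction $\A^1$ (or a cross) forces $b_2(\wt{X})=\infty$, hence $\rank\pi_2(X)=\infty$ (the paper's Lemmas \ref{Lem: infinite b_2} and \ref{Lem: infinite b_2 for C*-fibration having affine lines in fibers}, proved by Mayer--Vietoris and compactly supported cohomology arguments on the universal cover); that a $\C^*$-fibration over $\PP^1$ all of whose fibers are multiple $\C^*$'s contradicts Steinness (Lemma \ref{Lem: e=0 surface admitting C*-fibration over P1 is never affine}); and the counting of multiple fibers via Xiao Gang's lemma together with the ramified covering trick to reduce to the multiple-fiber-free case. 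Naming the obstacle is not overcoming it.

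Second, your exclusion of $\pi_2(X)=\Z$ is circular. You appeal to ``the classification of non-general type affine surfaces with $\pi_2\neq 0$'' as finite free quotients of the affine quadric, but that classification is precisely Theorem \ref{Classification of E-type affine surfaces of non-general type} of the paper, which is \emph{deduced from} the proposition you are proving. Nor can finiteness of $\pi_1$ be recovered by pure homotopy theory from $\wt{X}\simeq S^2$: the manifold $S^2\times S^1$ has infinite fundamental group $\Z$ and universal cover $S^2\times\R\simeq S^2$, so some algebro-geometric input is unavoidable --- and it is exactly the fibration analysis you skipped. In the paper the implication runs the other way: the fibration lemmas show directly that infinite $\pi_1$ and finite-rank $\pi_2$ force $X$ to be an $\A^1$-bundle over a curve $\not\cong\A^1,\PP^1$ (when $\Kbar=-\infty$), or one of Kojima's two surfaces (when $\Kbar=0$), or a $\C^*$-fibration with $e(X)=0$ and all fibers reduced-$\C^*$ (when $\Kbar=1$), each of which is aspherical; the case $\wt{X}\simeq S^2$ never needs to be excluded separately because it simply never arises along that route. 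Your claim that $\wt{X}\simeq S^2$ happens ``precisely when the base orbifold has finite orbifold fundamental group'' is likewise unsubstantiated. As it stands, the proposal is a correct reduction plus a sketch whose two pivotal steps are, respectively, missing and circular.
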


\begin{mainthm}[{Theorem \ref{Classification of E-type affine surfaces of non-general type}}]\label{thm:mainG}
Let $X$ be a smooth complex affine surface of non-general type satisfying the finite homotopy rank-sum property. Then
one of the following three statements holds:
\begin{enumerate}
\item[(i)] $X$ is an Eilenberg--MacLane $K(\pi_1(X),\,1)$-space.

\item[(ii)] $X$ is a smooth affine $\Q$-homology plane with $\pi_1(X)\,=\,\Z/2\Z$.

\item[(iii)] $X$ is homotopically equivalent to the sphere $S^2$.
\end{enumerate}
\end{mainthm}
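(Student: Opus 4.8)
The plan is to feed $X$ into the dichotomy of Theorem~\ref{thm:mainD} (recall that an affine variety over $\C$ is Stein) and then analyze the two resulting alternatives separately. Since $X$ satisfies the finite homotopy rank-sum property, Theorem~\ref{thm:mainD} yields that either $\pi_2(X)=0$ or $\pi_2(X)=\Z$. In the first case $X$ is by definition an Eilenberg--MacLane $K(\pi_1(X),1)$-space, which is precisely alternative (i); so essentially all of the work is concentrated in the case $\pi_2(X)=\Z$, where the equivalent formulation in Theorem~\ref{thm:mainD} tells us that the universal cover $\widetilde X$ is homotopy equivalent to $S^2$.

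Assume therefore $\pi_2(X)=\Z$. First I would rule out an infinite fundamental group: here $\pi_2(X)$ has rank $1$, hence finite rank, so if $\pi_1(X)$ were infinite then Theorem~\ref{thm:mainF} (this is the one place where the non-general type hypothesis is genuinely used) would force $X$ to be aspherical, that is $\pi_2(X)=0$, contradicting $\pi_2(X)=\Z$. Thus $\pi_1(X)$ is finite. The deck-transformation action of $\pi_1(X)$ on $\widetilde X\simeq S^2$ is free, and since a smooth affine surface is Stein of complex dimension $2$ it has the homotopy type of a finite CW complex; hence the Euler characteristic is multiplicative along the finite regular cover $\widetilde X\to X$, giving $\chi(X)=\chi(\widetilde X)/|\pi_1(X)|=2/|\pi_1(X)|$. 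As $\chi(X)\in\Z$, this forces $|\pi_1(X)|\in\{1,2\}$.

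If $|\pi_1(X)|=1$ then $X=\widetilde X\simeq S^2$, which is alternative (iii). If $|\pi_1(X)|=2$ then $\pi_1(X)\cong\Z/2\Z$ (equivalently one may invoke Theorem~\ref{thm:mainE}, which also records $\chi(X)=1$), and it remains only to identify $X$ as a $\Q$-homology plane. For this I would apply the transfer isomorphism for the free $\Z/2\Z$-action over $\Q$, namely $H_*(X;\Q)\cong H_*(\widetilde X;\Q)^{\Z/2\Z}$. Since $\widetilde X\simeq S^2$ has rational homology concentrated in degrees $0$ and $2$, this immediately gives $H_1(X;\Q)=0$ together with $H_2(X;\Q)\cong H_2(S^2;\Q)^{\Z/2\Z}$; the value $\chi(X)=1$ then forces the degree-$2$ invariants to vanish, so that $\widetilde H_*(X;\Q)=0$ and $X$ is a smooth affine $\Q$-homology plane with $\pi_1(X)\cong\Z/2\Z$, which is alternative (ii).

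The main obstacle I anticipate is organizing the case $\pi_2(X)=\Z$ so that the three structural inputs interlock correctly: Theorem~\ref{thm:mainF} is needed purely to eliminate an infinite $\pi_1(X)$, after which the finiteness of $\pi_1(X)$ makes both the Euler-characteristic count and the rational transfer argument elementary. The points requiring care are the two homotopy-theoretic translations that legitimize the count — that $X$, being Stein of complex dimension $2$, has finite CW homotopy type, and that the induced $\Z/2\Z$-action on $H_2(\widetilde X;\Q)$ is nontrivial, which is exactly the content encoded by $\chi(X)=1$.
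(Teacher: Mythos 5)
Your argument is correct, and it takes a genuinely different (and somewhat leaner) route than the paper's. The paper splits on the fundamental group rather than on $\pi_2$: infinite $\pi_1(X)$ is dispatched by Proposition \ref{Prop: Infinite pi_1 and finite rank pi_2 imply EM-ness for smooth affine non-general type surfaces} directly (yielding case (i)), the simply connected case by Theorem \ref{Characterization of Elliptic Homotopy Type Surfaces}, and the nontrivial finite case by Corollary \ref{Cor:Finite Fundamental Group of Elliptic Type Affine Surface}, whose proof in turn rests on Theorem \ref{Fundamental Group of Elliptic Type Stein Surface} (every torsion element of $\pi_1$ of such a Stein surface has order $2$) plus the structure theorem for finitely generated abelian groups to pin down $\pi_1(X)\cong(\Z/2\Z)^n$ before running the count $2^n e(X)=2$; it also needs the fact that a finite $K(\pi,1)$ complex has torsion-free fundamental group in order to conclude that the universal cover is homotopic to $S^2$ rather than contractible. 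By splitting instead on $\pi_2(X)\in\{0,\Z\}$ via Corollary \ref{Cor: Characterization of E-type Stein surfaces}, you make $\wt{X}\simeq S^2$ an \emph{input} rather than a deduction, so the bare count $|\pi_1(X)|\cdot e(X)=e(S^2)=2$ (with $e(X)\in\Z$) already forces $|\pi_1(X)|\in\{1,2\}$, and neither the order-$2$ torsion theorem nor the structure-theorem detour is ever needed. Your transfer isomorphism $H_*(X;\Q)\cong H_*(\wt{X};\Q)^{\Z/2\Z}$ then plays the role of the paper's bookkeeping $b_2(X)=e(X)+b_1(X)-1=0$ (where $b_1(X)=0$ already follows from finiteness of $\pi_1(X)$, so the transfer is not strictly necessary, though it is a clean way to package the vanishing in all degrees at once). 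One small correction of attribution: the finite CW homotopy type of $X$, which underwrites the multiplicativity of the Euler characteristic along the finite cover $\wt{X}\to X$, comes from $X$ being \emph{affine} (Hamm's theorem, as recalled in Section \ref{se3}), not from Steinness of dimension $2$ alone --- a Stein surface is in general only homotopy equivalent to a possibly infinite $2$-dimensional CW complex; since your $X$ is affine this slip is harmless, but the justification should cite affineness.
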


A $\Z$-homology (respectively, $\Q$-homology) plane is defined to be a smooth irreducible complex affine algebraic surface $V$ such that 
$H_i(V;\,\Z)\,=\,0$ (respectively, $H_i(V;\,\Q)\,=\,0$) for all $i \,\geq\,1$. The class of smooth affine ($\Z$ or $\Q$)--homology 
planes have been of special interest. The condition that the sum of the ranks of the higher homotopy groups is finite seems to be very 
restrictive for them. In Section \ref{se7}, the following result is proved in this direction.

\begin{mainthm}[{Corollary \ref{Cor: Smooth Q-homology plane of E-type}, 
Corollary \ref{Cor: No non-contractible smooth Z-homology plane of E-type}}]\label{thm:mainH}
Let $X$ be a smooth $\Q$-homology plane of non-general type satisfying the finite homotopy rank-sum property. Then $X$ is either contractible or a smooth
affine $\Q$-homology plane with $\pi_1(X)\,=\,\Z/2\Z$.
 
Moreover, if $X$ is a $\Z$-homology plane, then it is contractible.
\end{mainthm}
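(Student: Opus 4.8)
The plan is to read both assertions off the classification in Theorem~\ref{Classification of E-type affine surfaces of non-general type}, using as extra input only the two defining homological features of a $\Q$-homology plane, namely $H_i(X;\,\Q)\,=\,0$ for $i\,\geq\,1$ (and, in the $\Z$-coefficient refinement, $H_i(X;\,\Z)\,=\,0$ for $i\,\geq\,1$). A smooth $\Q$-homology plane is by definition a smooth complex affine surface, and it is assumed here to be of non-general type with finite homotopy rank-sum, so Theorem~\ref{Classification of E-type affine surfaces of non-general type} applies directly and leaves exactly three possibilities: $X$ is a $K(\pi_1(X),\,1)$-space, or $X$ is a $\Q$-homology plane with $\pi_1(X)\,\cong\,\Z/2\Z$, or $X$ is homotopy equivalent to $S^2$. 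First I would eliminate the spherical case: if $X\,\simeq\,S^2$ then $H_2(X;\,\Q)\,\cong\,H_2(S^2;\,\Q)\,\cong\,\Q\,\neq\,0$, which contradicts $X$ being a $\Q$-homology plane. The second possibility is literally the second alternative of the desired conclusion, so nothing is needed there, and the whole content of the first assertion is concentrated in the aspherical case.

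So suppose $X\,=\,K(\pi,\,1)$ with $\pi\,:=\,\pi_1(X)$; I must show $\pi$ is trivial, i.e.\ $X$ is contractible. Being a smooth affine surface, $X$ has the homotopy type of a CW complex of real dimension at most $2$ (Andreotti--Frankel), so $X\,\simeq\,K(\pi,\,1)$ forces $\mathrm{cd}(\pi)\,\leq\,2$; in particular $\pi$ is torsion-free. Asphericity gives $H_*(\pi;\,\Q)\,\cong\,H_*(X;\,\Q)$, which vanishes in positive degrees because $X$ is a $\Q$-homology plane. If $\mathrm{cd}(\pi)\,=\,1$ then $\pi$ is free, and a nontrivial free group has $H_1(\pi;\,\Q)\,\neq\,0$; so the only remaining way for $\pi$ to be nontrivial is $\mathrm{cd}(\pi)\,=\,2$ with $\pi$ infinite. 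Ruling out such an infinite, torsion-free, rationally acyclic fundamental group is the step I expect to be the main obstacle. I would close it using the structural description of non-general type affine surfaces underlying Theorem~\ref{Classification of E-type affine surfaces of non-general type} and Proposition~\ref{Prop: Infinite pi_1 and finite rank pi_2 imply EM-ness for smooth affine non-general type surfaces}: an infinite fundamental group of such a surface always produces a nonzero first rational Betti number (the aspherical infinite-$\pi_1$ surfaces of this type are the affine- or $\C^*$-ruled ones, for which $H_1(\,\cdot\,;\,\Q)\,\neq\,0$). Since $H_1(X;\,\Q)\,=\,0$ for a $\Q$-homology plane, this is impossible, so $\pi$ is finite; being torsion-free it is trivial, and $X$ is contractible. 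This establishes the first assertion.

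Finally, the $\Z$-homology plane refinement follows formally from the first assertion. Every $\Z$-homology plane is in particular a $\Q$-homology plane and inherits the non-general type and finite homotopy rank-sum hypotheses, so the first assertion already forces $X$ to be either contractible or a $\Q$-homology plane with $\pi_1(X)\,\cong\,\Z/2\Z$. But a $\Z$-homology plane satisfies $H_1(X;\,\Z)\,=\,0$, i.e.\ $\pi_1(X)^{\mathrm{ab}}\,=\,0$, whereas $\pi_1(X)\,\cong\,\Z/2\Z$ has abelianization $\Z/2\Z\,\neq\,0$; hence the second alternative is excluded and $X$ must be contractible. The only point to verify here is the identity $H_1(X;\,\Z)\,\cong\,\pi_1(X)^{\mathrm{ab}}$, which is immediate from the Hurewicz theorem.
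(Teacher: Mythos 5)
Your proposal is correct, and most of its architecture coincides with the paper's: the trichotomy of Theorem \ref{Classification of E-type affine surfaces of non-general type}, the elimination of the $S^2$ case (which the paper leaves implicit, since $b_2(S^2)\neq 0$ is incompatible with a $\Q$-homology plane), and your final $\Z$-homology paragraph is exactly the paper's Corollary \ref{Cor: No non-contractible smooth Z-homology plane of E-type} (Hurewicz plus the fact that $\Z/2\Z$ has nontrivial abelianization). The genuine divergence is in the aspherical case. The paper isolates this as Proposition \ref{Prop: No non-contractible smooth EM Q-homology plane} and argues by splitting on $\Kbar(X)$, quoting the classification of Eilenberg--MacLane affine surfaces (Theorem \ref{Result : Classification}): for $\Kbar=-\infty$ a non-contractible such surface is an $\A^1$-bundle over a curve $B\not\cong\A^1,\PP^1$, so $H_1(X;\Q)\cong H_1(B;\Q)\neq 0$, while for $\Kbar\in\{0,1\}$ the classification forces $e(X)=0$, contradicting $e(X)=1$ for a $\Q$-homology plane. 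You instead run a group-cohomological reduction (cd$(\pi)\leq 2$ from the $2$-dimensional homotopy type, hence $\pi$ torsion-free; Stallings--Swan in the cd-one case) to conclude a nontrivial $\pi$ must be infinite, and then appeal to the infinite-$\pi_1$ structure results (Theorems \ref{Thm: Kappa negative, pi_2 finite rank}, \ref{Thm: Kappa 0, pi_2 finite rank}, \ref{Thm: Kappa 1, pi_2 finite rank} behind Proposition \ref{Prop: Infinite pi_1 and finite rank pi_2 imply EM-ness for smooth affine non-general type surfaces}). This works, and it avoids any separate citation of the torsion-freeness of $\pi_1$ of a finite aspherical complex; but note the cd-one/free-group subcase is redundant, since torsion-freeness alone already shows a nontrivial $\pi$ is infinite, after which your Betti-number contradiction applies uniformly. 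One spot to tighten: your parenthetical claim that the $\C^*$-ruled surfaces have $H_1(\,\cdot\,;\Q)\neq 0$ is not immediate from the ruling alone (the base can be $\A^1$, as for Fujita's $H[-1,0,-1]$); the clean justification, and effectively the paper's, is that Theorems \ref{Thm: Kappa 0, pi_2 finite rank} and \ref{Thm: Kappa 1, pi_2 finite rank} give $e(X)=0$, so the affine-surface identity $e(X)=1-b_1(X)+b_2(X)$ yields $b_1(X)=1+b_2(X)\geq 1$, contradicting rational acyclicity.
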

 
\section{Preliminaries}\label{se2}
Firstly, we will list certain notations and conventions that we follow throughout this article.

\begin{enumerate}
\item[$\bullet$] All the homology and cohomology groups are with integer coefficients unless the coefficient group is specified.

\item[$\bullet$] $H^i_c(X;\,A)$ (for $i\,\in \,\Z_{\geq 0}$) : $i$-th compactly supported cohomology group of a
topological space $X$ with coefficients in an abelian group $A$.

\item[$\bullet$] For a given subspace $Z$ of $X$, the standard notations for the homotopy and homology groups of the pair $(X,\,Z)$
are used.

\item[$\bullet$] If $X$ is a simplicial complex or a finite CW complex, then $\wt{H}_i(X;\,\Z)$ (for $i\,\in \,\Z_{\geq 0}$) denotes 
the $i$-th reduced integral homology group of $X$.

\item[$\bullet$] By ``higher homotopy groups'' of $X$ we always mean all $\pi_i(X)$,\, $i\,\geq\, 2$.

\item[$\bullet$] Whenever we consider smooth algebraic surfaces and their morphisms, these are defined over $\C$.

\item[$\bullet$] The $n$-dimensional affine space and projective space defined over $\C$ are denoted by $\A^n$ and $\PP^n$
respectively.

\item[$\bullet$] For a smooth algebraic surface $S$ we employ the following notation.\\
\indent $\Kbar(S)$ : The logarithmic Kodaira dimension of $S$.\\
For the definition of $\Kbar$, see \cite{Iit}.

\item[$\bullet$] For a (finite) CW-complex $X$ we employ the following notations:\\
\indent $b_i(X) \; (\text{for } i \,\in \,\Z_{>0})$ : The $i$-th Betti number $\dim_{\C}H_i(X, \C)$.\\
\indent $e(X)$ : The Euler--Poincar{\'e} characteristic of $X$, which is defined as
$$e(X)\ :=\ \sum\limits_{i= 0}^{\infty}(-1)^i b_i(X).$$
\end{enumerate}

\subsection{Definitions}

We will recall the definition of certain topological spaces which will be appearing in the subsequent sections.

\begin{definition}[{\bf Eilenberg--MacLane spaces $K(G,\,n)$}]\label{Def: EM Spaces}\mbox{}
A path-connected topological space $X$ is said to be an Eilenberg--MacLane space if for some integer $n \in \Z_{>0}$ and a group $G$,
$$\pi_k(X)\,=\,
\begin{cases}
G, &\text{ for }\, k\,=\,n,\\
0, &\text{ otherwise},
\end{cases}
$$
and it is denoted by $K(G,\, n)$.

For any abelian group $G$ and any integer $n\,\in\, \N$, in the category of CW-complexes an Eilenberg--MacLane $K(G,\,n)$ space exists
and it is unique up to homotopy (cf. \cite[Chapter 8, \S1, Corollary 5]{Spa}, cf. \cite[Corollary 2]{Mos-Tan}). So we
use the same notation $K(G,\,n)$ to denote any CW complex belonging to this homotopy class.
\end{definition}

\begin{definition}[{\bf Moore spaces $M(G,n)$}]\label{Def: Moore Spaces}
A CW-complex $X$ is said to be an $M(G,\,1)$-space for an abelian group $G$ if

$$\widetilde{H}_i(X;\,\Z)=
	\begin{cases}
	G, &\text{for }\, i=1,\\
	0, &\text{otherwise}.
	\end{cases}
	$$
Also, for an integer $n \,\geq\, 2$ and an abelian group $G$, a simply
connected CW complex $X$ is said to be an $M(G,\,n)$-space if $$\widetilde{H}_i(X;\,\Z)=
	\begin{cases}
	G, &\text{for }\, i=n,\\
	0, &\text{otherwise}.
	\end{cases}
	$$
\end{definition}

Given $G$ and $n \,\geq\, 2$, any two Moore $M(G,\,n)$-spaces have the same homotopy type (cf. \cite[Example 4.34]{Hat}).
Consequently, for any $n\,\geq\, 2$, an $M(\Z,\,n)$-space is homotopy equivalent to the (real) $n$-sphere $S^n$. The following immediate consequence of it will be used later. 

\begin{obs}\label{An M(Z,n)-space is homotopic to an n-sphere}
Any $M(\Z,\,n)$-space, with $n\,\geq \,2$, is of rationally elliptic homotopy type.
\end{obs}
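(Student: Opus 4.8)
The plan is to reduce the statement to the sphere $S^n$ and then invoke the classical computation of the rational homotopy of spheres. The discussion immediately preceding this observation already establishes that any $M(\Z,\,n)$-space, for $n\,\geq\,2$, is homotopy equivalent to the (real) $n$-sphere $S^n$. Both conditions defining the rationally elliptic homotopy type --- the finiteness of $\sum_{i\geq2}\dim\pi_i(X)\otimes_\Z\Q$ and the finiteness of $\sum_{i\geq0}b_i(X)$ --- depend only on the homotopy type of $X$, since rational homotopy groups and Betti numbers are homotopy invariants. Hence it suffices to verify these two finiteness conditions for $X\,=\,S^n$.

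First I would check the Betti number condition. The rational cohomology $H^i(S^n;\,\Q)$ is one-dimensional for $i\,=\,0$ and $i\,=\,n$, and vanishes otherwise, so $\sum_{i\geq0}b_i(S^n)\,=\,2\,<\,\infty$.

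Next I would verify the finite homotopy rank-sum condition by appealing to Serre's computation of the rational homotopy groups of spheres. When $n$ is odd, $\pi_i(S^n)\otimes_\Z\Q$ equals $\Q$ for $i\,=\,n$ and vanishes for all other $i$, giving a rank-sum of $1$; when $n$ is even, $\pi_i(S^n)\otimes_\Z\Q$ equals $\Q$ for $i\,=\,n$ and for $i\,=\,2n-1$, and vanishes otherwise, giving a rank-sum of $2$. In either case $\sum_{i\geq2}\dim\pi_i(S^n)\otimes_\Z\Q\,\leq\,2\,<\,\infty$.

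Combining the two finiteness statements shows that $S^n$, and therefore any $M(\Z,\,n)$-space with $n\,\geq\,2$, is of rationally elliptic homotopy type. I do not anticipate any serious obstacle: the reduction to $S^n$ is immediate from the preceding remark, the Betti-number bound is elementary, and the only non-trivial ingredient is the Serre calculation of the rational homotopy groups of spheres, which is entirely standard.
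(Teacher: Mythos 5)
Your proposal is correct and follows exactly the paper's route: the paper derives the observation as an immediate consequence of the uniqueness of Moore spaces (so any $M(\Z,\,n)$-space is homotopy equivalent to $S^n$) together with the fact, noted in the introduction, that $S^n$ is rationally elliptic. Your explicit verification via Serre's computation of $\pi_\ast(S^n)\otimes_\Z\Q$ and the Betti numbers of $S^n$ simply spells out what the paper takes as standard.
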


\begin{remark}
We will use the same definition of Moore $M(G,\,n)$-space for a complex analytic space $X$ which itself might not be a 
CW complex but is homotopically equivalent to a CW complex. Such an extension of the usual definition is possible because a homotopy 
equivalence preserves the reduced homology groups. For example, since a complex Stein space $X$ of 
complex dimension $n$ is homotopic to a CW complex of (real) dimension $n$ \cite[Proposition 2]{Ham},
we will say $X$ is a Moore $M(G,\,n)$-space if the homology 
groups of $X$ satisfy the same conditions as in Definition \ref{Def: Moore Spaces}.
\end{remark}

We will now state some well-known results on non-complete algebraic surfaces
which will be used here.

\subsection{Algebraic and analytic fibrations}

The reader is referred to \cite{GMM} for the definitions and basic properties stated below. By an \emph{algebraic fibration}, we always 
mean a surjective morphism of irreducible complex algebraic varieties $\varphi\,:\, X \,\longrightarrow\, Y$ such that the extension 
$\C(X)/\C(Y)$ of rational function fields is a regular extension, i.e., $\C(Y)$ is algebraically closed in $\C(X)$. Then Bertini's 
theorem, \cite[Theorem 2.26]{Sha1}, says that there is a non-empty Zariski-open subset $U\, \subset\, Y$ such that every fiber of 
$\varphi$ over $U$ is an irreducible algebraic variety. Any closed fiber $\varphi^{-1}(y)$ over $y\, \in\, U$ will be called a 
\emph{general fiber}. The morphism $\varphi$ is said to be an \emph{$F$-fibration} if the general closed fibers of $\varphi$ are 
isomorphic to $F$ as complex algebraic varieties. If $\varphi$ is an $F$-fibration, then every $y \,\in\, U$ has an open neighbourhood 
(in the usual analytic topology) $U_y$ such that $$\varphi^{-1}(U_y)\,\cong\, F \times U_y.$$ This is the local-triviality property of 
$\varphi$ around every closed point $y\,\in\, U$. In general, $\varphi$ may not be a smooth morphism, i.e., $F$ may not always be a 
smooth variety. If $F$ is smooth, a fiber $\varphi^{-1}(z)$ of $\varphi$ is called a \emph{singular fiber} if $\varphi^{-1}(z)$ is not 
isomorphic to $F$. In other words, a fiber over some point $z \,\in\, Y$ is a singular fiber of $\varphi$ if $\varphi$ is not 
locally-trivial around $z$ in the usual analytic topology. If for every point $y\,\in\, Y$, the local triviality for $\varphi$ around 
$y$ holds, we say that $\varphi$ is an \emph{$F$-bundle}. If $F$ is isomorphic to $\A^1$, $\A^1-\{0\}$ or 
$\PP^1$, then an $F$-fibration is called an \emph{$\A^1$-fibration}, \emph{$\A^1_\ast$-fibration}, or \emph{$\PP^1$-fibration} 
respectively.

Likewise, we define the above notions for a fibered surface in the analytic set-up. For a smooth connected complex analytic surface
$V$, and a smooth connected complex analytic curve $C$ (i.e., a Riemann surface), a surjective holomorphic map $$f\,:
\,V \,\longrightarrow \, C$$ is called an \emph{analytic fibration} if all the fibers of $f$ are of pure dimension $1$ and all but finitely many of the fibers are smooth and connected. Like in the algebraic situation described above, there is an analytic open subset 
$N$ of $C$ such that all fibers over points in $N$ are analytically isomorphic and around every point $p\,\in \,N$, the 
map $f$ is locally a topological fiber bundle. Such fibers are called \emph{general fibers} in this context. Here we will 
mostly consider analytic $\C$-fibrations and $\C^*$-fibrations.

Next, we recall the definition of \emph{total multiplicity} of a fiber of an algebraic fibration defined on an algebraic surface.

\begin{definition}
Let $\varphi\,:\, X\,\longrightarrow\, C$ be a surjective morphism from a smooth algebraic surface $X$ to a smooth algebraic
curve $C$. Let $F$ be a fiber of $\varphi$ having irreducible components $F_1,\, \cdots, \,F_r$ with
$r\,\in \,\N$, and let $m_i\,:=\,\length(\mathcal{O}_{F_i,\xi_i})$, where $\xi_i$ is the generic point of $F_i$,
be the \emph{multiplicity} of $F_i$. The irreducible component $F_i$ is reduced if and only if $m_i
\,= \,1$. Scheme-theoretically $F$ can be expressed as $$F\,=\,\sum\limits_{i=1}^{r}F_i\,=\,\sum\limits_{i=1}^{r}m_i\cdot(F_i)_{\textrm{red}}.$$
\begin{itemize}
\item The \emph{total multiplicity} (or, \emph{multiplicity}) of $F$ is $\mult(F)\,:=\,\gcd(m_1,\,\cdots, \,m_r)$. 
\item If $\mult(F)\,>\,1$, then $F$ is called a \emph{multiple fiber}. \item If $\mult(F)\,=\,1$, we call $F$ a \emph{non-multiple fiber}.
\end{itemize}
\end{definition}

\subsection{Singular fibers of a $\C^*$-fibration}

We now recall the description of any singular fiber of a $\C^*$-fibration $f\,:\,X\,\longrightarrow\, B$ from a smooth affine surface $X$ onto a smooth algebraic
curve $B$ (cf. \cite[Lemma 2.9]{Miy-Sug1}).
A singular fiber $F$ is of the form $\Gamma \sqcup \Delta$ such that:
\begin{enumerate}
\item $\Gamma$ is either empty, or $m\C^*$ with $m\,\in\, \N$, or it is $m_1A_1\cup m_2A_2$ with $\gcd(m_1,\,m_2)\,=\,1$, where $A_i$ is
isomorphic to $\A^1$ for $i\,=\,1,2$ and $A_1$, $A_2$ intersect transversally and intersect exactly at one point.

\item $\Delta$ is either empty, or a disjoint union of copies of $\A^1$ possibly occurring with multiplicities.
\end{enumerate}

\subsection{Ramified covering trick}

Ramified covering trick is used for eliminating 
multiple fibers of an algebraic fibration.

\begin{theorem}[{{\bf Ramified covering trick}; cf. \cite{Cha} and \cite[III, 3.2.1]{Miy2}}]\label{Ramified covering trick}
Let $f\,:\,X \,\longrightarrow\, C$ be a surjective morphism with an irreducible general fiber $F$, where $X$ is
a smooth irreducible algebraic surface and $C$ is a smooth irreducible quasi-projective curve. Let $p_1,\, \cdots,\, p_r$
be the points of $C$ such that $f^{\ast}(p_i)$ has total multiplicity $m_i\,>\,1$, and for all $p\,\in\, C\setminus
\{p_1,\, \cdots,\, p_r\}$ the total multiplicity of the fiber $f^{\ast}(p)$ is $1$. If $C \,\cong \,
\PP^1$, assume that either $r\, \geq\, 3$ or $r\,=\, 2$ with $m_1\,=\, m_2$.
Then there is a finite ramified Galois covering $\pi \,:\, D \,\longrightarrow\, C$ which is branched precisely over
$p_1, \,\cdots,\, p_r$ with the ramification index being $m_i$ over each $p_i$. Then the commutative
diagram
$$
\begin{tikzcd}[row sep=large, column sep=large]
\overline{X \times_{C} D} \arrow[r, "p_1"] \arrow[d, "p_2"'] &X \arrow[d, "f"]\\
D \arrow[r, "\pi"'] & C
\end{tikzcd}
$$
where $\overline{X \times_{C} D}$ is the normalized fiber product, has the following properties:
\begin{enumerate}
\item $\overline{X \times_{C} D}$ is smooth,

\item $p_1 \,:\, \overline{X \times_{C} D} \,\longrightarrow\, X$ is a finite Galois {\'e}tale covering of $X$, and

\item $p_2 \,: \,\overline{X \times_{C} D} \,\longrightarrow\, D$ does not have any multiple fiber.
\end{enumerate}
\end{theorem}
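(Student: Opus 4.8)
The plan is to prove the statement in two stages: first construct the ramified Galois cover $\pi\colon D\to C$ realizing the prescribed branch data, and then analyze the normalized fiber product $\ol{X\times_C D}$ by a purely local computation over each $p_i$.

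For the first stage I would reduce the construction of $\pi$ to the existence of a suitable finite quotient of an orbifold fundamental group. Let $\ol C$ be the smooth projective completion of $C$, of genus $g$, and set
$$\Gamma\,=\,\Big\langle a_1,b_1,\dots,a_g,b_g,\gamma_1,\dots,\gamma_r \ \Big|\ \gamma_i^{m_i}=1\ (1\le i\le r),\ \prod_{j=1}^{g}[a_j,b_j]\prod_{i=1}^{r}\gamma_i=1\Big\rangle,$$
the orbifold group $\pi_1^{\mathrm{orb}}(\ol C;(p_i,m_i))$, where $\gamma_i$ is a small loop around $p_i$. A Galois cover of $\ol C$ branched exactly over the $p_i$ with ramification index exactly $m_i$ corresponds to a finite quotient $\rho\colon\Gamma\to G$ with $\rho(\gamma_i)$ of order \emph{exactly} $m_i$; deleting the points over $\ol C\setminus C$ then yields $\pi\colon D\to C$. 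By the classification of closed $2$-orbifolds, such a $\rho$ exists precisely when the orbifold is \emph{good} (developable), the only bad ones being the teardrop ($g=0$, $r=1$) and the unbalanced football ($g=0$, $r=2$, $m_1\neq m_2$). This is exactly what the hypotheses forbid in the $\PP^1$ case ($r\ge3$, or $r=2$ with $m_1=m_2$), whereas every $g\ge1$ orbifold is good because its orbifold Euler characteristic is non-positive; in the balanced football case one simply takes $D\cong\PP^1$ and $\pi(z)=z^{m_1}$.

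For the second stage I would work analytic-locally on $X$ near an arbitrary point of $f^{-1}(p_i)$. The decisive arithmetic input is that the ramification index $m_i=\mult(f^{\ast}(p_i))=\gcd_j m_j$ divides \emph{every} local multiplicity $m_j$ of the fiber. Hence, for a coordinate $t$ on $C$ at $p_i$, one can write $f^{\ast}t=\psi^{m_i}$ locally for a holomorphic $\psi$ on $X$ (absorbing the non-vanishing unit into a local $m_i$-th root, which exists because $X$ is smooth). With $s$ a coordinate on $D$ over $p_i$, so $\pi^{\ast}t=s^{m_i}$, the fiber product is cut out near the point by $\psi^{m_i}=s^{m_i}$, and the identity
$$\psi^{m_i}-s^{m_i}\,=\,\prod_{\zeta^{m_i}=1}(\psi-\zeta s)$$
exhibits it as a union of $m_i$ branches $\{\psi=\zeta s\}$. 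Each branch is the graph $s=\zeta^{-1}\psi$ over the smooth $X$, hence smooth, so the normalization $\ol{X\times_C D}$ merely separates the branches and is smooth; on each branch $p_1$ is a local isomorphism onto $X$, so $p_1$ is \emph{\'etale}; and since $\psi=(\text{unit})\prod_j g_j^{m_j/m_i}$ (where $g_j=0$ cuts out the $j$-th reduced component), the fiber of $p_2$ there equals $\sum_j (m_j/m_i)(F_j)_{\mathrm{red}}$, whose total multiplicity is $\gcd_j(m_j/m_i)=1$, giving property (3). Over $C\setminus\{p_1,\dots,p_r\}$ there is nothing to check, as $\pi$ is \'etale and $X$ is smooth. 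Finally, the $G$-action on $D$ lifts to $\ol{X\times_C D}$ commuting with $p_1$ and covering the trivial action on $X$, so $p_1$ is Galois \'etale with group $G$.

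The step I expect to be the genuine obstacle is the first one: producing a finite quotient $\rho$ in which each local monodromy $\gamma_i$ has order \emph{exactly} $m_i$ rather than a proper divisor, especially in the $g=0$, $r\ge3$ case, where one must invoke the developability of spherical, Euclidean and hyperbolic triangle/polygon orbifolds. By contrast the second stage collapses to the single factorization identity above, so there the work is only the bookkeeping needed to verify smoothness, the \'etale property, and the disappearance of multiple fibers simultaneously, and to check that these local conclusions glue to the asserted global statement.
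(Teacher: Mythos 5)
The paper gives no proof of this theorem at all: it is imported from the literature, with \cite{Cha} (Chau's note on Fox's proof of Fenchel's conjecture) supplying the branched cover $\pi$ and \cite[III, 3.2.1]{Miy2} the analysis of the normalized fiber product. So your proposal can only be measured against those standard arguments, and your second stage does match them: the identity $\psi^{m_i}-s^{m_i}=\prod_{\zeta^{m_i}=1}(\psi-\zeta s)$, available precisely because the total multiplicity $m_i=\gcd_j m_j$ divides every component multiplicity $m_j$, is exactly the local computation that gives smoothness of the normalization (the sheets are graphs $s=\zeta^{-1}\psi$), \'etaleness of $p_1$, and fibers of $p_2$ of the form $\sum_j (m_j/m_i)(F_j)_{\mathrm{red}}$ with $\gcd_j(m_j/m_i)=1$. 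The remaining loose ends there (multiplicity bookkeeping over the unbranched locus, transitivity of the lifted $G$-action on $p_1$-fibers to get the Galois property) are routine.

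Your first stage, however, has a genuine gap. You reduce the existence of $\pi$ to a finite quotient of the \emph{closed} orbifold group $\Gamma=\pi_1^{\mathrm{orb}}(\ol C;(p_i,m_i))$, i.e.\ you demand a cover of the projective completion $\ol C$ branched \emph{only} over the $p_i$, and you assert that the bad orbifolds (teardrop, unbalanced football) are ``exactly what the hypotheses forbid.'' They are not: the hypothesis $r\ge 3$, or $r=2$ with $m_1=m_2$, is imposed only when $C\cong\PP^1$. The theorem explicitly allows, say, $C\cong\A^1$ with $r=1$ --- the very situation the paper uses it in, when eliminating multiple fibers of fibrations over affine base curves --- and there your $\Gamma$ is the teardrop group, which is \emph{trivial} (the long relation forces $\gamma_1=1$), so no quotient contains an element of order $m_1$ and your construction produces nothing, even though $z\mapsto z^{m_1}$ solves the problem. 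The repair is standard but must be said: branching over $\ol C\setminus C$ is harmless since the statement concerns only $C$, and for non-projective $C$ the puncture loops turn $\pi_1^{\mathrm{orb}}(C;(p_i,m_i))$ into a free product $F\ast\Z/m_1\Z\ast\cdots\ast\Z/m_r\Z$, which surjects onto $\Z/\mathrm{lcm}(m_1,\dots,m_r)\Z$ sending $\gamma_i$ to the class of $\mathrm{lcm}/m_i$, of exact order $m_i$; the closed-orbifold analysis is needed only for $C\cong\PP^1$, where the hypotheses do exclude the bad cases. Finally, even in the good closed case your argument is incomplete as written: developability gives an \emph{infinite} Euclidean or hyperbolic group $\Gamma$ in which the $\gamma_i$ have exact order $m_i$, and one still needs a \emph{finite} quotient preserving those orders, i.e.\ a torsion-free normal subgroup of finite index (Bundgaard--Nielsen/Fox, or Selberg's lemma applied to these finitely generated linear groups). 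You correctly flagged this as the crux, but ``invoke developability'' does not by itself supply it; that finiteness step is precisely the content of the paper's reference \cite{Cha}.
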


\subsection{Nori's lemma and its generalization}

M. V. Nori proved, in \cite{Nor}, an important result on a short exact sequence involving fundamental groups of smooth algebraic varieties.

\begin{theorem}[{{\bf Nori's lemma}; \cite[Lemma 1.5]{Nor}}]\label{Nori's Lemma}
Let $X$ and $Y$ be smooth connected complex algebraic varieties and $f\,:\, X
\,\longrightarrow\, Y$ a dominant morphism with a connected general fiber $F$. Assume that there is a closed subset
$S\, \subset\, Y$, of codimension at least two, 
outside which all the fibers of $f$ have at least one smooth point (i.e., $f^{-1}(p)$ is generically reduced on at least
one irreducible component of $f^{-1}(p)$ for all $p \,\in\, Y\setminus S$). 
Then the natural sequence of fundamental groups $$\pi_1(F)\,\longrightarrow\, \pi_1(X) \,\longrightarrow\,\pi_1(Y)\,\longrightarrow\, (1)$$
is exact.
\end{theorem}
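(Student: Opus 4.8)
The plan is to combine the homotopy exact sequence of a topological fibration over a Zariski-dense open subset of $Y$ with a covering-space descent argument, using the two codimension hypotheses to extend the relevant covering across the locus where $f$ fails to be a fibration. To set this up I would choose the basepoint $y_0\,\in\, Y$ so that $F\,=\,f^{-1}(y_0)$ is the smooth connected general fiber, together with a basepoint $x_0\,\in\, F$. Since we work over $\C$, generic smoothness together with Ehresmann's fibration theorem furnishes a Zariski-open dense $U\,\subseteq\, Y$, with $y_0\,\in\, U$, such that $f_U\,:=\,f|_{f^{-1}(U)}\,:\,f^{-1}(U)\,\longrightarrow\, U$ is a locally trivial $C^{\infty}$ fiber bundle with fiber $F$. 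Its homotopy exact sequence then yields the exact sequence
$$\pi_1(F)\,\longrightarrow\,\pi_1(f^{-1}(U))\,\longrightarrow\,\pi_1(U)\,\longrightarrow\,\pi_0(F)\,=\,1,$$
the last term vanishing because $F$ is connected.

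Surjectivity of $\pi_1(X)\,\longrightarrow\,\pi_1(Y)$ and the easy inclusion come next. I would invoke the elementary fact that removing a closed complex-analytic subset of positive codimension from a smooth connected variety induces a surjection on $\pi_1$ (a generic loop and a generic null-homotopy miss a set of real codimension $\geq 2$). Applied to the inclusions $f^{-1}(U)\,\hookrightarrow\, X$ and $U\,\hookrightarrow\, Y$, the restriction maps $\pi_1(f^{-1}(U))\,\to\,\pi_1(X)$ and $\pi_1(U)\,\to\,\pi_1(Y)$ are both surjective; together with the surjection $\pi_1(f^{-1}(U))\,\to\,\pi_1(U)$ above and the commuting square, this yields surjectivity of $q\,:\,\pi_1(X)\,\to\,\pi_1(Y)$. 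The inclusion $\Ima(\pi_1(F)\to\pi_1(X))\,\subseteq\,\ker q$ is immediate, since $f$ collapses $F$ to the point $y_0$.

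The heart of the matter is the reverse inclusion $\ker q\,\subseteq\,\Ima(\pi_1(F)\to\pi_1(X))$, which I would establish by covering-space descent. Let $H\,\leq\,\pi_1(X)$ be the image of $\pi_1(F)$, and let $\widetilde X\,\to\, X$ be the connected covering classified by $H$. Because the image of $\pi_1(F)\to\pi_1(f^{-1}(U))$ is exactly the kernel of $\pi_1(f^{-1}(U))\to\pi_1(U)$ and lands inside the subgroup classifying $\widetilde X|_{f^{-1}(U)}$, this restricted covering is trivial along each fiber $F$; hence, by the exactness of the fibration sequence, it is pulled back from a covering $\widetilde U\,\to\, U$. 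If $\widetilde U\,\to\, U$ extends to a covering $\widetilde Y\,\to\, Y$ whose pullback to $X$ recovers $\widetilde X$, then $\widetilde X$ is classified by $q^{-1}(K)$ for the subgroup $K\,\leq\,\pi_1(Y)$ classifying $\widetilde Y$; thus $H\,=\,q^{-1}(K)\,\supseteq\,\ker q$, which combined with $H\,\subseteq\,\ker q$ forces $H\,=\,\ker q$ and completes the proof.

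The main obstacle is precisely this extension of $\widetilde U\,\to\, U$ across $Y\setminus U$, and it is where the hypotheses are used. Over the codimension $\geq 2$ set $S$ the extension is automatic by Zariski--Nagata purity (removing a set of real codimension $\geq 4$ does not change $\pi_1$). The delicate part is the codimension-one components of the discriminant $(Y\setminus U)\setminus S$, across which the covering could a priori acquire nontrivial meridian monodromy. Here the smooth-point hypothesis is essential: for a general point $p$ of such a component one has $p\,\notin\, S$, so $f^{-1}(p)$ is generically reduced on some component and $f$ is a submersion at a smooth point $x$ lying over $p$; this gives a local analytic product structure over a small disc $\Delta\,\subseteq\, Y$ transverse to the discriminant, through which the meridian loop around the discriminant lifts. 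The associated deck transformation is therefore trivial, so $\widetilde U\,\to\, U$ is unramified along the codimension-one locus and extends across it. Combining the two extension steps produces the desired covering $\widetilde Y\,\to\, Y$ and finishes the argument.
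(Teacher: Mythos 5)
One preliminary remark: the paper does not prove this statement at all --- it is quoted verbatim from Nori \cite{Nor} (Lemma 1.5) as a known input --- so the only meaningful comparison is with Nori's original argument, and your proposal essentially reconstructs it: generic topological triviality over a dense open $U$, descent of the covering of $X$ attached to $H\,:=\,\Ima\bigl(\pi_1(F)\,\to\,\pi_1(X)\bigr)$ to a covering of $U$, killing of meridian monodromy around codimension-one discriminant components via local sections through smooth fiber points, and purity to cross the codimension $\geq 2$ locus. Your group-theoretic bookkeeping is sound: $H$ is normal because it is the image of $\ker\bigl((f_U)_*\bigr)$ under the surjection $\rho\,:\,\pi_1(f^{-1}(U))\,\to\,\pi_1(X)$, and the identity $H\,=\,q^{-1}(K)$ does follow automatically from $K_U\,=\,(f_U)_*\bigl(\rho^{-1}(H)\bigr)$ together with $K_U\,=\,r^{-1}(K)$, so the conditional phrasing ``whose pullback to $X$ recovers $\widetilde X$'' is not an extra hypothesis but a verifiable consequence you should spell out.

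Two steps need repair before this is a proof. First, Ehresmann's fibration theorem requires $f$ to be proper, which a dominant morphism of varieties typically is not (in this paper's applications the fibers are $\C^*$, affine lines, crosses); as written, your very first step fails. The statement you actually need --- every morphism of complex varieties is a topologically locally trivial fibration over some Zariski-dense open subset of the target --- is true but is Verdier's generic fibration theorem (via Whitney stratifications of a relative compactification and the Thom--Mather isotopy lemma), not Ehresmann. Second, the meridian step is compressed at exactly the crucial point: merely lifting the meridian $\gamma$ through the local section $\sigma$ proves nothing; what makes the monodromy trivial is that $\sigma\circ\gamma$ bounds the disc $\sigma(\Delta)$ \emph{inside $X$} --- the null-homotopy passes through the singular fiber --- hence $[\sigma\circ\gamma]\,=\,1$ in $\pi_1(X)$, so it lies in $H$, so $\gamma\,\in\, K_U$. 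Relatedly, the claim that $f$ is a submersion at a smooth point of the fiber needs the observation that over a \emph{general} point $p$ of a codimension-one discriminant component $D_i$ every component of $f^{-1}(p)$ has the expected dimension $\dim X-\dim Y$ (it is a general fiber of $f^{-1}(D_i)\,\to\, D_i$, and $f^{-1}(D_i)$ is locally cut out by one equation); a smooth point on a generically reduced component of excess dimension would not give surjectivity of $df$. This is also precisely where the codimension $\geq 2$ hypothesis on $S$ enters: it guarantees such general points $p\,\notin\, S$ exist on every divisorial component. With these two points patched, your argument is complete and agrees in substance with Nori's.
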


See also \cite[Remark 3.13, 3.14]{GGH}, for the two important remarks related to the above lemma. These are used in the latter proofs.

The following result due to Xiao Gang is a useful generalization of Theorem \ref{Nori's Lemma}.

\begin{lemma}[{\cite[Lemma 2]{Gan}}]\label{Lem: Gang's Generalization}
Let $f \,:\, X\, \longrightarrow\, C$ be a surjective morphism from a smooth algebraic surface to a smooth algebraic curve
such that a general fiber $F$ of $f$ is smooth and irreducible. Let $\{p_1,\,\cdots,\,p_r\}\, \subset\, C$ be the image of the
multiple fibers of $f$ with the total multiplicity of the fiber over $p_i$ being $m_i\, >\, 1$, $1\, \leq\, i\, \leq\, r$.
Let $\wt{C}$ be the smooth completion of $C$. Let $\wt{C}\setminus C\,=\{p_{r+1},\,\cdots,\, p_{r+\ell}\}$ and
$g\, =\, {\rm genus}(\wt{C})$. Then there is an exact sequence
$$\pi_1(F)\, \longrightarrow\, \pi_1(X ) \, \longrightarrow\, \Gamma \, \longrightarrow\, (1),$$
where $\Gamma$ is the group with generators $$\alpha_1,\, \beta_1, \,\cdots, \,\alpha_g,\, \beta_g,\, \gamma_1,\,\cdots,
\, \gamma_r,\,\gamma_{r+1},\, \cdots,\, \gamma_{r+\ell}$$ and relations
$$[\alpha_1,\, \beta_1]\cdots [\alpha_g, \,\beta_g] \gamma_1\cdots \gamma_r \cdot \gamma_{r +1}\cdots \gamma_{r +\ell}
\,=\, 1 \,= \,\gamma_1^{m_1}\,=\, \cdots \,=\, \gamma_r^{m_r};$$
here $[a, \,b]\,=\, aba^{-1}b^{-1}$. If there are no multiple fibers in the $F$-fibration $f$, then
$\Gamma$ is the fundamental group of $C$.
\end{lemma}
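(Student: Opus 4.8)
The plan is to deduce the sequence from Nori's lemma (Theorem \ref{Nori's Lemma}) over the open curve where $f$ has no multiple fibers, and then to reinstate the multiple fibers one at a time, each of which imposes exactly the relation $\gamma_i^{m_i}=1$. First I would set $C^{\circ}:=C\setminus\{p_1,\dots,p_r\}$ and $X^{\circ}:=f^{-1}(C^{\circ})$. Over $C^{\circ}$ every fiber of $f$ is non-multiple, i.e. the greatest common divisor of the multiplicities of its components equals $1$, so at least one component occurs with multiplicity one and the fiber is generically reduced along it. Hence the hypothesis of Theorem \ref{Nori's Lemma} is met with the bad locus $S=\emptyset$, and it yields an exact sequence
$$
\pi_1(F)\,\longrightarrow\,\pi_1(X^{\circ})\,\longrightarrow\,\pi_1(C^{\circ})\,\longrightarrow\,(1).
$$
Since $C^{\circ}=\wt{C}\setminus\{p_1,\dots,p_{r+\ell}\}$ is a smooth curve of genus $g$ with $r+\ell$ punctures, its fundamental group has the presentation with generators $\alpha_1,\beta_1,\dots,\alpha_g,\beta_g,\gamma_1,\dots,\gamma_{r+\ell}$, the $\gamma_k$ being meridians around the punctures, subject to the single relation $[\alpha_1,\beta_1]\cdots[\alpha_g,\beta_g]\gamma_1\cdots\gamma_{r+\ell}=1$.

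Next I would pass from $X^{\circ}$ back to $X=f^{-1}(C)$ by adjoining the multiple fibers $f^{-1}(p_1),\dots,f^{-1}(p_r)$. Each of these is a curve, hence of real codimension two in the complex manifold $X$, so the inclusion $X^{\circ}\hookrightarrow X$ induces a surjection $\pi_1(X^{\circ})\twoheadrightarrow\pi_1(X)$ whose kernel I would compute with the Seifert--van Kampen theorem, covering $X$ by $X^{\circ}$ and small tubular neighbourhoods $V_i$ of $f^{-1}(p_i)$, with $X^{\circ}\cap V_i=f^{-1}(\Delta_i^{*})$ for a small punctured disc $\Delta_i^{*}$ around $p_i$. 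Over $\Delta_i^{*}$ the map $f$ is a topological fiber bundle, so the meridian $\gamma_i$ lifts to $\pi_1(f^{-1}(\Delta_i^{*}))$, and the whole problem localises to identifying the kernel of $\pi_1(f^{-1}(\Delta_i^{*}))\to\pi_1(V_i)$. Here the multiplicities enter: near a component $D_{ij}$ of $f^{-1}(p_i)$ occurring with multiplicity $a_{ij}$ the morphism $f$ has, on a transverse slice, the local form $x\mapsto x^{a_{ij}}$, so $\gamma_i$ lifts to a $1/a_{ij}$-turn about $D_{ij}$ and $\gamma_i^{a_{ij}}$ equals the meridian of $D_{ij}$, which bounds a transverse disc and therefore dies once $D_{ij}$ is filled in. Running this over all components and using B\'ezout shows $\gamma_i^{m_i}=1$ in $\pi_1(V_i)$ with $m_i=\gcd_j a_{ij}=\mult(f^{-1}(p_i))$, and the van Kampen pushout then gives $\pi_1(X)=\pi_1(X^{\circ})/\langle\langle\gamma_1^{m_1},\dots,\gamma_r^{m_r}\rangle\rangle$.

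Finally I would assemble these into the commutative ladder
$$
\begin{array}{ccccccc}
\pi_1(F) & \longrightarrow & \pi_1(X^{\circ}) & \longrightarrow & \pi_1(C^{\circ}) & \longrightarrow & (1)\\
\big\| & & \big\downarrow & & \big\downarrow & & \\
\pi_1(F) & \longrightarrow & \pi_1(X) & \longrightarrow & \Gamma & \longrightarrow & (1),
\end{array}
$$
whose middle and right vertical arrows are the quotients by the normal closures of $\{\gamma_i^{m_i}\}_{i=1}^{r}$ in $\pi_1(X^{\circ})$ and $\pi_1(C^{\circ})$, respectively; by definition the latter quotient is the presented group $\Gamma$ of the statement. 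The top row is exact by Step 1, and a short diagram chase gives exactness of the bottom row: writing $K$ for the normal closure of $\{\gamma_i^{m_i}\}$ in $\pi_1(X^{\circ})$, the kernel of the composite $\pi_1(X^{\circ})\to\pi_1(C^{\circ})\to\Gamma$ equals $\Ima(\pi_1(F))\cdot K$, so modulo $K$ the kernel of $\pi_1(X)\to\Gamma$ is exactly the image of $\pi_1(F)$. When $r=0$ no power relation is imposed and $\Gamma=\pi_1(C^{\circ})=\pi_1(C)$, which is the final assertion of the lemma.

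I expect the main obstacle to be the local analysis at the multiple fibers, namely proving the \emph{equality} $\ker\big(\pi_1(f^{-1}(\Delta_i^{*}))\to\pi_1(V_i)\big)=\langle\langle\gamma_i^{m_i}\rangle\rangle$ rather than merely the inclusion $\gamma_i^{m_i}\in\ker$: one must check that filling in the fiber introduces no relation beyond this normal closure, so that $\Gamma$ is not a proper further quotient. This is delicate because $f^{-1}(p_i)$ may be reducible and singular, forcing the transverse-slice computation to be carried out component by component and then patched, and because one must verify that the van Kampen gluing along $f^{-1}(\Delta_i^{*})$ contributes no extra generators.
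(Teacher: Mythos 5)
The paper does not actually prove this lemma --- it is quoted verbatim from Xiao Gang \cite{Gan} --- so your proposal must stand on its own, and it has two genuine gaps. The first is in Step 1: you justify applying Theorem \ref{Nori's Lemma} over $C^{\circ}$ by asserting that a non-multiple fiber (one whose components' multiplicities have greatest common divisor $1$) must have a component of multiplicity one. That implication is false: a fiber of the form $2A_1+3A_2$ has total multiplicity $\gcd(2,3)=1$ yet no reduced component, so the scheme-theoretic fiber has no smooth point and Nori's hypothesis fails (and since the base is a curve, the bad locus $S$ is forced to be empty, so no fiber may be exempted). This is not a pathological corner case in the present paper: the singular fibers of $\C^*$-fibrations recalled in Section \ref{se2} include exactly such crosses $m_1A_1\cup m_2A_2$ with $\gcd(m_1,m_2)=1$ and both $m_i$ possibly greater than $1$. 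The repair is to delete from $C$ \emph{all} points whose fibers lack a reduced component, and to treat those extra points by the same filling-in analysis as the multiple fibers, with $m_i=1$; the relation then imposed kills the corresponding meridian modulo the image of $\pi_1(F)$, so the resulting group $\Gamma$ is unchanged.

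The second gap is the local statement you yourself flag as the main obstacle, and it is not merely delicate --- it is false. The equality $\ker\bigl(\pi_1(f^{-1}(\Delta_i^{*}))\to\pi_1(V_i)\bigr)=\langle\langle\wt{\gamma}_i^{\,m_i}\rangle\rangle$, equivalently your assertion that $\gamma_i^{m_i}=1$ in $\pi_1(V_i)$, fails for instance at a multiple fiber of an elliptic fibration: in the local model $V=(\Delta\times E)/(\Z/m\Z)$ with action $(t,e)\mapsto(\zeta t,\,e+e_0)$, $e_0$ of order $m$, the space $V$ retracts onto the reduced central fiber $E/\langle e_0\rangle$ and the lifted meridian $\wt{\gamma}$ satisfies that $\wt{\gamma}^{\,m}$ is a \emph{nontrivial} element of $\pi_1(V)$, lying in the image of $\pi_1(F)$. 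What is true, and suffices, is weaker: the kernel of the surjection $\pi_1(X^{\circ})\to\pi_1(X)$ is normally generated by meridians $\mu_{ij}$ of the irreducible components $D_{ij}$ of the removed fibers, with $f_{*}\mu_{ij}=\gamma_i^{a_{ij}}$ where $a_{ij}=\mult$ of $D_{ij}$. Since $m_i\mid a_{ij}$, each $\mu_{ij}$ dies in $\Gamma$, so $\pi_1(X)\to\Gamma$ is well defined and surjective; and choosing $c_j$ with $\sum_j c_j a_{ij}=m_i$ (B\'ezout), the element $\wt{\gamma}_i^{\,m_i}\cdot\bigl(\prod_j\mu_{ij}^{c_j}\bigr)^{-1}$ maps to $1$ in $\pi_1(C^{\circ})$ and hence, by the Nori step, lies in $\Ima\bigl(\pi_1(F)\bigr)$; thus $\wt{\gamma}_i^{\,m_i}\in\Ima(\pi_1(F))\cdot\langle\langle\mu_{ij}\rangle\rangle$. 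Feeding these two containments into your commutative ladder gives exactness of the bottom row at $\pi_1(X)$ by precisely your diagram chase. The stronger identification $\pi_1(X)=\pi_1(X^{\circ})/\langle\langle\wt{\gamma}_i^{\,m_i}\rangle\rangle$ is neither available nor needed: the discrepancy is absorbed by the image of $\pi_1(F)$, which is exactly why the lemma is stated as a three-term exact sequence rather than as a presentation of $\pi_1(X)$.
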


\subsection{Some more well-known topological results}

Here we will mention some results which will be used in our later proofs.

\begin{theorem}[Hurewicz Theorem]\label{Hurewicz Theorem}
Let $X$ be a connected topological space with $\pi_i(X) \,=\, 0$ for all $0\,\leq\,i \,<\, n$, where
$n \,\geq\, 2$ is an integer. Then $\widetilde{H}_i(X;\,\Z) = 0$ for all $0\,\leq\,i \,< \,n$ and the Hurewicz homomorphism
$$h_n\,:\,\pi_n(X) \,\longrightarrow \, H_n(X;\,\Z)$$ is an isomorphism.
Moreover, if $X$ is a CW complex, then the Hurewicz homomorphism $$h_{n+1} \,: \,\pi_{n+1}(X)\,\longrightarrow\,H_{n+1}(X;\,\Z)$$ is also surjective.
\end{theorem}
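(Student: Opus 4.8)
The plan is to reduce the statement to the combinatorial setting of CW complexes and then read it off from a minimal cell structure. First, since a weak homotopy equivalence induces isomorphisms on all homotopy and all singular homology groups, I would invoke CW approximation \cite{Hat} to replace $X$ by a homotopy-equivalent CW complex; for the final (``moreover'') clause $X$ is already assumed to be a CW complex. Because $X$ is $(n-1)$-connected with $n\,\geq\,2$, it is in particular simply connected, so it is homotopy equivalent to a CW complex $Y$ having a single $0$-cell and no cells in dimensions $1,\,\cdots,\,n-1$ (the standard cancellation of superfluous low-dimensional cells, cf. \cite{Hat}). With such a minimal model the cellular chain complex satisfies $C_i(Y)\,=\,0$ for $1\,\leq\,i\,\leq\, n-1$, whence $\widetilde{H}_i(X)\,=\,\widetilde{H}_i(Y)\,=\,0$ for all $i\,<\,n$, which is the first assertion.

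For the isomorphism in degree $n$, I would compare the standard presentations of $\pi_n(Y)$ and $H_n(Y)$. The $n$-skeleton $Y^{(n)}$ is a wedge of $n$-spheres $\bigvee_\alpha S^n_\alpha$, for which the Hurewicz map is an isomorphism: both $\pi_n\big(\bigvee_\alpha S^n_\alpha\big)$ and $H_n\big(\bigvee_\alpha S^n_\alpha\big)$ are free abelian on the classes of the spheres, using $\pi_n(S^n)\,\cong\,\Z$ for $n\,\geq\,2$ and the fact that the relevant Whitehead products live in degrees $2n-1\,>\,n$. Cellular approximation gives $\pi_n(Y)\,=\,\pi_n(Y^{(n+1)})\,=\,\pi_n(Y^{(n)})/N$, where $N$ is the image of the boundary $\partial\,:\,\pi_{n+1}(Y^{(n+1)},\,Y^{(n)})\,\longrightarrow\,\pi_n(Y^{(n)})$ generated by the attaching maps of the $(n+1)$-cells, while on the homology side $H_n(Y)\,=\,C_n(Y)/\operatorname{im}\partial_{n+1}$ since $\partial_n\,=\,0$. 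Naturality of the Hurewicz map identifies these two sets of relations, so $h_n$ is an isomorphism.

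For the surjectivity of $h_{n+1}$ when $X$ is a CW complex, I would exploit naturality together with the skeletal filtration of $Y$. Cellular approximation shows that $\pi_{n+1}(Y^{(n+1)})\,\longrightarrow\,\pi_{n+1}(Y)$ is onto, since any map $S^{n+1}\,\longrightarrow\, Y$ is homotopic to a cellular one, and the cellular chain complex shows that $H_{n+1}(Y^{(n+1)})\,\longrightarrow\,H_{n+1}(Y)$ is onto, being the quotient by $\operatorname{im}\partial_{n+2}$. By the commuting square of Hurewicz maps it therefore suffices to prove surjectivity of $h_{n+1}$ for $Y^{(n+1)}$, a complex with cells only in dimensions $0$, $n$ and $n+1$; this last case follows from the relative Hurewicz theorem applied to the pair $(Y^{(n+1)},\,Y^{(n)})$ together with the long exact sequences in homotopy and homology.

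I expect the genuine content to be concentrated in two classical inputs: that $\pi_n(S^n)\,\cong\,\Z$ is generated by the identity map (equivalently, that the Hurewicz map is an isomorphism on spheres), proved via degree theory, and the relative Hurewicz theorem for highly connected pairs, proved by the compression and induction argument on the cellular structure of the pair. These constitute the technical heart of the argument, after which the reduction steps above are formal. In an expository treatment one simply records the statement as in \cite[\S~4.2]{Hat} or \cite[Chapter~7]{Spa}.
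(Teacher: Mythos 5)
Your proposal is correct, but note that the paper itself offers no proof of this statement: it is recorded as a classical result with pointers to \cite[Chapter 8, Theorem 3]{Mos-Tan} and \cite[Exercise 23, Section 4.2]{Hat}, so there is no in-paper argument to compare against. What you have written is essentially the standard Hatcher-style proof underlying those references: CW approximation plus an $(n-1)$-connected minimal cell model, the computation of $\pi_n$ of a wedge of $n$-spheres, the quotient presentation $\pi_n(Y)\,\cong\,\pi_n(Y^{(n)})/\operatorname{im}\partial$ matched against $C_n(Y)/\operatorname{im}\partial_{n+1}$, and, for the ``moreover'' clause, exactly the diagram chase (relative Hurewicz for $(Y^{(n+1)},\,Y^{(n)})$ together with the two long exact sequences and $H_{n+1}(Y^{(n)})\,=\,0$) that solves the Hatcher exercise the paper cites; I verified that chase and it goes through. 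Three small points of hygiene. First, for a general connected space, CW approximation yields only a \emph{weak} homotopy equivalence, not a homotopy equivalence; your opening clause shows you know weak equivalence suffices (it induces isomorphisms on $\pi_\ast$ and $H_\ast$, and the Hurewicz map is natural), but the phrase ``homotopy-equivalent CW complex'' should be corrected accordingly. Second, for an infinite wedge of $n$-spheres the freeness of $\pi_n$ requires the usual compactness/direct-limit reduction to finite subwedges before the Whitehead-product argument applies. Third, the identification of $\operatorname{im}\bigl(\partial\,:\,\pi_{n+1}(Y^{(n+1)},\,Y^{(n)})\to\pi_n(Y^{(n)})\bigr)$ with the subgroup generated by the attaching classes rests on homotopy excision (Blakers--Massey) to see that $\pi_{n+1}(Y^{(n+1)},\,Y^{(n)})$ is generated by the characteristic maps of the $(n+1)$-cells (using $n\,\geq\,2$ and simple connectivity); this is legitimately covered by the classical inputs you flag at the end, and using the relative Hurewicz theorem for the surjectivity clause is not circular since that theorem is established independently via excision. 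With these glosses, your sketch is a complete and accurate account of the intended proof.
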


For the above theorem, we refer to \cite[Chapter 8, Theorem 3]{Mos-Tan} and \cite[Exercise 23, Section 4.2]{Hat}.

\begin{theorem}[Rational Hurewicz Theorem]\label{Rational Hurewicz Theorem}
Let $X$ be a simply-connected topological space with $\pi_i(X) \otimes_{\Z}\Q\,=\, 0$ for all $2\,\leq\,i \,<\, n$. Then the Hurewicz homomorphism $h_i$ (as in Theorem \ref{Hurewicz Theorem}) induces an isomorphism
$$h_i\otimes \id_{\Q}\, :\, \pi_i(X) \otimes_{\Z}\Q \,\longrightarrow\, H_i (X;\, \Q)$$
for all $1 \,\leq\, i \,<\, 2n-1$ and a surjection for $i\, =\, 2n- 1$.
\end{theorem}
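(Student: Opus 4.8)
The plan is to reduce to the ordinary Hurewicz theorem (Theorem \ref{Hurewicz Theorem}) by rationalizing, and then to enlarge the isomorphism range all the way up to degree $2n-1$ by an inductive Serre-spectral-sequence argument. First I would rationalize $X$. Since $X$ is simply connected it admits a rationalization $\ell\colon X \to X_\Q$ with $\pi_i(X_\Q) \cong \pi_i(X)\otimes_\Z \Q$ and $H_i(X_\Q;\,\Z) \cong H_i(X;\,\Q)$, and, by naturality of the Hurewicz homomorphism, $\ell$ carries the rationalized Hurewicz map $h_i\otimes\id_\Q$ of $X$ to the ordinary integral Hurewicz map of $X_\Q$. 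Under this translation the hypothesis ``$\pi_i(X)\otimes_\Z\Q = 0$ for $2\le i<n$'' becomes the statement that $X_\Q$ is $(n-1)$-connected. Hence it suffices to prove the following: if $Z$ is a simply connected rational space whose first nonvanishing homotopy group sits in degree $m\ge 2$, then $h_i\colon \pi_i(Z)\to H_i(Z;\,\Z)$ is an isomorphism for $i<2m-1$ and a surjection for $i=2m-1$.

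The engine is the homotopy fibration $F \to Z \to K(\pi_m(Z),\,m)$ coming from the first Postnikov map. Here $F$ is $m$-connected and the long exact homotopy sequence gives $\pi_i(F)\cong\pi_i(Z)$ for all $i>m$; moreover $F$ is again rational. The crucial input is the gap in the rational homology of the base: because $\pi_m(Z)$ is a $\Q$-vector space, $\widetilde H_i(K(\pi_m(Z),\,m);\,\Q)$ vanishes for $0<i<m$ and for $m<i<2m$, equals $\pi_m(Z)$ in degree $m$, and becomes nonzero again only from degree $2m$ onward (the polynomial, resp. exterior, algebra structure). Feeding this, together with $H_i(F;\,\Q)=0$ for $0<i\le m$, into the rational Serre spectral sequence $E^2_{p,q}=H_p(K(\pi_m(Z),\,m);\,H_q(F;\,\Q))$, a direct inspection shows that in total degree $i\le 2m-2$ the only possibly nonzero entries lie on the base row $q=0$ and the fiber column $p=0$, and that no differential touches the fiber column; consequently the fiber-inclusion edge homomorphism $H_i(F;\,\Q)\to H_i(Z;\,\Q)$ is an isomorphism for $m<i\le 2m-2$. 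In total degree $2m-1$ the unique surviving term is a quotient of $H_{2m-1}(F;\,\Q)$, so the edge map is there only a surjection.

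I would then close by induction on the \emph{excess} $d:=i-m$. For $d=0$ the assertion is exactly the ordinary Hurewicz isomorphism of Theorem \ref{Hurewicz Theorem}. For $d\ge 1$, naturality of the Hurewicz homomorphism identifies $h_i^Z$ with the composite of $h_i^F$ and the edge map described above; since $F$ is $m$-connected, its excess in degree $i$ is at most $d-1$, so the inductive hypothesis yields that $h_i^F$ is an isomorphism, and transporting through the edge isomorphism (for $m<i\le 2m-2$), resp. surjection (for $i=2m-1$), gives the claim for $Z$. The induction stays within the class of rational spaces because the fiber of a map between rational spaces is rational.

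The main obstacle is precisely this extension of the isomorphism range beyond the classical degree $n+1$ up to $2n-1$: it is not formal. What makes it work is exactly the rational-homology gap of Eilenberg--MacLane spaces between degrees $m$ and $2m$, which simultaneously produces the numerical bound $2m-1$ and forces the excess $d$ to drop by one when passing from $Z$ to the more highly connected fiber $F$, so that the induction terminates. I would keep the spectral-sequence bookkeeping (vanishing strips and the single possibly nonzero differential $d_{2m}$ into the fiber column at total degree $2m-1$) as the one calculation to carry out in full. (Alternatively, one may bypass rationalization and run the identical spectral-sequence induction inside Serre's mod-$\mathcal{C}$ framework with $\mathcal{C}$ the class of torsion abelian groups, or obtain the statement in one stroke from the minimal Sullivan model of $X$.)
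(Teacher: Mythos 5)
You should first note that the paper does not actually prove this theorem: it is quoted as well known, with the proofs delegated to Dyer \cite{Dy} and Klaus--Kreck \cite{Kla-Kre}, so there is no internal argument to compare against. Your proposal is, however, a correct and essentially complete sketch of the standard proof, close in spirit to the Klaus--Kreck argument the paper cites. The rationalization step is exactly the right move: it converts the hypothesis $\pi_i(X)\otimes_\Z\Q=0$ for $2\le i<n$ into genuine $(n-1)$-connectivity of $X_\Q$, which is what allows the ordinary Hurewicz theorem (Theorem \ref{Hurewicz Theorem}) to serve as the base of your induction; a naive induction over integral connected covers would founder precisely because rational vanishing of homotopy groups does not yield integral connectivity, and your parenthetical mod-$\mathcal{C}$ alternative is the classical way around this. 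The spectral-sequence bookkeeping checks out: with $B=K(\pi_m(Z),m)$ for $\pi_m(Z)$ a $\Q$-vector space one has $\widetilde H_p(B;\Q)=0$ for $0<p<m$ and $m<p<2m$, while $H_q(F;\Q)=0$ for $0<q\le m$, so in total degree $i$ with $m<i\le 2m-2$ the only possibly nonzero $E^2$-entry is $E^2_{0,i}=H_i(F;\Q)$, untouched by differentials (the candidate source $E^r_{r,\,i-r+1}$ vanishes for $r=m$ since then $1\le i-m+1\le m$, and $r=2m$ forces total degree $i=2m-1$), giving the edge isomorphism $H_i(F;\Q)\to H_i(Z;\Q)$; at $i=2m-1$ the lone differential $d_{2m}\colon E^{2m}_{2m,0}\to E^{2m}_{0,2m-1}$ leaves a quotient, giving exactly the claimed surjection. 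The excess induction also terminates correctly: the fiber $F$ has its first nonzero homotopy group in some degree $m'\ge m+1$, and $i\le 2m-1\le 2m'-3$, so degree $i$ lies strictly inside the isomorphism range for $F$ and the inductive hypothesis (at strictly smaller, possibly negative, excess) applies. Only routine caveats are left implicit and you should record them: replace $X$ by a CW approximation before rationalizing and forming the Postnikov section; convert $Z\to K(\pi_m(Z),m)$ into a fibration before invoking the Serre spectral sequence; and justify the homology gap of $K(V,m)$ for a possibly infinite-dimensional $\Q$-vector space $V$ by writing $V$ as a filtered colimit of finite-dimensional subspaces. None of these affects correctness, and since the statement is used in the paper only as a citation (in the proofs of Theorem \ref{Thm: Bound of the rank of homotopy graded algebra for certain Stein spaces of dimension at least 4}), your self-contained argument is if anything more informative than the paper's treatment.
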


This is a well-known theorem in topology. We refer the reader to \cite[Corollary 3.4]{Dy} and \cite[Theorem 1.1]{Kla-Kre} for its proofs.

\begin{theorem}[{Whitehead's Theorem; \cite{Whi1}, \cite{Whi2}}]\label{Whitehead's Theorem}
If $f\,:\,X\,\longrightarrow\, Y$ is a map of CW complexes which induces isomorphisms on all homotopy groups, then $f$ is a homotopy equivalence.
\end{theorem}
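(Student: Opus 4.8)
The plan is to reduce the statement to a question about a CW pair and then argue one cell at a time. First I would replace the map $f\,:\,X\,\longrightarrow\, Y$ by an inclusion, using the mapping cylinder $M_f$ obtained from $(X\times[0,1])\sqcup Y$ by gluing each point $(x,1)$ to $f(x)$. After a preliminary cellular approximation of $f$, the cylinder $M_f$ is a CW complex containing $X$ (as $X\times\{0\}$) as a subcomplex, and the canonical projection $r\,:\,M_f\,\longrightarrow\, Y$ is a deformation retraction, hence a homotopy equivalence. Since $f$ factors as the composite of the inclusion $\iota\,:\,X\,\hookrightarrow\, M_f$ with $r$, the map $f$ is a homotopy equivalence if and only if $\iota$ is; moreover, as $r$ induces isomorphisms on every $\pi_n$, the hypothesis on $f$ forces $\iota$ to induce isomorphisms on all homotopy groups as well. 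Thus I am reduced to a CW pair $(Z,\,A)$ for which the inclusion $A\,\hookrightarrow\, Z$ is an isomorphism on $\pi_n$ for all $n$, and I must show that $A$ is a deformation retract of $Z$.

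Next I would feed these isomorphisms into the long exact sequence of homotopy groups of the pair,
\[
\cdots\,\longrightarrow\,\pi_n(A)\,\longrightarrow\,\pi_n(Z)\,\longrightarrow\,\pi_n(Z,\,A)\,\longrightarrow\,\pi_{n-1}(A)\,\longrightarrow\,\cdots.
\]
Because $\pi_n(A)\,\longrightarrow\,\pi_n(Z)$ is an isomorphism for every $n$, exactness yields $\pi_n(Z,\,A)\,=\,0$ for all $n$; this should be recorded at every basepoint of $A$, since $Z$ need not be simply connected. This vanishing is the form in which the hypothesis will be used.

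The heart of the argument is then the \emph{compression lemma}: if $(Z,\,A)$ is a CW pair with $\pi_n(Z,\,A)\,=\,0$ for each $n$ for which $Z$ has $n$-cells outside $A$, then the identity map of $(Z,\,A)$ is homotopic rel $A$ to a map $Z\,\longrightarrow\, A$, and such a homotopy is exactly a strong deformation retraction of $Z$ onto $A$. I would prove this by induction over the skeleta $Z^{(n)}\cup A$: assuming the map has already been deformed into $A$ over $Z^{(n-1)}\cup A$, each $n$-cell presents a map of $(D^n,\,\partial D^n)$ into $(Z,\,A)$ that is nullhomotopic rel boundary because $\pi_n(Z,\,A)\,=\,0$, which lets the cell be pushed into $A$; the cellwise homotopies are then assembled and extended over all of $Z$ by the homotopy extension property of the CW pair.

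I expect the main obstacle to be precisely this inductive compression step. The reductions of the first two paragraphs are formal, but making the cell-by-cell deformation rigorous requires organizing the individual nullhomotopies so that they glue to a homotopy defined simultaneously on all of $Z$, handling the basepoint and path-component bookkeeping in the non-simply-connected setting, and verifying that the resulting concatenation of homotopies over the increasing skeleta remains continuous in the limit. This continuity-across-skeleta point, together with the careful use of the homotopy extension property at each stage, is where the real work lies.
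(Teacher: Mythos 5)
The paper does not prove this theorem at all: it is quoted as a classical result with a bare citation to Whitehead's two 1949 papers, so there is no internal proof to compare against. Your proposal is the standard modern argument (essentially Hatcher's Theorem 4.5 together with the compression lemma, his Lemma 4.6), and it is correct as outlined. All the genuinely load-bearing steps are present and in the right order: the preliminary cellular approximation so that the mapping cylinder $M_f$ is a CW complex containing $X$ as a subcomplex (this is easy to forget and is necessary for the pair $(M_f,\,X)$ to have the homotopy extension property as a CW pair); the factorization $f\,=\,r\circ\iota$ with $r$ a deformation retraction, transferring the $\pi_n$-isomorphism hypothesis to $\iota$; the long exact sequence giving $\pi_n(M_f,\,X)\,=\,0$ for all $n\,\geq\,1$ and all basepoints; and the skeleton-by-skeleton compression using the homotopy extension property, with the infinite concatenation of homotopies made continuous by scheduling the $n$-th stage on an interval such as $[1-2^{-n},\,1-2^{-n-1}]$ and invoking the weak topology of the CW complex at $t\,=\,1$. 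You correctly identify the two places where care is needed in the non-simply-connected setting: $\pi_1(Z,\,A)$ is only a pointed set (vanishing still suffices for compressing $1$-cells), and the case of $0$-cells outside $A$ is governed by the bijection on $\pi_0$, which is part of the hypothesis ``isomorphisms on all homotopy groups'' once components are tracked. It is worth knowing that Whitehead's original route in the cited papers was different in flavor --- organized around his combinatorial homotopy machinery rather than the compression lemma --- but for the purposes of this paper your proof is the cleaner and entirely adequate one.
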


\begin{theorem}[{Narasimhan's Theorem}]\label{Narasimhan's Theorem}
Let $X$ be an arbitrary complex Stein space (with or without singularities) of dimension $n$. Then the following statements hold:
\begin{enumerate}
\item $H_k(X,\,\Z)\,=\,0$\,\, for $k \,>\, n$;

\item $H_n(X,\,\Z)$\,\, is torsion-free;

\item $H^k(X,\,\Z)\,=\,0$\,\, for $k \,>\, n+1$;

\item $H^{n+1}(X,\,\Q)\,=\,0$.
\end{enumerate}
\end{theorem}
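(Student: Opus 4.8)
The plan is to reduce everything to a single geometric input and then run standard homological algebra. The key fact I would invoke is Hamm's theorem \cite[Proposition 2]{Ham}, already recalled in the remark above: a complex Stein space $X$ of complex dimension $n$ is homotopy equivalent to a CW complex $Y$ of real dimension at most $n$. Since homology and cohomology are homotopy invariants, I may compute all the groups in the statement from the cellular chain complex of $Y$, which satisfies $C_k(Y)\,=\,0$ for all $k\,>\,n$. Assertion (1) is then immediate: with no cells in degrees exceeding $n$, the cellular chain groups vanish there, so $H_k(X;\,\Z)\,\cong\,H_k(Y;\,\Z)\,=\,0$ for $k\,>\,n$.

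For assertion (2) I would observe that, because $C_{n+1}(Y)\,=\,0$, the top homology is $H_n(Y;\,\Z)\,=\,\ker\big(\partial_n\colon C_n(Y)\to C_{n-1}(Y)\big)$, a subgroup of the free abelian group $C_n(Y)$. A subgroup of a free abelian group is free abelian (in arbitrary rank), so $H_n(X;\,\Z)$ is free, and in particular torsion-free. For the cohomological statements I would feed assertion (1) into the universal coefficient theorem, which for any $k$ gives a short exact sequence
\[
0 \longrightarrow \operatorname{Ext}^1\!\big(H_{k-1}(X;\,\Z),\,\Z\big) \longrightarrow H^k(X;\,\Z) \longrightarrow \Hom\big(H_k(X;\,\Z),\,\Z\big) \longrightarrow 0 .
\]
When $k\,>\,n+1$ both $k$ and $k-1$ exceed $n$, so by assertion (1) the outer two terms vanish and $H^k(X;\,\Z)\,=\,0$, which is assertion (3). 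For assertion (4) I would use the same sequence with $\Q$-coefficients (equivalently, the field form $H^{n+1}(X;\,\Q)\,\cong\,\Hom_\Q(H_{n+1}(X;\,\Q),\,\Q)$): since $n+1\,>\,n$, assertion (1) gives $H_{n+1}(X;\,\Z)\,=\,0$, hence $H_{n+1}(X;\,\Q)\,=\,H_{n+1}(X;\,\Z)\otimes_\Z\Q\,=\,0$, and therefore $H^{n+1}(X;\,\Q)\,=\,0$. I note in passing that the freeness obtained in assertion (2) forces $\operatorname{Ext}^1(H_n(X;\,\Z),\,\Z)\,=\,0$, so the integral group $H^{n+1}(X;\,\Z)$ vanishes as well; I would record only the rational statement to match the form used in the later rational-homotopy arguments.

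The only genuinely hard step is the geometric input itself, namely that a (possibly singular) Stein space of complex dimension $n$ has the homotopy type of an at most $n$-dimensional CW complex. This is the Andreotti--Frankel--Narasimhan circle of ideas, proved via Morse theory applied to a strictly plurisubharmonic exhaustion function (whose critical points have index at most $n$) and extended to the singular case by Hamm; once it is granted, the remaining arguments are routine homological algebra. The one point requiring a little care is the infinite-rank situation: if $Y$ is not of finite type I must invoke the general theorem that subgroups of free abelian groups of arbitrary rank are free, rather than the finitely generated ``torsion-free $\Rightarrow$ free'' shortcut, in order to conclude the freeness in assertion (2).
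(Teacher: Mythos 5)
Your proposal is correct, but it takes a genuinely different route from the one the paper relies on: the paper gives no proof, deferring instead to Narasimhan's original argument (\cite[Theorem 3]{Nar} and the corollary following it). There, the homology assertions (1) and (2) are established directly by analytic means---Morse-theoretic study of a strictly plurisubharmonic exhaustion function, extended to the singular case via an exhaustion/limit argument---and the cohomological assertions (3) and (4) are then extracted by universal coefficients, essentially as in your second paragraph. You instead take as your sole geometric input Hamm's much later theorem \cite[Proposition 2]{Ham} that $X$ has the homotopy type of a CW complex of real dimension at most $n$ (a result the paper quotes anyway, proved by stratified Morse theory and not resting on Narasimhan's theorem, so there is no circularity), after which everything is routine homological algebra. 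What your route buys is a strictly sharper conclusion, which you correctly note in passing: since $H_n(X;\,\Z)\,\cong\,\ker\partial_n$ is a subgroup of a free abelian group it is \emph{free}, not merely torsion-free, whence $\operatorname{Ext}^1(H_n(X;\,\Z),\,\Z)\,=\,0$ and the integral cohomology vanishes already in degree $n+1$. This also illuminates the otherwise curious shape of the classical statement: torsion-freeness alone does not kill the Ext term (e.g. $\operatorname{Ext}^1(\Q,\,\Z)\,\neq\,0$), which is precisely why Narasimhan, knowing only (2), could assert integral vanishing for $k\,>\,n+1$ and had to settle for rational vanishing at $k\,=\,n+1$. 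Your explicit appeal to the arbitrary-rank version of ``subgroups of free abelian groups are free'' is exactly the right care point for possibly non-finite CW models; the price of your approach is that all the analytic difficulty is outsourced to Hamm's deeper homotopy-type theorem, whereas Narasimhan's 1967 proof is self-contained at the level of homology.
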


For Theorem \ref{Narasimhan's Theorem} we refer to \cite[Theorem 3]{Nar} and the corollary 
following it. If $X$ is a complex Stein manifold (i.e., without singularities) of dimension $n$, it was 
proved in \cite{And-Fra} that $H_k(X;\,\Z)$ are trivial for $k \,>\, n$ and $H_n(X; \,\Z)$ is free.

\section{Dichotomy for homotopy groups of Stein surfaces}\label{se3}
We will now discuss on elliptic-hyperbolic dichotomy for the higher homotopy groups of a finite CW complex. The following definition is in contrast
to what we have defined as rationally elliptic homotopy type in the Introduction.

\begin{definition}\label{Def:Hyperbolic Homotopy Type}	
Let $X$ be a simply connected CW-complex. Then $X$ is said to have \emph{rationally hyperbolic homotopy type} if
$\sum_{i=2}^{k}\dim \pi_i(X)\otimes_\Z \Q$ \emph{grows exponentially in $k$}, i.e., there exists a real number $C\,>\,1$ that depends only on $X$ such
that $$\sum_{i=2}^{k}\dim \pi_i(X)\otimes_\Z \Q \,>\, C^k$$ for all positive integers $k$ large enough.
\end{definition}
	
The higher homotopy groups of simply connected finite (hence compact) CW-complexes enjoy a dichotomy, which is recalled below.

\begin{theorem}[{\bf The elliptic-hyperbolic dichotomy for finite complexes}]\label{Thm: E-H dichotomy for finite complexes}
Let $X$ be a simply connected finite CW complex. Then either $X$ is of rationally elliptic homotopy type or it is of rationally
hyperbolic homotopy type.
\end{theorem}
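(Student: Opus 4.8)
The plan is to prove the elliptic-hyperbolic dichotomy for a simply connected finite CW complex $X$ by passing to the rational homotopy theory of its minimal Sullivan model. Since $X$ is a simply connected finite CW complex, its rational cohomology $H^*(X;\Q)$ is a finite-dimensional graded commutative algebra, and the homotopy groups $\pi_i(X)\otimes_\Z\Q$ are encoded by the generators of the minimal Sullivan model $(\Lambda V,d)$ of $X$, with $\dim V^i = \dim \pi_i(X)\otimes_\Z\Q$. The entire dichotomy is then a statement about the graded vector space $V$: either $\dim V = \sum_i \dim V^i < \infty$ (the elliptic case), or $V$ is infinite-dimensional, in which case I must show the partial sums grow at least exponentially (the hyperbolic case).

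The key steps are as follows. First, I would reduce the problem to the minimal model, invoking the foundational correspondence between the rational homotopy type of a simply connected space of finite type and its minimal Sullivan model (as in \cite{Fel-Hal-Tho2}). Second, in the elliptic case there is nothing further to prove beyond the definition; the content is entirely in showing that \emph{failure} of ellipticity forces hyperbolic growth. So I would assume $\sum_{i\geq 2}\dim\pi_i(X)\otimes_\Z\Q = \infty$, i.e.\ $V$ is infinite-dimensional, and aim to produce a constant $C>1$ with $\sum_{i=2}^k \dim V^i > C^k$ for large $k$. The standard route here is to analyze the ranks $\dim V^i$ via the generating function and the fact that $H^*(X;\Q)$ is finite-dimensional: finiteness of cohomology forces the Euler-characteristic-type identity relating even and odd degree generators, and a theorem of Friedlander–Halperin (the gap theorem) guarantees that the ranks cannot be too sparse. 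The crucial input is the exponential-growth theorem for rationally hyperbolic spaces (Félix–Halperin–Thomas), which states precisely that once $V$ is infinite-dimensional, the dimensions $\dim(\Lambda V)^k$ — equivalently the partial sums of the $\dim V^i$, up to the bookkeeping relating generators to the enveloping graded algebra — grow exponentially.

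I would then carry out the argument by citing and applying the structural results for minimal models: the Euler characteristic criterion ($\chi_\pi := \dim V^{\mathrm{even}} - \dim V^{\mathrm{odd}}$, and the formal dimension bound $\sum i\,\dim V^i$ controlled by the top nonvanishing cohomology) to characterize ellipticity, and then the dichotomy theorem proper to obtain the exponential lower bound in the complementary case. The conclusion follows by unwinding $\dim V^i = \dim\pi_i(X)\otimes_\Z\Q$ to restate the growth estimate in terms of homotopy ranks, matching the statement of Theorem~\ref{Thm: E-H dichotomy for finite complexes}.

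The main obstacle I anticipate is making the exponential lower bound sharp and explicit at the level of the \emph{generators} $V^i$ rather than at the level of the full free graded algebra $\Lambda V$ or the loop space homology $H_*(\Omega X;\Q)$. The cleanest statements in the literature (e.g.\ via $H_*(\Omega X;\Q)$ and its exponential growth) bound the total dimension of the homology of the loop space, and one must transfer this to a lower bound on $\sum_{i=2}^k \dim\pi_i(X)\otimes_\Z\Q$; this requires the fact that in the hyperbolic case the homotopy ranks themselves (not merely their products appearing in $\Lambda V$) cannot be eventually bounded, which is where I would lean most heavily on the deep results of \cite{Fel-Hal-Tho2}. Consequently, rather than reproving these, I would treat the transfer from loop-space growth to generator growth as the technical heart and cite the appropriate theorem, keeping the self-contained portion of the proof to the reduction and the application.
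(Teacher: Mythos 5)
Your proposal is correct and coincides with what the paper does: the paper offers no proof of this theorem at all, simply citing F\'elix--Halperin--Thomas (\cite{Fel-Hal-Tho1} and \cite[\S~33]{Fel-Hal-Tho2}), and your sketch is an accurate outline of precisely that cited argument, correctly identifying that the real content lies in the FHT exponential-growth theorem (including the transfer from loop-space homology growth to growth of the homotopy ranks themselves, which you rightly flag as the step one must cite rather than rederive).
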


For Theorem \ref{Thm: E-H dichotomy for finite complexes} we refer to \cite{Fel-Hal-Tho1} and \cite[\S~33]{Fel-Hal-Tho2}.

In \cite{Ham}, Helmut A. Hamm proved that any complex affine variety of dimension $n$ has the homotopy type of a finite CW complex of 
(real) dimension at most $n$. Therefore, from Theorem \ref{Thm: E-H dichotomy for finite complexes} it follows that any simply 
connected complex affine variety is either of elliptic homotopy type or of the hyperbolic homotopy type. The same holds for any complex 
affine variety having a finite fundamental group because any finite \'etale covering of an affine variety is again an affine variety 
and the higher homotopy groups remain unchanged.

In \cite[Proposition 2]{Ham}, Hamm had also shown that an $n$-dimensional complex Stein space is homotopic to a real $n$-dimensional CW 
complex. It may be noted that a Stein space may not always be homotopic to a finite CW complex. Thus Theorem \ref{Thm: E-H dichotomy 
for finite complexes} does not give a similar dichotomy result for simply connected Stein spaces. However the dichotomy --- in fact a 
stronger form of it --- does hold for Stein surfaces as shown below.

\begin{theorem}\label{Thm: Marked dichotomy for Stein surfaces}
Let $X$ be a complex connected Stein surface. Then the higher homotopy groups of $X$ satisfy one of the two mutually exclusive properties:
\begin{enumerate}
\item[(i)] $\sum_{i\geq2}\dim \pi_i(X)\otimes_\Z \Q \,\leq\, 2$;

\item[(ii)] $\sum_{i=2}^{k}\dim \pi_i(X)\otimes_\Z \Q$ grows exponentially in $k$.
\end{enumerate}
\end{theorem}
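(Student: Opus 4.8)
The plan is to reduce the entire statement to an explicit identification of the homotopy type of the universal cover. Since the higher homotopy groups are unaffected by passing to a covering space, we have $\pi_i(X)\cong\pi_i(\widetilde{X})$ for every $i\geq 2$, where $\widetilde{X}$ is the universal cover of $X$ (which exists, as $X$ is connected and locally contractible). By Hamm's theorem \cite[Proposition 2]{Ham}, $X$ is homotopy equivalent to a CW complex of real dimension at most $2$; lifting this homotopy equivalence to universal covers shows that $\widetilde{X}$ is homotopy equivalent to a \emph{simply connected} CW complex $Y$ of dimension at most $2$. Thus it suffices to analyze the rational homotopy of a simply connected $2$-dimensional CW complex.

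The key step is to prove that $Y\simeq\bigvee_{\alpha\in A}S^2$, a wedge of (possibly infinitely many) $2$-spheres, with $|A|=\dim_\Q H_2(Y;\Q)$. First, because $Y$ is a $2$-dimensional CW complex, its second integral homology $H_2(Y;\Z)=\ker(\partial_2\colon C_2\to C_1)$ is a subgroup of a free abelian group, hence free abelian; combined with the Hurewicz Theorem (Theorem \ref{Hurewicz Theorem}) this gives $\pi_2(Y)\cong H_2(Y;\Z)$, free abelian of some rank $r:=|A|\in\{0,1,2,\ldots\}\cup\{\infty\}$. Choosing generators and representing them by maps $S^2\to Y$ assembles a map $f\colon\bigvee_{\alpha}S^2\to Y$ which is an isomorphism on $H_2$ and, since both sides are simply connected and $2$-dimensional, an isomorphism on every homology group. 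Whitehead's Theorem (Theorem \ref{Whitehead's Theorem}) then promotes $f$ to a homotopy equivalence.

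It remains to run the dichotomy on the rank $r$. If $r=0$, then $Y$ is contractible and the sum in (i) is $0$. If $r=1$, then $Y\simeq S^2$, which is of rationally elliptic homotopy type with $\pi_2(S^2)\otimes_\Z\Q=\pi_3(S^2)\otimes_\Z\Q=\Q$ and all higher rational homotopy groups vanishing, so the sum in (i) equals $2$; both subcases land in (i). If $2\leq r<\infty$, then $Y$ is a simply connected \emph{finite} CW complex, and the Whitehead products among the $r$ generators generate a free graded Lie algebra on $r\geq 2$ generators, whose graded dimensions grow exponentially; in particular $Y$ has infinitely many nontrivial rational homotopy groups and so is not of elliptic type, whence Theorem \ref{Thm: E-H dichotomy for finite complexes} forces it to be rationally hyperbolic, which is precisely (ii). Finally, if $r=\infty$, then already $\dim\pi_2(X)\otimes_\Z\Q=\infty$, so every partial sum is infinite and (ii) holds trivially. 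The two alternatives are visibly mutually exclusive, since a uniformly bounded sum cannot grow exponentially.

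The main obstacle I anticipate lies in the second paragraph: carefully justifying that a simply connected $2$-dimensional CW complex is a wedge of $2$-spheres, including the freeness of $H_2(Y;\Z)$ and the legitimacy of all reductions for a Stein space that is only homotopy equivalent to (not equal to) a CW complex and whose universal cover need not be finite. Once the wedge decomposition is secured, the exponential growth for $r\geq 2$ is either the classical free graded Lie algebra computation (Hilton--Milnor) or a direct appeal to the finite-complex dichotomy applied to $\bigvee^r S^2$; the only care needed there is to confirm non-ellipticity, for which exhibiting the infinite tower of iterated Whitehead products $[x_1,x_2],\,[x_1,[x_1,x_2]],\,[x_2,[x_1,x_2]],\,\ldots$ already suffices.
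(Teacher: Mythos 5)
Your proposal is correct, but it takes a genuinely different route from the paper on the elliptic side of the dichotomy. The paper splits on whether $\sum_{i\geq2}\dim\pi_i(X)\otimes_\Z\Q$ is finite: in the finite case it never identifies the homotopy type of $\wt{X}$, instead combining the Hurewicz theorem, universal coefficients, and Narasimhan's vanishing (Theorem \ref{Narasimhan's Theorem}) to show $H^\ast(\wt{X};\Q)$ is finite-dimensional, and then invoking the Friedlander--Halperin arithmetic bound \cite[Corollary 1.3]{Fri-Hal} to get $\sum_{i\geq 2}\rank\,\pi_i \leq 2$; only in the infinite case does it prove $\wt{X}$ is a Moore $M(\Z^k,2)$-space, hence a wedge of $k$ spheres, and apply Theorem \ref{Thm: E-H dichotomy for finite complexes}. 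You instead classify the universal cover \emph{unconditionally}: via Hamm's $2$-dimensional CW model \cite[Proposition 2]{Ham}, freeness of $H_2$ of a $2$-complex, Hurewicz, and the homology Whitehead theorem, you show $\wt{X}\simeq\bigvee_{r}S^2$ (with $r$ possibly infinite) and read off both alternatives from $r$, with the bound $2$ coming from the explicit rational homotopy of $S^2$ rather than from \cite{Fri-Hal}. Your hyperbolic case ($2\leq r<\infty$) coincides with the paper's Case 2 (either route you sketch --- Hilton--Milnor/free graded Lie algebra growth, or non-ellipticity plus the finite-complex dichotomy --- is sound). What each approach buys: yours is more elementary and self-contained, avoids Friedlander--Halperin and Narasimhan entirely, and yields a sharper structural byproduct (every connected Stein surface has universal cover homotopy equivalent to a wedge of $2$-spheres, which subsumes the paper's Theorem \ref{Characterization of Elliptic Homotopy Type Surfaces}); the paper's route, by contrast, scales to higher-dimensional Stein spaces where no such wedge classification exists, and indeed is reused verbatim in its Theorems \ref{Thm: Bound of the rank of homotopy graded algebra for Stein spaces of dimension up to 3} and \ref{Thm: Bound of the rank of homotopy graded algebra for certain Stein spaces of dimension at least 4}.

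One small citation repair: the Whitehead theorem as stated in the paper (Theorem \ref{Whitehead's Theorem}) requires isomorphisms on \emph{homotopy} groups, whereas your map $f\colon\bigvee_\alpha S^2\to Y$ is only known to be a homology isomorphism; you should invoke the homology version for simply connected CW complexes (\cite[Corollary 4.33]{Hat}), which is also the fact underlying the paper's own appeal to uniqueness of Moore spaces. This is a matter of attribution, not a gap in the argument.
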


\begin{proof}
Let $\wt{X}$ be a universal covering of $X$. Since $X$ is a connected Stein space, so is $\wt{X}$. 

{\bf Case 1.}\, {\it Suppose that
$\sum_{i\geq2}\dim \pi_i(X)\otimes_\Z \Q$ is finite.}

Since $\pi_i(\wt{X})\,\cong\, \pi_i(X)$ for all $i\,\geq\, 2$ (cf. \cite[Chapter 4, Proposition 4.1]{Hat}), it follows that
the graded $\Q$-vector space $\pi_\ast(\wt{X};\, \Q)$ is finite-dimensional. In particular,
$\pi_2(X)\otimes_\Z\Q$ is a finite-dimensional $\Q$-vector space. By Theorem \ref{Hurewicz Theorem},
\begin{equation}\label{e-1}
H_2(\wt{X};\,\Z)\,\cong\, \pi_2(\wt{X})\,\cong\, \pi_2(X).
\end{equation}
So $H_2(\wt{X};\,\Q)$ is finite-dimensional. Thus by the universal coefficient theorem for cohomology (cf. \cite[Chapter 3, Theorem 3.2]{Hat}),
$$H^2(\wt{X};\,\Q)\,=\,\Hom_\Q(H_2(\wt{X};\, \Q),\,\Q)$$ is a finite-dimensional $\Q$-vector space.
Since $\wt{X}$ is a Stein surface, Theorem \ref{Narasimhan's Theorem} implies that $H^j(\wt{X};\, \Q)\,=\, 0$ for $j\, \geq\, 3$. Hence
the graded $\Q$-vector space $H^\ast(\wt{X};\, \Q)\,:=\, \bigoplus_{i\geq 0} H^i(\wt{X};\,\Q)$ is finite-dimensional. Now applying Friedlander--Halperin's result (cf. \cite[Corollary 1.3]{Fri-Hal}), it follows that
\begin{equation}\label{ej}
\sum_{i\geq 2}\dim \pi_i(X)\otimes_\Z \Q\, =\, \sum_{i\geq 2}\dim \pi_i(\wt{X})\otimes_\Z \Q
\,\leq\, n(\wt{X})\,\leq\, \dim{\wt{X}}\,=\,\dim X\,=\,2,
\end{equation}
where,
\begin{equation}\label{e0}
n(\wt{X})\,:=\,\sup\,\{m\,\in\, \Z\, \big\vert\,H^m(\wt{X};\,\Q)\,\neq\, 0\}.
\end{equation}
The last inequality in \eqref{ej} holds because, for any complex Stein space $V$, we have $n(V)\,\leq\, \dim V$ by Theorem \ref{Narasimhan's Theorem}.

{\bf Case 2.}\, {\it Assume that $\sum_{i\geq2}\dim \pi_i(X)\otimes_\Z \Q$ is infinite.}

If $\pi_2(X)\otimes_\Z \Q$ is infinite-dimensional, then the 
the second statement in the theorem is trivially satisfied by taking $C$ to be any real number $>\,1$. So assume that
$\pi_2(X)\otimes_\Z \Q$ is 
finite-dimensional, say
\begin{equation}\label{e1}
\dim \pi_2(X)\otimes_\Z \Q \,=\, k.
\end{equation}
Note that, $\pi_2(X)\,\cong\, H_2(\wt{X};\,\Z)$ using \eqref{e-1}, and $H_2(\wt{X};\,\Z)$ is a free abelian group since $\wt{X}$ is a Stein surface (cf. \cite[Korollar, Satz 1]{Ham}). Thus, $$\pi_2(X\,)=\,
	\begin{cases}
	\Z^k, &\text{if } \,k\,>\,0;\\
	0, &\text{if } \,k\,=\,0.
	\end{cases}
	$$

{\bf Claim.}\, \emph{The integer $k$ in \eqref{e1} is at least $2$.}

{\it Proof of Claim.} To prove this by contradiction, we first assume that $k=0$, i.e., $\pi_2(\wt{X})\,=\,0$. Since $\wt{X}$ is a Stein surface, $H_i(\wt{X};\,\Z)\,=\, 0$ for all $i\,\geq\, 3$ (cf. Theorem \ref{Narasimhan's Theorem}). Using Theorem \ref{Hurewicz Theorem} repeatedly, we obtain that all the homotopy groups of $\wt{X}$ are trivial. Now Theorem \ref{Whitehead's Theorem} applied to the constant map
from $\wt{X}$ to any chosen point $p \,\in\,\wt{X}$ proves that $\wt{X}$ is homotopic to $p$, whence $\wt{X}$ is contractible. But this is a contradiction because $\sum_{i\geq2}\dim\pi_i(X)\otimes_\Z \Q$ is infinite by assumption. Therefore, we conclude that $k$ is a positive integer.

Next assume that $k\,=\,1$. Then the reduced homology $\wt{H}_i(\wt{X};\,\Z)$ is trivial for all $i\, \not=\, 2$ and 
$H_2(\wt{X};\,\Z)\,\cong\, \Z$. 
Thus in this case, $\wt{X}$ is a Moore $M(\Z,\,2)$-space. Therefore, $\wt{X}$ is of the elliptic 
homotopy type by Observation \ref{An M(Z,n)-space is homotopic to an n-sphere}. But once again this leads to --- as above --- a
contradiction to the given condition that $\sum_{i\geq2}\dim \pi_i(X)\otimes_\Z \Q\,=\, \infty$. This implies $k\,\geq\, 2$ and the 
claim is proved.

Even if $k\,\geq\,2$, a very similar argument as in the case of $k\,=\,1$ in fact proves that $\wt{X}$ is a Moore $M(\Z^k,\, 2)$-space. Therefore $\wt{X}$ is homotopic to $S^2\,\vee \,S^2 \, \vee\,
\cdots\, \vee \,S^2$ (i.e., an 
wedge sum of $k$-copies of $S^2$), which is a simply connected finite CW complex. Since $\sum_{i\geq2}\dim \pi_i(X)\otimes_\Z \Q\,=\, \infty$, from Theorem \ref{Thm: E-H dichotomy for finite 
complexes}, it follows that $\wt{X}$ is of the hyperbolic homotopy type and hence so is $X$. 

This completes the proof.
\end{proof}

\section{Stein spaces of elliptic homotopy type}\label{se4}

In this section, we will investigate, for a complex connected Stein space $X$ of arbitrary dimension, the upper bound of
$\dim \pi_\ast(X;\, \Q)$ when it is finite.

\begin{theorem}\label{Thm: Bound of the rank of homotopy graded algebra for Stein spaces of dimension up to 3}
Let $X$ be a complex connected Stein space of dimension $\leq\,3$. Then $\pi_\ast(X;\, \Q)$
is finite-dimensional if and only if $$\dim \pi_\ast(X;\, \Q)\, :=\, \sum_{i\geq2}\dim \pi_i(X)\otimes_\Z \Q \,\leq\, \dim X.$$
\end{theorem}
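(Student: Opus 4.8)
The plan is to prove the theorem by reducing to the universal cover and invoking the Friedlander--Halperin bound on the total homotopy rank, exactly as in the proof of Theorem \ref{Thm: Marked dichotomy for Stein surfaces} but now tracking the dimension more carefully. First I would pass to a universal covering $\wt{X}$ of $X$, which is again a connected Stein space of the same dimension, and use the isomorphisms $\pi_i(\wt{X})\cong\pi_i(X)$ for $i\geq 2$ so that the total rank $\dim\pi_\ast(X;\Q)$ is unchanged. The ``if'' direction is immediate: if $\dim\pi_\ast(X;\Q)\leq\dim X\leq 3$, then in particular the sum is finite. So the content is the ``only if'' direction.

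For the ``only if'' direction, assume $\dim\pi_\ast(X;\Q)<\infty$. The strategy is to show that the rational cohomology $H^\ast(\wt{X};\Q)$ is finite-dimensional, and then apply Friedlander--Halperin \cite[Corollary 1.3]{Fri-Hal} to conclude
\begin{equation*}
\dim\pi_\ast(X;\Q)\,=\,\dim\pi_\ast(\wt{X};\Q)\,\leq\, n(\wt{X})\,\leq\,\dim\wt{X}\,=\,\dim X,
\end{equation*}
where $n(\wt{X})$ is the top nonvanishing rational cohomological degree as in \eqref{e0}, and the penultimate inequality is Narasimhan's Theorem \ref{Narasimhan's Theorem}. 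To establish the finite-dimensionality of $H^\ast(\wt{X};\Q)$, I would argue that finite total homotopy rank forces each individual $\pi_i(\wt{X})\otimes\Q$ to be finite-dimensional, then use the rational Hurewicz theorem (Theorem \ref{Rational Hurewicz Theorem}) to transfer this finiteness to the rational homology groups $H_i(\wt{X};\Q)$ in a range of degrees. Because $\wt{X}$ is Stein of dimension $n\leq 3$, Narasimhan's theorem forces $H^j(\wt{X};\Q)=0$ for $j\geq n+1$, so only the degrees $1,\ldots,n$ (with $n\leq3$) need to be controlled, and the range in which rational Hurewicz applies is large enough to cover these low degrees once the first nonvanishing rational homotopy degree is located.

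The main obstacle is handling the low-dimensional bookkeeping so that the rational Hurewicz isomorphism actually reaches all the relevant cohomological degrees $2,3$ simultaneously, rather than just the bottom one. Concretely, if $n$ is the smallest degree with $\pi_n(\wt{X})\otimes\Q\neq 0$, then Theorem \ref{Rational Hurewicz Theorem} gives isomorphisms $\pi_i(\wt{X})\otimes\Q\cong H_i(\wt{X};\Q)$ only for $i<2n-1$; I must verify that the finitely many cohomological degrees surviving Narasimhan's vanishing (namely up to $\dim X\leq 3$) fall within this range, possibly iterating the argument or treating the small number of cases $n=2,3$ by hand. This is precisely where the hypothesis $\dim X\leq 3$ is used: it keeps the top nonvanishing cohomology degree small enough that the single application of rational Hurewicz suffices, which is why the higher-dimensional case is relegated to the separate Theorem \ref{Thm: Bound of the rank of homotopy graded algebra for certain Stein spaces of dimension at least 4} under extra connectivity hypotheses. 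Once finite-dimensionality of $H^\ast(\wt{X};\Q)$ is secured, the Friedlander--Halperin bound closes the proof.
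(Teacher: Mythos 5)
Your proposal is correct and follows essentially the same route as the paper: pass to the universal cover $\wt{X}$, transfer finite homotopy rank to finite-dimensionality of $H^\ast(\wt{X};\Q)$ via a Hurewicz-type theorem in degrees $\leq 3$, kill all higher degrees with Narasimhan's Theorem \ref{Narasimhan's Theorem}, and conclude with the Friedlander--Halperin bound \cite[Corollary 1.3]{Fri-Hal}. The only divergence is cosmetic: the paper uses the integral Hurewicz theorem (Theorem \ref{Hurewicz Theorem}) --- the isomorphism $\pi_2(\wt{X})\cong H_2(\wt{X};\Z)$ together with surjectivity of $h_3$, legitimate since $\wt{X}$ has the homotopy type of a CW complex by Hamm \cite{Ham} --- and then tensors with $\Q$, whereas you invoke the rational version. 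As for the one obstacle you flag: when the first nonvanishing rational homotopy degree is $n=2$, the isomorphism range $i<2n-1=3$ indeed misses degree $3$, but no iteration or separate case analysis is needed --- the surjection clause at $i=2n-1=3$ in Theorem \ref{Rational Hurewicz Theorem} gives a surjection $\pi_3(\wt{X})\otimes_\Z\Q \twoheadrightarrow H_3(\wt{X};\Q)$, so finite rank of $\pi_3$ already forces $\dim H_3(\wt{X};\Q)<\infty$, which is exactly how the paper closes degree $3$ (via the integral $h_3$).
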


\begin{proof}
A complex connected Stein space of dimension $1$ is clearly an Eilenberg--MacLane $K(\pi,\,1)$-space. So
if $\dim X\,\leq \,2$, the theorem follows from Theorem \ref{Thm: Marked dichotomy for Stein surfaces}.
Thus assume that $X$ is a connected Stein $3$-fold.

Let $\wt{X}$ be a universal cover of $X$. Assume that $\sum_{i\geq2}\dim \pi_i(\wt{X})\otimes_\Z \Q\, <\, \infty$.
In particular, $\pi_2(\wt{X})$ and $\pi_3(\wt{X})$ are of finite rank. By Theorem \ref{Hurewicz Theorem}, $H_2(\wt{X};\,\Q)\,\cong\, \pi_2(\wt{X})\otimes_\Z\Q$ is also finite-dimensional. Theorem \ref{Hurewicz Theorem} also implies that the Hurewicz homomorphism $$h_3\,:\,\pi_3(\wt{X}) \, \longrightarrow \, H_3(\wt{X};\,\Z)$$ is surjective and 
therefore $$h_3\otimes \id\,:\,\pi_3(\wt{X})\otimes_\Z\Q \, \longrightarrow \, H_3(\wt{X};\,\Q)$$ is also surjective. Since 
$\pi_3(\wt{X})\otimes_\Z\Q$ is finite-dimensional, it follows that $H_3(\wt{X};\,\Q)$ is finite-dimensional. By the universal 
coefficient theorem for cohomology (cf. \cite[Chapter 3, Theorem 3.2]{Hat}), $H^i(\wt{X};\,\Q)\,=\,\Hom_\Q(H_i(\wt{X};\,\Q),\,\Q)$ is 
finite-dimensional for $i\,\leq\, 3$. Again, since $\wt{X}$ is a Stein $3$-fold, $H^j(\wt{X};\,\Q)\,=\,0$ for all $j>3$ using Theorem \ref{Narasimhan's Theorem}. Therefore, $H^*(\wt{X};\,\Q)$ is a finite-dimensional 
$\Q$-vector space. Since $\pi_*(\wt{X};\,\Q)$ is also finite-dimensional by assumption,
Friedlander-Halperin's result (cf. \cite[Corollary 1.3]{Fri-Hal}) completes the proof.
\end{proof}

In view of Theorem \ref{Thm: Bound of the rank of homotopy graded algebra for Stein spaces of dimension up to 3}, we
ask the following question.

\begin{question}
Let $X$ be a complex connected Stein $3$-fold. Then do the higher homotopy groups of $X$ satisfy one of the
following two mutually exclusive properties?
\begin{enumerate}
\item[(i)] $\sum_{i\geq2}\dim \pi_i(X)\otimes_\Z \Q \,\leq\, 3$;

\item[(ii)] $\sum_{i=2}^{k}\dim \pi_i(X)\otimes_\Z \Q$ grows exponentially in $k$.
\end{enumerate}
\end{question}

The following theorem on a class of complex connected Stein spaces of dimension at least $4$ is rather similar to
Theorem \ref{Thm: Bound of the rank of homotopy graded algebra for Stein spaces of dimension up to 3}.

\begin{theorem}\label{Thm: Bound of the rank of homotopy graded algebra for certain Stein spaces of dimension at least 4}
Let $X$ be a complex connected Stein space of dimension at least $4$. Then the following assertions hold. 

\begin{enumerate}
\item Assume that 
\begin{equation}\label{van_hom}
\pi_i(X)\otimes_\Z \Q \,=\,0, \quad \forall\ \ \, 2\,\leq\,i\,\leq\, \bigg\lfloor \frac{\dim X}{2} \bigg\rfloor,
\end{equation}
where $\lfloor\,.\,\rfloor$ denotes the greatest integer function. Then $X$ satisfies the finite homotopy rank-sum property if and only if 
$$\sum_{i\geq2}\dim \pi_i(X)\otimes_\Z \Q \,\leq\, \dim X.$$
\item Moreover, if $\dim X\,=\,4$ and $\pi_2(X)\otimes_\Z\Q\,=\,0$, then $X$ satisfies the finite homotopy rank-sum property
if and only if $$\sum_{i\geq2}\dim \pi_i(X)\otimes_\Z \Q \,\leq\, 3.$$
\end{enumerate}
\end{theorem}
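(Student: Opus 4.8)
The reverse implication in both parts is immediate: the asserted bound is a finite number, so it forces the finite homotopy rank-sum property. Hence in each part I only prove that the finite homotopy rank-sum property implies the stated bound. As in the proof of Theorem~\ref{Thm: Bound of the rank of homotopy graded algebra for Stein spaces of dimension up to 3}, the first step is to pass to a universal covering $\wt X$, which is again a connected Stein space of dimension $n := \dim X$ with $\pi_i(\wt X)\cong\pi_i(X)$ for all $i\ge2$ (cf. \cite[Chapter 4, Proposition 4.1]{Hat}); thus $\wt X$ is simply connected and inherits both the vanishing hypothesis \eqref{van_hom} and the finiteness of $\sum_{i\ge2}\dim\pi_i(\wt X)\otimes_\Z\Q$. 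The overall strategy is to upgrade this finiteness to the finite-dimensionality of $H^\ast(\wt X;\,\Q)$ and then apply Friedlander--Halperin's bound (cf. \cite[Corollary 1.3]{Fri-Hal}) in the form $\sum_{i\ge2}\dim\pi_i(\wt X)\otimes_\Z\Q\le n(\wt X)\le\dim\wt X$, where $n(\wt X):=\sup\{m:H^m(\wt X;\,\Q)\neq0\}$.

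For part (1), the point is that \eqref{van_hom} is exactly the input needed to run the Rational Hurewicz Theorem (Theorem~\ref{Rational Hurewicz Theorem}) with vanishing threshold $\lfloor n/2\rfloor+1$: this produces an isomorphism $\pi_i(\wt X)\otimes_\Z\Q\cong H_i(\wt X;\,\Q)$ for $i\le 2\lfloor n/2\rfloor$ together with a surjection $\pi_{2\lfloor n/2\rfloor+1}(\wt X)\otimes_\Z\Q\twoheadrightarrow H_{2\lfloor n/2\rfloor+1}(\wt X;\,\Q)$. Since each $\pi_i(\wt X)\otimes_\Z\Q$ is finite-dimensional, these relations make $H_i(\wt X;\,\Q)$ finite-dimensional for all $i\le 2\lfloor n/2\rfloor+1$, while Narasimhan's Theorem (Theorem~\ref{Narasimhan's Theorem}) gives $H_k(\wt X;\,\Q)=0$ for $k>n$. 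As $2\lfloor n/2\rfloor+1\ge n$, every degree $i\le n$ is covered by the Hurewicz relations and every degree $i>n$ by Narasimhan's vanishing, so $H_\ast(\wt X;\,\Q)$, and hence by universal coefficients $H^\ast(\wt X;\,\Q)$, is finite-dimensional. Friedlander--Halperin then closes part (1).

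For part (2) take $n=4$, so the hypothesis $\pi_2(X)\otimes_\Z\Q=0$ is precisely \eqref{van_hom}; part (1) already yields $\sum_{i\ge2}\dim\pi_i(X)\otimes_\Z\Q\le4$ with $\wt X$ rationally elliptic and $n(\wt X)\le4$. To sharpen the bound to $3$ I would split on $n(\wt X)$. If $n(\wt X)\le3$, then Friedlander--Halperin already gives total rank $\le3$. If $n(\wt X)=4$, then $H^2(\wt X;\,\Q)=0$, since the rational Hurewicz isomorphism gives $H_2(\wt X;\,\Q)\cong\pi_2(\wt X)\otimes_\Z\Q=0$. Because $\wt X$ is simply connected and rationally elliptic, $H^\ast(\wt X;\,\Q)$ is a Poincaré duality algebra of formal dimension $4$ (cf. \cite{Fel-Hal-Tho2}), so $H^3(\wt X;\,\Q)\cong H^1(\wt X;\,\Q)=0$ and $H^4(\wt X;\,\Q)\cong H^0(\wt X;\,\Q)=\Q$. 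Thus $H^\ast(\wt X;\,\Q)\cong H^\ast(S^4;\,\Q)$, so $\wt X$ is rationally homotopy equivalent to $S^4$, whose total homotopy rank equals $2\le3$.

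The only genuinely new ingredient beyond the dimension $\le3$ argument is the sharpening in part (2), and the hard part is excluding the extremal value $4$. What makes it work is that Poincaré duality of the rational cohomology of a simply connected elliptic space propagates the vanishing of $H^2$ into a rational $4$-sphere. If one prefers not to invoke this duality as a black box, the same conclusion can be obtained by a direct inspection of the minimal Sullivan model of $\wt X$: using minimality (the differential has no linear part) together with the absence of a degree-$2$ even generator, one checks that none of the partitions of a putative total rank $4$ into even and odd homotopy generators is compatible with formal dimension $\le4$ and finite-dimensional cohomology.
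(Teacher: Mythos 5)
Your proposal is correct. Part (1) is essentially identical to the paper's argument: pass to the universal cover, run the rational Hurewicz theorem (Theorem \ref{Rational Hurewicz Theorem}) with vanishing threshold $\lfloor \dim X/2\rfloor+1$, observe $2\lfloor \dim X/2\rfloor+1\geq\dim X$, combine with Narasimhan's vanishing (Theorem \ref{Narasimhan's Theorem}) and universal coefficients to get $\dim H^\ast(\wt X;\,\Q)<\infty$, and finish with Friedlander--Halperin.

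Part (2), however, takes a genuinely different route. The paper stays entirely inside the arithmetic of \cite[Corollary 1.3]{Fri-Hal}: writing $x_k=\rank\pi_k$, it extracts the weighted inequalities \eqref{c1} and \eqref{c2} from $n(\wt X)\leq 4$, deduces $x_4\leq 1$ and \eqref{c3}, rules out $x_5\neq 0$ by noting that $x_2=x_3=0$ would then force $x_5=\dim H_5(\wt X;\,\Q)=0$ via rational Hurewicz plus Narasimhan, and concludes by a two-case analysis on $x_7\in\{0,1\}$. You instead split on the formal dimension $n(\wt X)$: when $n(\wt X)\leq 3$ you quote the total-rank bound of \cite[Corollary 1.3]{Fri-Hal} directly, and when $n(\wt X)=4$ you invoke Halperin's theorem that simply connected elliptic spaces satisfy rational Poincar\'e duality (\cite[Part VI, \S 32]{Fel-Hal-Tho2}) together with the intrinsic formality of spheres to identify $\wt X$ as a rational $S^4$. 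Both steps are legitimate: $\wt X$ is elliptic by your part (1), so the duality theorem applies, and the minimal-model computation you sketch (a degree-$4$ generator $a$ with $da=0$ and a degree-$7$ generator $b$ with $db=a^2$, and nothing else) does verify that a simply connected space with $H^\ast\cong H^\ast(S^4;\,\Q)$ is rationally an $S^4$. The trade-off: the paper's proof uses only tools it has already quoted and is purely arithmetic, while yours imports heavier structure theory but yields more --- in the extremal case $n(\wt X)=4$ you get total rank exactly $2$, and your duality argument applied to $n(\wt X)=3$ (where $H^2=0$ forces a rational $S^3$) would in fact show that the sharp bound in part (2) is $2$ rather than $3$, something the paper's case analysis does not reveal.
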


\begin{proof}
Let $\wt{X}$ be a universal cover of $X$. Assume that
$\pi_\ast(X;\,\Q)$ is finite-dimensional. Thus $\dim \pi_i(\wt{X})\otimes_\Z\Q\, <\, \infty$ for all $i\,>\,1$.

\medskip
{\it Proof of (1).}\, Note that it suffices to prove that $H^*(\wt{X};\,\Q)$ is finite-dimensional.
Indeed, in that case Friedlander-Halperin's result (cf. \cite[Corollary 1.3]{Fri-Hal}) completes the proof exactly
in the same way as in the proof of Theorem \ref{Thm: Bound of the rank of homotopy graded algebra for Stein spaces of dimension up to 3}.

To prove that $H^*(\wt{X};\,\Q)$ is finite-dimensional, we use \cite[Theorem 1.1]{Kla-Kre}, which says that for a simply connected topological space $Y$ and an
integer $r \,>\,2$ satisfying $\pi_i(Y)\otimes_\Z\Q \,=\, 0$ for all $1 \,<\, i\, <\, r$, the Hurewicz homomorphism
induces an isomorphism $$h_i\otimes \id \,:\, \pi_i(Y)\otimes_\Z\Q \,\longrightarrow\,H_i(Y;\,\Q)$$ for
$1 \,\leq\, i\, <\, 2r\,-\,1$ and a surjection for $i \,=\, 2r\,-\,1$. Therefore, using \eqref{van_hom}
we obtain that $H_i(\wt{X};\,\Q)$ is finite-dimensional for $1\,\leq\,i\,\leq\,2\big\lfloor \frac{\dim X}{2} \big\rfloor\,
+\,1$. Now it is clear that $$2\bigg\lfloor \frac{\dim X}{2} \bigg\rfloor\,+\,1 \,\geq\, \dim X.$$ Therefore, it follows that
$\dim H_i(\wt{X};\,\Q)\, <\, \infty$ for all $1\,\leq\,i\,\leq\,\dim X$. Thus by the universal coefficient theorem for
cohomology (cf. \cite[Chapter 3, Theorem 3.2]{Hat}), $H^i(\wt{X};\,\Q)$ is also finite-dimensional for
all $1\,\leq\,i\,\leq\,\dim X$. Since $X$ is connected Stein space, so is $\wt{X}$ and thus by Theorem \ref{Narasimhan's Theorem} we have $H^j(\wt{X};\,\Q)\,=\,0$ for all $j \,>\,\dim X$. Hence $H^*(\wt{X};\,\Q)$ is finite-dimensional. This completes the proof of the first part.

\medskip
{\it Proof of (2).}\, Whenever $\dim X\,=\,4$, the condition in \eqref{van_hom} is equivalent to the statement
that $\pi_2(X)\otimes_\Z\Q\,=\,0$. Using the proof of (1), it is evident that $\pi_\ast(\wt{X};\, \Q)$ and $H^\ast(\wt{X};\,\Q)
\,:=\, \bigoplus_{i\geq 0} H^i(\wt{X};\,\Q)$ are both finite-dimensional $\Q$-vector spaces. We define $$x_k\,:=\,\rank(\pi_k(\wt{X}))\,=\,\rank(\pi_k(X))\,=\,\dim \pi_k(X)\otimes_\Z\Q,$$ for all integers $k\,\geq\,2$.
Since $x_2\,=\,0$, using Friedlander--Halperin's result (cf. \cite[Corollary 1.3]{Fri-Hal}) it can be deduced that 
\begin{equation}\label{c1}
\sum_{k\geq 2}2kx_{2k}\,\,\leq\,\, 4,
\end{equation}
\begin{equation}\label{c2}
\sum_{k\geq 2}(2k-1)x_{2k-1}\,\,\leq\,\, 7.
\end{equation}
Indeed, we know that $$\sup\,\{d\,\in\, \Z\,\,\big\vert\,\, H^d(\wt{X};\, \Q)\,\neq\,0\} \,\leq\,4$$ by applying Theorem \ref{Narasimhan's Theorem} to the Stein $4$-fold $\wt{X}$, and
hence \eqref{c1} and \eqref{c2} follow. It is deduced from \eqref{c1} and \eqref{c2} that
\begin{itemize}
\item $x_4\,\leq\,1$,

\item $x_{2i}\,=\,0\,=\,x_{2j-1}$ for all $i\,>\,2$ and $j\,>\,4$, and 

\item the inequality
\begin{equation}\label{c3}
3x_3\,+\,5x_5\,+\,7x_7\,\leq\,7
\end{equation}
holds.
\end{itemize}

{\bf Claim.}\, \emph{$x_5\,=\,0$.}

{\it Proof of Claim.}\, If $x_5\,\not=\,0$, then from \eqref{c3}, it is evident that $x_3\,=\,0$. Since $x_k\,=\,0$ for $1\,<k\,<4$, 
applying Theorem \ref{Rational Hurewicz Theorem}, it is deduced that $x_5\,=\,\dim H_5(\wt{X};\,\Q)$, 
which is zero by Theorem \ref{Narasimhan's Theorem} as $\wt{X}$ is a Stein $4$-fold. This 
contradicts our assumption and hence the claim follows.

Now from \eqref{c3}, it is clear that $x_3 \,\leq\,2$ and $x_7\,\leq\,1$.

 {\bf Case 1.}\, \emph{Assume that $x_7\,=\,0$.}

In this case, it turns out that $x_k\,=\,0$ for all $k\,\geq\,5$; also recall that $x_3\,\leq\,2$ and $x_4\,\leq\,1$. Therefore 
$$\sum_{i\geq2}\dim \pi_i(X)\otimes_\Z \Q \,=\,x_3\,+\,x_4 \,\leq\, 3.$$

 {\bf Case 2.}\, \emph{Assume that $x_7\,=\,1$.}

In this case, it is clear from \eqref{c3} that $x_3\,=\,x_5\,=\,0$. So
$$\sum_{i\geq2}\dim \pi_i(X)\otimes_\Z \Q \,=\,x_4\,+\,x_7\,\leq\, 2.$$
Combining all cases together, the proof of the second part follows.
This completes the proof of the theorem.
\end{proof}

\section{Stein surfaces of elliptic homotopy type}\label{se5}

The following theorem describes all simply connected Stein surfaces of elliptic homotopy type.

\begin{theorem}\label{Characterization of Elliptic Homotopy Type Surfaces}
Let $X$ be a simply connected complex Stein surface. Then $X$ is of elliptic homotopy type if and only if one of the following
two statements hold:
\begin{enumerate}
\item[(i)] $X$ is contractible.

\item[(ii)] $X$ is homotopic to $S^2$; {\underline{\rm or}} equivalently, $H_2(X;\,\Z)\,=\,\pi_2(X)\,=\,\Z$.
\end{enumerate}
\end{theorem}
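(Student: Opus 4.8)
The plan is to exploit that a simply connected space is its own universal cover, so $X=\wt{X}$, and to combine the Hurewicz and Narasimhan theorems to pin down the integral homology of $X$ exactly. The reverse implication is essentially free: a contractible space has vanishing higher homotopy and vanishing reduced homology, so both defining conditions of elliptic homotopy type hold trivially, while $S^2$ (being an $M(\Z,2)$-space) is of elliptic homotopy type by Observation \ref{An M(Z,n)-space is homotopic to an n-sphere}. I would therefore dispose of $(\Leftarrow)$ in one line and concentrate on the forward implication, which closely parallels Case 1 and the Claim in the proof of Theorem \ref{Thm: Marked dichotomy for Stein surfaces}.

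For the forward direction, suppose $X$ is of elliptic homotopy type. Then $\pi_2(X)\otimes_\Z\Q$ is finite-dimensional, and by Theorem \ref{Hurewicz Theorem} the Hurewicz map gives $\pi_2(X)\cong H_2(X;\,\Z)$; moreover $H_2(X;\,\Z)$ is free abelian since $X$ is a Stein surface (cf. \cite[Korollar, Satz 1]{Ham}). Hence $H_2(X;\,\Z)\cong\Z^k$ with $k:=\dim\pi_2(X)\otimes_\Z\Q<\infty$. Because $X$ is simply connected, $H_1(X;\,\Z)=0$, and by Theorem \ref{Narasimhan's Theorem} one has $H_i(X;\,\Z)=0$ for all $i\geq3$. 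Thus the reduced integral homology of $X$ is concentrated in degree $2$ with value $\Z^k$, so $X$ is a Moore $M(\Z^k,2)$-space in the sense of Definition \ref{Def: Moore Spaces}.

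I would then argue by the value of $k$. If $k=0$, then $\wt{H}_i(X;\,\Z)=0$ for every $i$; repeated application of Theorem \ref{Hurewicz Theorem} shows that all homotopy groups of $X$ vanish, and Theorem \ref{Whitehead's Theorem} applied to a constant map (using that a Stein surface is homotopic to a $2$-dimensional CW complex by \cite[Proposition 2]{Ham}) yields that $X$ is contractible, giving (i). If $k=1$, then $X$ is an $M(\Z,2)$-space, hence homotopic to $S^2$, giving (ii). If $k\geq2$, then $X$ is homotopic to the wedge $S^2\vee\cdots\vee S^2$ of $k$ copies of $S^2$, a simply connected finite CW complex which is of rationally hyperbolic homotopy type by Theorem \ref{Thm: E-H dichotomy for finite complexes}; this contradicts the assumed finiteness of $\sum_{i\geq2}\dim\pi_i(X)\otimes_\Z\Q$. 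Hence $k\in\{0,1\}$, completing $(\Rightarrow)$.

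The only delicate points, rather than genuine obstacles, are the two structural inputs specific to Stein surfaces: the freeness of $H_2(X;\,\Z)$ from \cite{Ham}, which upgrades ``finite rank'' to an honest ``$\Z^k$'', and the vanishing $H_i(X;\,\Z)=0$ for $i\geq3$ from Theorem \ref{Narasimhan's Theorem}, which is what makes $X$ a genuine Moore space. Once these are in place, identifying $X$ with a wedge of $2$-spheres and invoking the classical dichotomy for finite complexes to exclude $k\geq2$ reuses exactly the mechanism already established in the proof of Theorem \ref{Thm: Marked dichotomy for Stein surfaces}, so no new difficulty arises there.
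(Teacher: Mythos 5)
Your setup (Hurewicz plus freeness of $H_2$ from \cite{Ham} plus the vanishing from Theorem \ref{Narasimhan's Theorem}, identifying $X$ as a Moore $M(\Z^k,2)$-space) is correct, and your $k=0$ and $k=1$ cases are fine. But there is a genuine gap at the exclusion of $k\geq 2$: you assert that the wedge of $k\geq 2$ copies of $S^2$ ``is of rationally hyperbolic homotopy type by Theorem \ref{Thm: E-H dichotomy for finite complexes}.'' That theorem only asserts the dichotomy --- elliptic \emph{or} hyperbolic --- and cannot tell you which alternative holds. The mechanism you say you are reusing from the proof of Theorem \ref{Thm: Marked dichotomy for Stein surfaces} worked there precisely because the standing hypothesis in that case was $\sum_{i\geq2}\dim\pi_i(X)\otimes_\Z\Q=\infty$, which rules out ellipticity of the wedge and so forces the hyperbolic alternative. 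Your hypothesis is the opposite: $X$ is assumed elliptic, so $X$ (and hence the wedge it is homotopy equivalent to, which also has finite Betti numbers) satisfies both defining conditions of ellipticity as far as your argument has established, and the dichotomy yields no contradiction whatsoever. As written, $k\geq 2$ has not been excluded, and this exclusion is exactly where the substance of the forward direction lies.

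There are two ways to close the gap. One is to import the classical fact --- nowhere stated in the paper --- that a wedge of at least two spheres is rationally hyperbolic: for instance, by Hilton's theorem, or because the rational homotopy Lie algebra of $S^2\vee\cdots\vee S^2$ ($k\geq 2$ summands) is a free graded Lie algebra on $k$ generators and hence infinite-dimensional (cf.\ \cite{Fel-Hal-Tho2}). The other is the paper's actual route, which avoids the wedge entirely: having shown $H^\ast(X;\,\Q)$ is finite-dimensional (Hurewicz, universal coefficients, and Theorem \ref{Narasimhan's Theorem}), one applies Friedlander--Halperin's inequalities \cite[Corollary 1.3]{Fri-Hal} to get $\sum_{k\geq1}2k\dim\pi_{2k}(X)\otimes_\Z\Q\,\leq\, n(X)\,\leq\,2$, which gives $\dim\pi_2(X)\otimes_\Z\Q\,\leq\,1$ at once, i.e.\ $k\leq 1$, after which your Moore-space analysis finishes the proof. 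A minor further point: the ``or equivalently'' clause in (ii) requires showing that a simply connected Stein surface with $H_2(X;\,\Z)\,=\,\Z$ is homotopic to $S^2$; your $k=1$ step contains this implicitly, but it should be stated, as the paper does in its proof of the reverse implication.
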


\begin{proof}
If $X$ satisfies one of the above two conditions, then $X$ is evidently of the elliptic homotopy type.
Concerning the second statement in the theorem, the equivalence can be established as follows.
Take a simply connected Stein surface $V$ with $H_2(V;\,\Z)\,=\,\Z$. By Theorem \ref{Narasimhan's Theorem} we have $H_j(V;\,\Z)\,=\,0$ for all $j \,\geq\, 3$. Therefore $V$ is a Moore $M(\Z,\,2)$-space,
and hence $V$ is homotopic to $S^2$.

To prove the converse, let $X$ be a simply connected complex Stein surface
of elliptic homotopy type. This implies, in particular, that $\pi_2(X)$ is of finite rank. 
Since by Theorem \ref{Hurewicz Theorem}, $\pi_2(X)\,\cong\, H_2(X;\,\Z)$, it follows that $H_2(X;\,\Q)$ is a 
finite-dimensional $\Q$-vector space. Thus $H^2(X;\,\Q)$ is also finite-dimensional. As $H^j(X;\,\Q)\,=\,0$ for all $j\,\geq\, 3$, from 
Theorem \ref{Narasimhan's Theorem} it follows that the graded $\Q$-vector space $H^\ast(X;\,\Q)$ is finite-dimensional. Therefore, 
using Friedlander--Halperin's result, \cite[Corollary 1.3]{Fri-Hal}, it is deduced that
\begin{equation}\label{e3}
\sum_{k\geq 1}2k\dim \pi_{2k}(X)\otimes_\Z\Q\,\,\leq\,\, n(X), \end{equation}
\begin{equation}\label{e4}
\sum_{k\geq 2}(2k-1)\dim \pi_{2k-1}(X)\otimes_\Z\Q\,\,\leq\,\, 2n(X)-1,
\end{equation}
where, $n(X)\,:=\,\sup\,\{d\,\in\, \Z\,\,\big\vert\,\, H^d(X;\, \Q)\,\neq\,0\}$ (see also \eqref{e0}).

Since $X$ is a Stein surface, we have $n(X)\,\leq\, 2$ by Theorem \ref{Narasimhan's Theorem}. Therefore, using \eqref{e3} and \eqref{e4}
it follows that 
\begin{equation}\label{eqn:e4'}
\pi_i(X)\otimes_\Z\Q\,=\,0 \text{ for all } i\,\geq\, 4 \text{ and } \dim \pi_j(X)\otimes_\Z\Q \,\leq\, 1 \text{ for } j\,=\,2,\,3. 
\end{equation}	
Suppose that $X$ is not contractible. Since $X$ is a Stein surface, by Theorem \ref{Narasimhan's Theorem}, it follows that $H_j(X;\,\Z)\,=\,0$ for all $j\,>\,2$. Since $X$ is simply
connected but not contractible, $H_2(X;\,\Z)\,\not=\, 0$. Theorem \ref{Hurewicz Theorem} says that $\pi_2(X)\,\cong\, H_2(X;\,\Z)\, \not=\, 0$. It is known that $H_2(X;\, \Z)$ is a free abelian group \cite[Korollar, Satz 1]{Ham}.
Thus $\pi_2(X)$ is also a finitely generated nontrivial free abelian group. Since $\dim \pi_2(X)\otimes_\Z\Q
\,\leq\, 1$ (by \eqref{eqn:e4'}), 
and $\pi_2(X)$ is non-trivial, we conclude that $\pi_2(X)\,\cong \,\Z$. Therefore $\widetilde{H}_i(X;\,\Z)$ is trivial
for all $i$ except $i\,=\,2$ and $H_2(X;\,\Z)\,\cong \,\Z$.
Consequently, $X$ is a Moore $M(\Z,\,2)$-space, which implies that $X$ is homotopically equivalent to $S^2$.
\end{proof}

The following corollary shows that for a complex connected Stein surface the finite homotopy rank-sum property depends only on its second homotopy group.

\begin{cor}\label{Cor: Characterization of E-type Stein surfaces}
Let $X$ be a complex connected Stein surface. Then $X$ satisfies the finite homotopy rank-sum property if and only if one of the
following two statements holds:
\begin{enumerate}
\item[(i)] $\pi_2(X)\,=\,0$; {\underline{\rm or}} equivalently, $X$ is an Eilenberg--MacLane $K(\pi_1(X),\,1)$-space.

\item[(ii)] $\pi_2(X)\,=\,\Z$; {\underline{\rm or}} equivalently the universal cover of $X$ is homotopic to $S^2$.
\end{enumerate}
\end{cor}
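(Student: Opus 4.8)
The plan is to reduce everything to the simply connected case already settled in Theorem \ref{Characterization of Elliptic Homotopy Type Surfaces} by passing to the universal cover. Let $\wt{X}$ denote a universal covering of $X$; since $X$ is a connected Stein surface, so is $\wt{X}$, and it is moreover simply connected. Because $\pi_i(\wt{X})\cong\pi_i(X)$ for all $i\geq2$, the space $X$ satisfies the finite homotopy rank-sum property if and only if $\wt{X}$ does. I would first record that for the \emph{simply connected} Stein surface $\wt{X}$ the finite homotopy rank-sum property is equivalent to being of rationally elliptic homotopy type: the forward implication is immediate since ellipticity includes finite rank-sum, while the converse follows exactly as in Case 1 of the proof of Theorem \ref{Thm: Marked dichotomy for Stein surfaces}, using the Hurewicz isomorphism $H_2(\wt{X};\Q)\cong\pi_2(\wt{X})\otimes_\Z\Q$, the universal coefficient theorem, and the vanishing $H^j(\wt{X};\Q)=0$ for $j\geq3$ from Narasimhan's Theorem \ref{Narasimhan's Theorem}, to conclude that $H^\ast(\wt{X};\Q)$ is finite-dimensional.

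Granting this equivalence, the proof becomes a chain of biconditionals: $X$ satisfies the finite homotopy rank-sum property $\iff$ $\wt{X}$ does $\iff$ $\wt{X}$ is of elliptic homotopy type $\iff$ (by Theorem \ref{Characterization of Elliptic Homotopy Type Surfaces}) $\wt{X}$ is contractible or homotopic to $S^2$. It remains only to translate these two alternatives into conditions on $\pi_2(X)$. By the Hurewicz Theorem \ref{Hurewicz Theorem}, $\pi_2(X)\cong\pi_2(\wt{X})\cong H_2(\wt{X};\Z)$, and this group is free abelian since $\wt{X}$ is a Stein surface (cf. \cite[Korollar, Satz 1]{Ham}). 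Contractibility of $\wt{X}$ is thus equivalent to $\pi_2(X)=0$, while $\wt{X}\simeq S^2$ forces $H_2(\wt{X};\Z)\cong\Z$, i.e. $\pi_2(X)=\Z$; conversely, if $\pi_2(X)=\Z$, then $H_i(\wt{X};\Z)=0$ for $i\geq3$ by Narasimhan's Theorem shows that $\wt{X}$ is a Moore $M(\Z,2)$-space and hence homotopic to $S^2$. This establishes the biconditional between the finite homotopy rank-sum property and the disjunction of (i) and (ii), and simultaneously proves the ``or equivalently'' clause in (ii).

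For the internal equivalence in (i), I would check that $\pi_2(X)=0$ is equivalent to $X$ being an Eilenberg--MacLane $K(\pi_1(X),1)$-space, noting that this holds for every Stein surface independently of the finite rank-sum hypothesis. Indeed, if $\pi_2(\wt{X})=0$, then $H_2(\wt{X};\Z)=0$ by Hurewicz and $H_i(\wt{X};\Z)=0$ for $i\geq3$ by Narasimhan's Theorem, so $\wt{X}$ has trivial reduced homology; repeated application of the Hurewicz Theorem \ref{Hurewicz Theorem} then kills all higher homotopy groups, and Whitehead's Theorem \ref{Whitehead's Theorem} applied to the inclusion of a point shows $\wt{X}$ is contractible. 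Hence $\pi_i(X)=0$ for all $i\geq2$, which is exactly the assertion that $X$ is a $K(\pi_1(X),1)$-space; the reverse implication is trivial.

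I do not expect a serious obstacle, since the argument is largely bookkeeping once Theorem \ref{Characterization of Elliptic Homotopy Type Surfaces} is available. The one point requiring care is excluding the possibility $\pi_2(X)\cong\Z^k$ with $k\geq2$ under the finite rank-sum hypothesis: such a $\wt{X}$ would be a wedge of $k\geq2$ copies of $S^2$, which is rationally hyperbolic and is therefore ruled out precisely by the dichotomy of Theorem \ref{Thm: Marked dichotomy for Stein surfaces} (equivalently, by the inequalities \eqref{e3}--\eqref{e4} forcing $\dim\pi_2(X)\otimes_\Z\Q\leq1$). This is what pins the two alternatives down to $\pi_2(X)=0$ and $\pi_2(X)=\Z$ rather than allowing a free abelian group of higher rank.
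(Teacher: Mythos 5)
Your proposal is correct and follows essentially the same route as the paper's own proof: pass to the universal cover, invoke Theorem~\ref{Characterization of Elliptic Homotopy Type Surfaces}, and translate the two alternatives into conditions on $\pi_2(X)$ via the Hurewicz isomorphism, Narasimhan's vanishing, Hamm's freeness of $H_2$, and the Moore-space identification. The additional details you supply --- the equivalence of the finite homotopy rank-sum property with ellipticity for the simply connected Stein cover, and the Hurewicz--Whitehead argument for contractibility when $\pi_2(X)=0$ --- are precisely the steps the paper compresses as ``clearly'' and ``straightforward,'' so nothing is missing.
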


\begin{proof}
Let $\wt{X}$ be a universal cover of $X$. Since $X$ is a connected Stein surface, so is $\wt{X}$. Clearly, we have
$\sum_{i\geq2}\dim \pi_i(X)\otimes_\Z \Q \, <\, \infty$ if and only if $\wt{X}$ is of elliptic homotopy type. By
Theorem \ref{Characterization of Elliptic Homotopy Type Surfaces}, this is in fact equivalent to
the statement that $\wt{X}$ is either contractible or it is homotopic to $S^2$. Again, it is evident from the
definition that $\wt{X}$ is contractible if and only if $X$ is an Eilenberg--MacLane $K(\pi_1(X),\,1)$-space. Now it
only remains to establish the equivalences between two statements in both (i) and (ii).

By Theorem \ref{Hurewicz Theorem} it follows that $$H_2(\wt{X};\,\Z)\,\cong\, \pi_2(\wt{X})\,\cong\,\pi_2(X).$$ Again, since $\wt{X}$ is a Stein surface, $H_j(\wt{X};\,\Z)\,=\,0$ for $j\geq 3$ by Theorem \ref{Narasimhan's Theorem}. Thus $\wt{X}$ is a Moore $M(\pi_2(X),\,2)$-space. Therefore, the following are true:
\begin{itemize}
\item[(a)] $\wt{X}$ is contractible (equivalently, $X$ is an Eilenberg--MacLane $K(\pi_1(X),\,1)$-space) whenever $\pi_2(X)$ is trivial. 

\item[(b)] $\wt{X}$ is homotopic to $S^2$ whenever $\pi_2(X)$ is isomorphic to $\Z$.
\end{itemize}
The proofs of the other directions of the above equivalences in (i) and (ii) are straightforward. This completes the proof.
\end{proof}

S. Kaliman and M. Zaidenberg has introduced the technique of affine modification in \cite{Kal-Zai}. They have also deduced many topological results for certain 
hypersurfaces in affine spaces using the above technique See \cite[Section 4]{Kal-Zai} for more details. We will apply their result followed by Theorem 
\ref{Characterization of Elliptic Homotopy Type Surfaces} to construct smooth affine fourfolds of elliptic homotopy type. The following example is such an 
illustration.

\begin{example}
\emph{Let $X_0$ be an irreducible smooth affine hypersurface in $\C^3$ given by the equation $p(x,\,y,\,z)\,=\,0$,
where $p(X,\,Y,\,Z)\,\in \,\C[X,\,Y,\,Z]$ is a 
non-constant polynomial. Assume that $X_0$ is of the elliptic homotopy type. Then the affine hypersurface $X$ in $\C^5$ defined by the 
equation $uv\,=\,p(x,y,z)$ is a smooth fourfold which is either diffeomorphic to $\R^8$ or homotopically equivalent to the real $4$-sphere $S^4$. Consequently, $X$ is of the elliptic homotopy type.}

--- We will write a proof in the support of all the assertions made above.

--- Using the Jacobian criterion of smoothness it is easy to see that $X$ is smooth if and only if $X_0$ is smooth. Since $X_0$ is
a smooth and irreducible surface, $X$ is clearly an affine irreducible fourfold. Using \cite[Proposition 4.1]{Kal-Zai}, we get the following:
\begin{enumerate}
\item $X$ is simply connected, and

\item $\wt{H}_j(X;\,\Z) \,\cong\, \wt{H}_{j-2}(X_0;\,\Z)$ for all $j\,\geq\, 0$.
\end{enumerate}
Since $X_0$ is of the elliptic homotopy type, first of all, $X_0$ is simply connected, by definition. It follows from
Theorem \ref{Characterization of Elliptic Homotopy Type Surfaces} that $X_0$ is either contractible or it is homotopic to $S^2$.

{\bf Claim 1.}\, \emph{If $X_0$ is contractible, then $X$ is also contractible. In that case, $X$ is in fact diffeomorphic to $\R^8$.} 

{\it Proof of Claim 1.}\, Since $\wt{H}_j(X_0,\,\Z)\,=\,0$ for all $j \,\geq\, 0$, from \cite[Proposition 4.1]{Kal-Zai} it is 
deduced that $\wt{H}_j(X;\,\Z)\,=\,0$ for all $j\,\geq\, 0$. As $X$ is simply connected, and all its reduced integral 
homology groups are trivial, using Theorem \ref{Hurewicz Theorem} repeatedly it follows that $\pi_i(X)$ is 
trivial for all $i\,\in\, \N$. 
Thus $X$ is contractible. Now the assertion that $X$ is diffeomorphic to $\R^8$ follows immediately from \cite[Corollary 4.1]{Kal-Zai}.

{\bf Claim 2.}\, \emph{If $X_0$ is homotopic to $S^2$, then $X$ is homotopic to $S^4$.}

{\it Proof of Claim 2.}\, As $X$ is irreducible, we have $\wt{H}_0(X;\,\Z)\,=\,0$.
Since $X$ is simply connected it follows that $H_1(X;\,\Z)\,=\,0$. Again
the same result implies that
\begin{itemize}
\item $H_2(X;\,\Z)\,\cong\, \wt{H}_0(X_0,\,\Z)\,=\,\wt{H}_0(S^2,\,\Z)\,=\,0$;

\item $H_3(X;\,\Z)\,\cong\, H_1(X_0;\,\Z)\,=\,H_1(S^2;\,\Z)\,=\,0$; and

\item $H_4(X;\,\Z)\,\cong\, H_2(X_0;\,\Z)\,\cong\,H_2(S^2;\,\Z)\,\cong\, \Z$.
\end{itemize}
Next, since $X$ is an affine fourfold, we have $H_j(X;\,\Z)\,=\,0$ for all $j\,>\,4$ by Theorem \ref{Narasimhan's Theorem}.
Hence it follows that $X$ is a simply connected affine fourfold which is a Moore $M(\Z,\,4)$-space. Hence $X$ is homotopically equivalent to $S^4$.
\end{example}

The next theorem puts a strong necessary condition on the fundamental group of a complex connected Stein space $X$ for which
$\sum_{i\geq 2}\dim \pi_i(X)\otimes_\Z \Q$ is finite.

\begin{theorem}\label{Fundamental Group of Elliptic Type Stein Surface}
Let $X$ be a complex connected Stein surface satisfying the finite homotopy rank-sum property. Then the following two statements hold:
\begin{enumerate}
\item Every non-trivial torsion element of $\pi_1(X)$, if it exists, is of order $2$.

\item Let $\wt{X}$ be a universal cover of $X$. For every non-trivial torsion element $\sigma$ in $\pi_1(X)$ the quotient space 
$\wt{X}/{\langle \sigma \rangle}$ is a Moore $M(\Z/2\Z,\,1)$-space.
\end{enumerate}
\end{theorem}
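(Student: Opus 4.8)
The plan is to exploit the earlier structural result (Corollary \ref{Cor: Characterization of E-type Stein surfaces}) which tells us that a Stein surface with finite homotopy rank-sum must have $\pi_2(X)$ equal to either $0$ or $\Z$, so that the universal cover $\wt{X}$ is contractible or homotopic to $S^2$. In either case $\wt{X}$ is homotopy equivalent to a CW complex on which $\pi_1(X)$ acts freely by deck transformations. The key observation is that any nontrivial torsion element $\sigma \in \pi_1(X)$ generates a finite cyclic subgroup $\langle \sigma \rangle$, and the quotient $\wt{X}/\langle \sigma \rangle$ is a covering space of $X$ sitting between $\wt{X}$ and $X$; it has fundamental group $\langle \sigma \rangle \cong \Z/n\Z$ where $n$ is the order of $\sigma$, and the same higher homotopy groups as $\wt{X}$.

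First I would handle the contractible case: if $\wt{X}$ is contractible, then $\wt{X}/\langle\sigma\rangle$ is an Eilenberg--MacLane space $K(\Z/n\Z,1)$. But such a space has nonzero rational (indeed, integral) homology in infinitely many degrees when $n \geq 2$ (for instance, an infinite-dimensional lens space computation shows $H_{2i}(\Z/n\Z;\,\Z) = \Z/n\Z$ and all odd homology above degree one is $\Z/n\Z$ as well). This contradicts Narasimhan's Theorem (Theorem \ref{Narasimhan's Theorem}), which forces $H_k(\wt{X}/\langle\sigma\rangle;\,\Z) = 0$ for $k > 2$ since the quotient is again a Stein surface of complex dimension $2$. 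Hence $K(\Z/n\Z,1)$ can be a Stein surface only if its higher homology vanishes, which rules out every $n \geq 2$ --- so in the contractible case $\pi_1(X)$ is torsion-free, and statement (1) holds vacuously.

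The substantive case is $\wt{X} \simeq S^2$. Here $\langle\sigma\rangle \cong \Z/n\Z$ acts freely on a space homotopy equivalent to $S^2$, and the quotient $Y := \wt{X}/\langle\sigma\rangle$ is a Stein surface with $\pi_1(Y) = \Z/n\Z$ and $\pi_2(Y) = \pi_2(\wt{X}) = \Z$. The standard free-action constraint is decisive: a free action of $\Z/n\Z$ on a homotopy $2$-sphere forces $n \leq 2$, because the induced action on $H_2(S^2;\,\Z) = \Z$ is by $\pm 1$, and a generator acting trivially on $H_2$ would have fixed points by Lefschetz (its Lefschetz number would be $2 \neq 0$), contradicting freeness; so the generator must act by $-1$, which has order exactly $2$. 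Therefore $n = 2$, proving statement (1). For statement (2), once $n = 2$ the quotient $Y = \wt{X}/\langle\sigma\rangle$ is a free $\Z/2\Z$-quotient of a homotopy $S^2$, hence homotopy equivalent to $\mathbb{RP}^2$; I would then compute its reduced integral homology, $\wt{H}_1(\mathbb{RP}^2;\,\Z) = \Z/2\Z$ with all other reduced groups vanishing (the vanishing in degree $\geq 3$ being automatic for a Stein surface by Narasimhan's Theorem, and $H_2 = 0$ following from the homology of $\mathbb{RP}^2$), which is exactly the definition of a Moore $M(\Z/2\Z,1)$-space.

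The main obstacle I anticipate is making the free-action argument fully rigorous at the level of homotopy rather than genuine homeomorphisms: $\wt{X}$ is only homotopy equivalent to $S^2$, not equal to it, so I cannot directly invoke the classical Lefschetz fixed point theorem for the deck transformations (which are honest homeomorphisms of $\wt{X}$, but of a possibly singular analytic space). The clean way around this is to argue entirely homologically --- the deck transformation $\sigma$ acts on $H_*(\wt{X};\,\Z)$, which agrees with $H_*(S^2;\,\Z)$, and the freeness of the action together with the transfer / Euler characteristic relation $e(\wt{X}) = n \cdot e(Y)$ combined with $e(\wt{X}) = e(S^2) = 2$ forces $n \mid 2$. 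This sidesteps fixed-point subtleties and leaves only the identification of the resulting $\Z/2\Z$-quotient's homology, which is routine.
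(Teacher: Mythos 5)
Your treatment of part (1) is sound and, after your own self-correction in the last paragraph, coincides with the paper's argument: the paper also dispatches the main case via the multiplicativity $e(\wt{X})\,=\,n\cdot e(\wt{X}/\langle\sigma\rangle)$ together with $e(\wt{X})\,=\,e(S^2)\,=\,2$, using Giesecke's result \cite{Gie} (surjectivity of $p_\ast$ on rational homology for the finite map $p$) to guarantee the Betti numbers of the quotient are finite — exactly the role your transfer argument plays. You were right to distrust the Lefschetz fixed point theorem here: $\wt{X}$ is a noncompact, possibly singular Stein space only homotopy equivalent to $S^2$, so the divisibility argument is the correct substitute. Your handling of the contractible case is a genuinely different and valid route: the paper again uses Euler characteristics ($e(\wt{X})=1$ forces $|\langle\sigma\rangle|=1$), whereas you pit the group homology of $\Z/n\Z$ against Narasimhan's vanishing (Theorem \ref{Narasimhan's Theorem}) for the Stein quotient; that works, but correct the slip in your computation — $H_{2i}(\Z/n\Z;\,\Z)\,=\,0$ for $i\,\geq\,1$, while it is the \emph{odd}-degree homology that equals $\Z/n\Z$, and the nonvanishing of $H_3$ is what you actually need against Narasimhan (or against Hamm's bound \cite{Ham} on the CW dimension).

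There is, however, a genuine gap in your part (2): the claim that $Y\,:=\,\wt{X}/\langle\sigma\rangle$ is homotopy equivalent to $\R\mathrm{P}^2$ is unjustified, and your integral computation $H_2(Y;\,\Z)\,=\,0$ rests on it. A free involution on a noncompact four-dimensional space that is merely homotopy equivalent to $S^2$ does not obviously have quotient of the homotopy type of $\R\mathrm{P}^2$ — this would require controlling the action on $\pi_2$ \emph{and} the Postnikov data, and it cannot be recovered afterwards from homology alone, since Moore spaces $M(G,\,1)$ are not unique up to homotopy (the paper's Definition \ref{Def: Moore Spaces} records uniqueness only for $n\,\geq\,2$). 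The identification is also unnecessary: from your own Euler characteristic relation, $e(Y)\,=\,1$, and $b_1(Y)\,=\,0$ since $\pi_1(Y)\,=\,\Z/2\Z$ is finite, whence $b_2(Y)\,=\,0$. To upgrade this rational statement to the integral one required by the definition of $M(\Z/2\Z,\,1)$ you must invoke the fact — used by the paper but absent from your proposal — that $Y$ is again a Stein surface (a finite free quotient of a Stein space is Stein) and that the second integral homology of a Stein surface is torsion-free \cite[Korollar, Satz 1]{Ham}; only then does $b_2(Y)\,=\,0$ give $H_2(Y;\,\Z)\,=\,0$. Combined with $H_1(Y;\,\Z)\,=\,\pi_1(Y)^{\mathrm{ab}}\,=\,\Z/2\Z$ and the vanishing above degree $2$ (Hamm's $2$-dimensional CW model, or Theorem \ref{Narasimhan's Theorem}), this yields the Moore space conclusion without any appeal to $\R\mathrm{P}^2$.
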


\begin{proof}
Since $X$ is a Stein space, so is its universal cover $\wt{X}$. As $X$ satisfies the finite homotopy rank-sum property, we know
that $\wt{X}$ is of elliptic homotopy type. Therefore from Theorem \ref{Characterization of Elliptic Homotopy Type Surfaces} it 
follows that either $\wt{X}$ is contractible or $\wt{X}$ is homotopic to $S^2$.

{\it Proof of (1).}\, Suppose $\pi_1(X)$ contains a non-trivial torsion element
$\sigma$. Consider the action of the
cyclic subgroup $H\,:=\,\langle \sigma \rangle\, \subset\, \pi_1(X)$ on $\wt{X}$, and let
$$p\,:\, \wt{X}\,\longrightarrow\,\wt{X}/H$$ be the corresponding quotient map. Since $H$ is a finite group, this complex
analytic map $p$ is finite. Therefore, the induced homomorphism
\begin{equation}\label{ep}
p_{\ast}\,\,:\,\, H_i(\wt{X};\, \Q)\,\longrightarrow\, H_i(\wt{X}/H;\,\Q)
\end{equation}
is surjective for all $i\,>\,0$ (cf. \cite{Gie}).

As the action of $H$ on $\wt{X}$ is free, $p$ is a covering map of degree $|H|$, and hence 
\begin{equation}\label{eqn:covering euler char}
e(\wt{X})\,=\, (\deg p)\cdot e(\wt{X}/H)\,=\, |H|\cdot e(\wt{X}/H),
\end{equation}
where $e(M)$ denotes the Euler characteristic of $M$.

If $\wt{X}$ is contractible then \eqref{ep} implies that $H_i(\wt{X}/H;\,\Q)\,=\, 0$ for all $i\,>\,0$, and hence we have $$e(\wt{X}/H)\,=\,e(\wt{X})\,=\,1.$$ So \eqref{eqn:covering 
euler char} implies that $|H|\,=\,1$. But this contradicts the fact that $\sigma$ is a non-trivial element of $\pi_1(X)$. Thus $\wt{X}$ is homotopic to $S^2$ whence $H_2(\wt{X};\,\Q)\,=\,\Q$ and $H_j(\wt{X};\,\Q)\,=\,0$ for $j \, >\,2$. Note that $e(\wt{X})\,=\,2$. Now \eqref{ep} implies that $b_2(\wt{X}/H)\,\leq\, 1$ and $H_j(\wt{X}/H;\,\Q)\,=\, 0$ for $j \, >\,2$; i.e. $e(\wt{X}/H)\,\leq \, 2$. 
Since $\sigma$ is a non-trivial torsion element, we have $|H|\,>\,1$.
As $e(\wt{X})\,=\,2$, from \eqref{eqn:covering euler char} it follows that $|H|\,=\, \text{order}(\sigma) \,=\,2$ and
$e(\wt{X}/H)\,=\,1$. This completes the proof of (1).

{\it Proof of (2).}\, Let $\sigma$ be a non-trivial torsion element in $\pi_1(X)$ and $H\,:=\,\langle\sigma\rangle \,\subset\,\pi_1(X)$. Consider the universal covering map $p\,:\,\wt{X}\,\longrightarrow\,\wt{X}/H$ as above. Using covering space theory, it follows that $\pi_1(\wt{X}/H)
\,=\,H$. Using the first part, it turns out that $H\,\cong\,\Z/2\Z$. Thus $b_1(\wt{X}/H)\,=\,0$. Since $H_j(\wt{X}/H;\,\Q)\,=\, 0$ for $j \, >\,2$ and $e(\wt{X}/H)\,=\,1$, it follows that 
\begin{equation}\label{f2}
b_2(\wt{X}/H)\,=\,e(\wt{X}/H)\,+\,b_1(\wt{X}/H)\,-\,1\,=\,0.
\end{equation}
Since $\wt{X}$ is a Stein surface, so is its quotient $\wt{X}/H$. Therefore, we note that $H_2(\wt{X}/H;\, \Z)$ is free (cf. \cite[Korollar, Satz 1]{Ham}). Hence from 
\eqref{f2} we conclude that $H_2(\wt{X}/H;\,\Z)\,=\,0$. As $\wt{X}/H$ is a Stein surface, it is homotopic to a CW complex of real dimension at most $2$ \cite[Proposition 2]{Ham}. This implies that $\wt{X}/H$ is a Moore $M(\Z/2\Z,\,1)$-space. This completes the proof of (2).
\end{proof}

\begin{cor}\label{Cor:Finite Fundamental Group of Elliptic Type Affine Surface}
Let $X$ be a complex affine surface satisfying the finite homotopy rank-sum property. Assume that $X$ is not
simply connected, and every element of $\pi_1(X)$ has finite order. Then $\pi_1(X)\,\cong\,\Z/2\Z$ with $e(X)\,=\,1$.
\end{cor}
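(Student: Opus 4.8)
The plan is to derive everything from Theorem~\ref{Fundamental Group of Elliptic Type Stein Surface}, whose hypotheses are exactly met here: a complex affine surface is in particular a complex connected Stein surface, and $X$ satisfies the finite homotopy rank-sum property by assumption. Since we are told $X$ is not simply connected, the group $\pi_1(X)$ is nontrivial, and since \emph{every} element of $\pi_1(X)$ has finite order, we may pick a nontrivial element $\sigma \in \pi_1(X)$, which is then automatically a nontrivial torsion element. First I would invoke part~(1) of Theorem~\ref{Fundamental Group of Elliptic Type Stein Surface} to conclude that every such $\sigma$ has order exactly $2$. This already forces $\pi_1(X)$ to be a nontrivial group in which every element has order dividing $2$, i.e.\ an elementary abelian $2$-group (a vector space over $\F_2$, possibly infinite-dimensional a priori).

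The heart of the argument is upgrading ``every nonidentity element has order $2$'' to ``$\pi_1(X) \cong \Z/2\Z$'', i.e.\ ruling out larger elementary abelian $2$-groups such as $(\Z/2\Z)^2$. The key is part~(2) of Theorem~\ref{Fundamental Group of Elliptic Type Stein Surface}, which says that for \emph{each} nontrivial torsion element $\sigma$, the quotient $\wt{X}/\langle\sigma\rangle$ is a Moore $M(\Z/2\Z,1)$-space, hence in particular $e(\wt{X}/\langle\sigma\rangle) = 1$. The plan is to run the Euler-characteristic bookkeeping from the covering. Since $X$ is affine and not simply connected, its universal cover $\wt{X}$ is homotopic to $S^2$ (the contractible alternative from Theorem~\ref{Characterization of Elliptic Homotopy Type Surfaces} would make $X$ a $K(\pi_1,1)$, but one needs to check this is excluded; in fact the proof of Theorem~\ref{Fundamental Group of Elliptic Type Stein Surface}(1) already shows the contractible case forces $|H|=1$, contradicting nontriviality, so $\wt{X}\simeq S^2$ and $e(\wt{X})=2$). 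Now suppose $|\pi_1(X)| = 2^k$ is finite; then the covering $\wt{X} \to X$ has degree $2^k$, so
\begin{equation*}
e(\wt{X}) \,=\, |\pi_1(X)| \cdot e(X), \qquad \text{giving} \qquad 2 \,=\, 2^k \cdot e(X).
\end{equation*}
Since $e(X)$ is an integer and $X$ is connected and noncompact (being affine), one deduces $2^k \mid 2$, forcing $k = 1$ and $e(X) = 1$.

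The remaining obstacle, and the step I expect to require the most care, is the case where $\pi_1(X)$ could a priori be \emph{infinite} elementary abelian. Here I would argue that $\pi_1(X)$ must be finite. Since $X$ is affine, it is homotopic to a finite CW complex by Hamm's theorem \cite{Ham}, so $\pi_1(X)$ is finitely generated; a finitely generated group in which every element has order $2$ is a finite elementary abelian $2$-group $(\Z/2\Z)^k$. Thus finiteness is free once we record that affine varieties have finitely generated fundamental group. With $\pi_1(X)$ finite, the Euler-characteristic computation above applies directly and pins down $k=1$, so $\pi_1(X) \cong \Z/2\Z$; and the same relation $2 = 2 \cdot e(X)$ yields $e(X) = 1$. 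Alternatively, one can read $e(X) = 1$ off part~(2): the quotient $\wt{X}/\langle\sigma\rangle$ for the unique nontrivial $\sigma$ \emph{is} $X$ itself (as $\langle\sigma\rangle = \pi_1(X)$), and the theorem states this $M(\Z/2\Z,1)$-space has Euler characteristic $1$. This completes the proof.
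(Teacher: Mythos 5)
Your proposal is correct and follows essentially the same route as the paper: Theorem~\ref{Fundamental Group of Elliptic Type Stein Surface}(1) to make $\pi_1(X)$ a nontrivial elementary abelian $2$-group, finite generation of $\pi_1(X)$ (via the finite CW structure on affine varieties), $\wt{X}\simeq S^2$ so that $e(\wt{X})=2$, and the covering relation $2 = |\pi_1(X)|\cdot e(X)$ forcing $\pi_1(X)\cong\Z/2\Z$ and $e(X)=1$. The only cosmetic difference is how the contractible alternative for $\wt{X}$ is excluded --- the paper cites torsion-freeness of the fundamental group of an Eilenberg--MacLane finite complex, while you rerun the Euler-characteristic contradiction from inside the proof of Theorem~\ref{Fundamental Group of Elliptic Type Stein Surface}(1); both are valid, and your framing of part~(2) of that theorem as ``the key'' is decorative, since your actual argument never needs it.
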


\begin{proof}
Since all the elements of $\pi_1(X)$ are of finite order, by Theorem
\ref{Fundamental Group of Elliptic Type Stein Surface} it follows that every element of $\pi_1(X)$ is of order $2$. Therefore,
the fundamental group $\pi_1(X)$ is abelian. 
Since $X$ is an affine variety, it follows that $\pi_1(X)$ is finitely generated. So, using the structure theorem of a finitely generated abelian group, it is deduced that
$\pi_1(X)$ is finite and $$\pi_1(X)\,\cong\, \underbrace{\Z/2\Z \times \Z/2\Z \times \cdots \times \Z/2\Z}_{n\text{-times}}.$$
Thus,
\begin{equation}\label{h2}
|\pi_1(X)|\ =\ 2^n
\end{equation}
for some $n \,\in\, \N$.

Let $\wt{X}$ be a universal cover of $X$. Since $X$ satisfies the finite homotopy rank-sum property, Corollary \ref{Cor: Characterization of 
E-type Stein surfaces} says that either $X$ is an Eilenberg--MacLane $K(\pi_1(X),\,1)$-space or $\wt{X}$ is homotopic to $S^2$. 
Since, by assumption, $\pi_1(X)$ contains a non-trivial torsion element, $X$ cannot be an Eilenberg--MacLane $K(\pi_1(X),\,1)$-space, 
as the fundamental group of an Eilenberg-MacLane $K(\pi,\,1)$ finite CW complex is torsion-free (cf. \cite[Proposition 2.45]{Hat}). Therefore $\wt{X}$ is homotopic to $S^2$, so $e(\wt{X})\,=\,e(S^2)\,=\,2$. Since $\pi_1(X)$ is finite, the 
following holds:
\begin{equation}\label{g}
e(\wt{X})\,=\,|\pi_1(X)|\cdot e(X).
\end{equation}
Now from \eqref{g} and \eqref{h2} we get that $2^ne(X)\,=\,2$.
Consequently, $n\,=\,1$, i.e., $\pi_1(X)\,\cong\,\Z/2\Z$, and $e(X)\,=\,1$. This completes the proof.
\end{proof}
	
\section{Affine surfaces with\, $\Kbar\leq 1$ of elliptic homotopy type}\label{se6}

In this section, we will prove that any smooth affine surface $X$ of non-general type (meaning
its logarithmic Kodaira dimension is at most one) satisfying the finite homotopy rank-sum property is in fact an Eilenberg-MacLane $K(\pi_1(X),\,1)$-space. 
A recent work, \cite{GGH}, classified all smooth affine Eilenberg--MacLane $K(\pi,\,1)$-surfaces of 
non-general type. This classification result is the following.

\begin{theorem}[{\cite[Theorem 5.3, Theorem 5.5, Theorem 5.9]{GGH}}]\label{Result : Classification}
Let $X$ be a non-contractible smooth complex affine $K(G,\,1)$ surface. Then the following statements hold:
\begin{enumerate}
\item If $\Kbar(X)\,=\,-\infty$, then $X$ is an $\A^1$-bundle over a smooth algebraic curve which is not isomorphic to
either $\A^1$ or $\PP^1$.

\item If $\Kbar(X)\,=\,0$, then either $X\,\cong\, \C^*\times \C^*$ or $X \,\cong\, H[-1,\,0,\,-1]$ (described below). 

Moreover, if $X \,\cong\, H[-1,\,0,\,-1]$, then there is a $2$-fold unramified covering of $X$ which is isomorphic to $\C^* \times \C^*$.

\item If $\Kbar(X)\,=\,1$, then $X$ admits a $\C^*$-fibration with all fibers isomorphic to $\C^*$, if taken with
the reduced structure. This
implies that there is a finite unramified covering $\widehat X$ of $X$ such that
$\widehat X$ is a $\C^*$-bundle over a smooth algebraic curve not isomorphic to $\PP^1$.
\end{enumerate}
\end{theorem}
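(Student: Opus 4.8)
The plan is to split the argument according to $\Kbar(X)\in\{-\infty,0,1\}$, but first to isolate one structural fact that drives all three cases. Since $X$ is smooth affine, it is homotopy equivalent to a finite CW complex of real dimension at most $2$ (Hamm), so a non-contractible $K(G,1)$ surface is a \emph{finite-dimensional} Eilenberg--MacLane space; hence $G=\pi_1(X)$ has finite cohomological dimension and is therefore torsion-free (a nontrivial finite cyclic subgroup would force $K(G,1)$ to be infinite-dimensional, cf. \cite[Proposition 2.45]{Hat}). This torsion-freeness is my main tool for controlling multiple fibers. Whenever $X$ admits a fibration over a curve $B$ whose general fiber is simply connected (an $\A^1$-fibration), Lemma \ref{Lem: Gang's Generalization} presents $\pi_1(X)$ as the group $\Gamma$ with torsion generators satisfying $\gamma_i^{m_i}=1$, one for each multiple fiber. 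As an affine base carries at least one point at infinity, the long relation $[\alpha_1,\beta_1]\cdots\gamma_1\cdots\gamma_{r+\ell}=1$ merely eliminates one free generator, so $\Gamma\cong(\Z/m_1)*\cdots*(\Z/m_r)*F$, a free product of cyclic groups; this is torsion-free precisely when $r=0$. Thus an aspherical $X$ has no multiple fibers.

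For $\Kbar(X)=-\infty$ I would invoke the Miyanishi--Sugie structure theorem to obtain an $\A^1$-fibration $f\colon X\to B$; since $X$ is affine and contains no complete curve, $B$ is an affine curve, so $\pi_1(B)$ is free and $B\not\cong\PP^1$. By the previous paragraph there are no multiple fibers, so Lemma \ref{Lem: Gang's Generalization} (with $\pi_1(\A^1)=1$) gives $\pi_1(X)\cong\pi_1(B)$; as $X$ and $B$ are both aspherical with the same fundamental group they are homotopy equivalent, whence $e(X)=e(B)$. On the other hand every degenerate fiber is a disjoint union of affine lines, so additivity of the topological Euler characteristic yields $e(X)=e(B)+\sum_b(n_b-1)$, where $n_b$ is the number of components of the fiber over $b$; comparison with $e(X)=e(B)$ forces $n_b=1$ for all $b$, so $f$ is an $\A^1$-bundle, with $B\not\cong\A^1$ lest $X\cong\A^2$ be contractible. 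The case $\Kbar(X)=1$ follows the same template: the log-Iitaka fibration has general fiber of logarithmic Kodaira dimension $0$, hence $\cong\C^*$, giving a $\C^*$-fibration. Using the recalled description of its singular fibers as $\Gamma\sqcup\Delta$, the asphericity of $X$ (which forbids the extra cycles and $2$-spheres that the reducible type $m_1A_1\cup m_2A_2$ and the $\A^1$-components of $\Delta$ would create) forces every fiber to be a reduced $\C^*$ up to multiplicity; the ramified covering trick (Theorem \ref{Ramified covering trick}) then removes the multiplicities and produces a finite étale cover $\widehat X$ — still affine and aspherical — that is a genuine $\C^*$-bundle over a curve not isomorphic to $\PP^1$.

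The case $\Kbar(X)=0$ is the genuine obstacle, since such surfaces need not be ruled and no single fibration argument applies. Here I would appeal to the classification of smooth affine surfaces of logarithmic Kodaira dimension $0$ (Fujita, Kawamata), which yields a short explicit list described by the dual graph of the boundary in a smooth completion, and then select the aspherical members using the invariants at hand: torsion-freeness of $\pi_1(X)$, the vanishing $e(X)=0$ (forced by a torus- or Klein-bottle-type fundamental group), and the fact that finite étale covers of aspherical spaces are aspherical. The surface $\C^*\times\C^*$ is the universal model, being $K(\Z^2,1)$, and I expect the only other aspherical possibility to be the quotient $H[-1,0,-1]$ whose fundamental group is the Klein-bottle group $\Z\rtimes\Z$; passing to its index-two subgroup $\Z^2$ should give the asserted unramified double cover isomorphic to $\C^*\times\C^*$.

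The hard part will be the bookkeeping inside this $\Kbar=0$ classification: one must match each boundary configuration against the asphericity constraints, compute the relevant homology and fundamental groups so as to discard every candidate carrying a $2$-sphere class (nonzero $\pi_2$) or torsion in $\pi_1$, and then verify directly that $H[-1,0,-1]$ is aspherical and admits the stated double cover. By contrast the $-\infty$ and $1$ cases reduce cleanly to the fibration-plus-torsion-freeness template above, so the whole difficulty of the theorem is concentrated in the finite case analysis for logarithmic Kodaira dimension zero.
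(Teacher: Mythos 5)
First, note that the paper does not prove this statement at all: it is quoted verbatim from \cite[Theorems 5.3, 5.5, 5.9]{GGH}, so the benchmark is GGH's proof, whose machinery this paper partly reproduces in Sections 5--6. Measured against that, your case $\Kbar(X)=-\infty$ is essentially the right argument, but it contains a false step: the deduction ``since $X$ is affine and contains no complete curve, $B$ is an affine curve'' is wrong. An $\A^1$-fibration on an affine surface can perfectly well have base $\PP^1$; the paper's own Example \ref{Ex: E-type affine surfaces of non-general type} exhibits $V=\PP^1\times\PP^1\setminus\Delta$, which is affine (as $\Delta$ is ample) and is an $\A^1$-bundle over $\PP^1$ --- the fibers are affine, so no complete curve arises in the total space. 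The error is repairable inside your own framework: if $B\cong\PP^1$ then Lemma \ref{Lem: Gang's Generalization} exhibits $\pi_1(X)$ as a group generated by torsion elements, so torsion-freeness forces $\pi_1(X)=1$, and an aspherical simply connected $X$ is contractible, contradicting the hypothesis. But as written, your justification for ``$\pi_1(B)$ is free and $B\not\cong\PP^1$'' does not stand.

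The genuine gap is in your treatment of $\Kbar(X)=1$ (and, implicitly, the fibration analysis underlying $\Kbar(X)=0$). Your ``torsion-freeness template'' is powerless for $\C^*$-fibrations: since $\pi_1(\C^*)=\Z\neq 1$, Lemma \ref{Lem: Gang's Generalization} gives only $\Z\to\pi_1(X)\to\Gamma\to(1)$, and torsion elements of $\Gamma$ may lift to infinite-order elements of $\pi_1(X)$ --- which is why the theorem itself \emph{allows} multiple fibers in case (3) and removes them only afterwards by the covering trick. The parenthetical claim that asphericity ``forbids the extra cycles and $2$-spheres'' created by $\A^1$-components and crosses is precisely the hard technical core, not a soft observation: one must pass to the universal cover (an analytic, generally non-algebraic surface), control $\pi_1$ of intermediate covers via Theorem \ref{Nori's Lemma}, and show that a single $\A^1$-component proliferates into infinitely many contractible fiber components upstairs, forcing $b_2(\wt{X})=\rank\,\pi_2(X)=\infty$ --- the content of Lemma \ref{Lem: infinite b_2 for C*-fibration having affine lines in fibers} and Theorem \ref{Surfaces admitting C^*-fibration with pi_2 of finite rank} here, mirroring \cite[Proposition 4.5, Theorem 5.9]{GGH}. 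You also silently skip two prerequisites for your last sentence in that case: excluding $B\cong\PP^1$ (done in the paper by Lemma \ref{Lem: e=0 surface admitting C*-fibration over P1 is never affine}, via $H_3$ and Theorem \ref{Narasimhan's Theorem}) and verifying the hypothesis of Theorem \ref{Ramified covering trick} (enough multiple fibers when the base is rational, cf.\ Lemma \ref{Lem: Lemma (inside)}). Your $\Kbar=0$ outline (filter Kojima's/Fujita's list by asphericity, Euler characteristic and torsion-freeness, then verify $H[-1,0,-1]$ directly) is the correct strategy and matches how Theorem \ref{Thm: Kappa 0, pi_2 finite rank} proceeds, but it is an honest outline rather than a proof. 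So your closing assessment is off: the difficulty is not concentrated in $\Kbar=0$; the $\C^*$-fibration cases require a genuinely homological argument that your sketch only gestures at.
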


\begin{definition}[{Fujita's $H[-1,0,-1]$; cf. \cite[\S 8.5]{Fuj2}}]\label{Def: Fujita's H[-1,0,-1]}\mbox{}
Start with $V_0\,=\,\mathbf{F}_n$ $(n\,\geq\, 1)$, the Hirzebruch surface of degree $n$. Let $M_n$ and $\ol{M}_n$ respectively denote the minimal section of $\mathbf{F}_n$ and a section of the ruling on $\mathbf{F}_n$ with $(\ol{M}_n\,\cdot\, M_n)\,=\,0$.
		
		Let $\ell_0$, $\ell_1$ and $\ell_2$ be three distinct fibers of the ruling on $V_0$. Put $p_i\,:=\,\ell_i \cap \ol{M}_n$, for $i\,=\,1,2$. 
		\begin{figure}[h!]
			$	\begin{tikzcd}[row sep=huge, column sep=huge]
			\FujitahGrid{} & \Fujitamiddle{} \arrow[l, "(\text{blow up})", "\alpha"'] \\
			& \Fujitah{} \arrow[u, "(\text{blow up})", "\beta"']
			\end{tikzcd}
			$
			\caption{Fujita's $H[-1,0,-1]$}
			\label{fig:Fujita's $H[-1,0,-1]$}
		\end{figure}
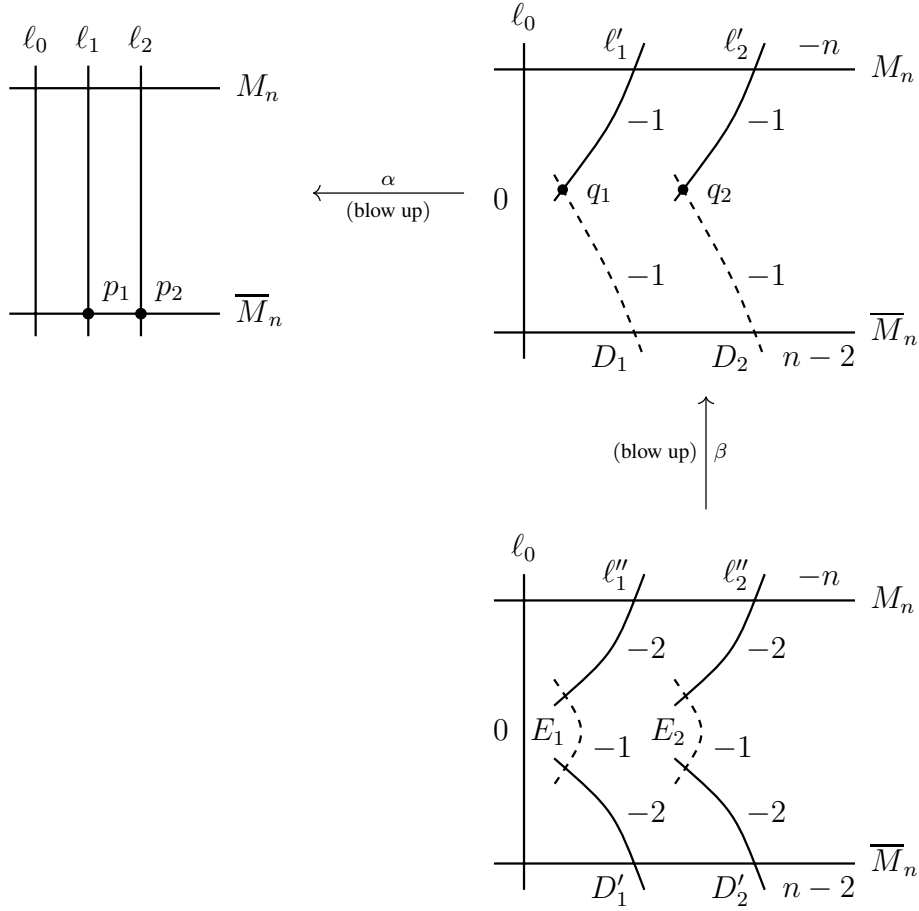
	
		Let $\alpha\,:\, V_1 \,\longrightarrow\, V_0$ be the blowing-up centering at $p_1$ and $p_2$. Let $D_i\,:=\,\alpha^{-1}(p_i)$ and assume $\ell_i'$ denote the proper transform of $\ell_i$ under $\alpha$, for $i\,=\,1,2$. Put $q_i\,:=\,\ell_i' \cap D_i$, for $i\,=\,1,2$. Let $\beta\,:\,V_2\,\longrightarrow\, V_1$ be the blowing-up centering at $q_1$ and $q_2$. Let $E_i\,:=\,\beta^{-1}(q_i)$. Assume that $\ell_i''$ and $D_i'$ respectively denotes the proper transform of $\ell_i'$ and $D_i$ under $\beta$, for $i\,=\,1,2$.
		Let $$D\ :=\ \ell_0\,+\,\ell_1''\,+\,\ell_2''\,+\,M_n\,+\,\ol{M}_n\,+\,D_1'\,+\,D_2',$$i.e., $D$ is a union of the bold-faced curves in $V_2$ in Figure \ref{fig:Fujita's $H[-1,0,-1]$}. Now the affine surface $Y\,:=\,V_2\,\setminus\,D$ is the surface $H[-1,0,-1]$ named by Fujita. 

It can be observed easily that $Y$ has an induced untwisted $\C^*$-fibration over a curve $B$ which is obtained from $\PP^1$ by removing one point, i.e., $B$ is isomorphic to $\A^1$.
\end{definition}

\subsection{Some preparatory results}

\begin{definition}\label{Def: Cross}
Let $C$ be an affine reducible curve having two irreducible components $D_1$ and $D_2$ meeting each other transversally at a point such
that, scheme-theoretically, $D_1$ is isomorphic to $m_1\A^1$, while $D_2$ is isomorphic to $m_2\A^1$, and the two positive integers
$m_1$ and $m_2$ satisfy the condition $\gcd(m_1,\,m_2)\,=\,1$. We call such a curve $C$ a \emph{cross}.
\end{definition}

The following two lemmas are proved in \cite{GGH}:

\begin{lemma}[{\cite[Lemma 4.3]{GGH}}]\label{2nd cohom. with compact support of a cross}
For a cross $C$, the compactly supported cohomology $$H^2_c(C,\, {\mathbb Z})$$ is isomorphic to ${\mathbb Z}\oplus{\mathbb Z}$.
\end{lemma}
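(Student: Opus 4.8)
The plan is to reduce the computation to a purely topological one and then read off $H^2_c(C;\,\Z)$ from the long exact sequence of compactly supported cohomology attached to an open--closed decomposition of $C$. First I would note that compactly supported cohomology with $\Z$-coefficients depends only on the underlying topological space, so the scheme-theoretic multiplicities $m_1,\,m_2$ (and in particular their coprimality) play no role in this lemma: the support of $m_i\A^1$ is simply $\A^1\,\cong\,\C$. Hence, topologically, $C$ is the union of two copies of $\C$ meeting at a single point $p$, say $C\,=\,L_1\cup L_2$ with $L_1\cap L_2\,=\,\{p\}$ and each $L_i\,\cong\,\C$.

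Next I would apply the long exact sequence for compactly supported cohomology associated to the open set $U\,:=\,C\setminus\{p\}$ and its closed complement $Z\,:=\,\{p\}$, namely
$$\cdots\,\longrightarrow\, H^k_c(U;\,\Z)\,\longrightarrow\, H^k_c(C;\,\Z)\,\longrightarrow\, H^k_c(Z;\,\Z)\,\longrightarrow\, H^{k+1}_c(U;\,\Z)\,\longrightarrow\,\cdots.$$
Here $U\,=\,(L_1\setminus\{p\})\sqcup(L_2\setminus\{p\})$ is a disjoint union of two copies of $\C^*\,\cong\,\C\setminus\{0\}$, while $Z$ is a single point, so that $H^k_c(Z;\,\Z)\,=\,\Z$ for $k\,=\,0$ and vanishes otherwise; in particular $H^1_c(Z;\,\Z)\,=\,0\,=\,H^2_c(Z;\,\Z)$.

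The two remaining inputs are immediate. Since $\C^*$ is a connected oriented real $2$-manifold homotopy equivalent to $S^1$, Poincar\'e duality gives $H^2_c(\C^*;\,\Z)\,\cong\, H_0(\C^*;\,\Z)\,=\,\Z$ and $H^1_c(\C^*;\,\Z)\,\cong\, H_1(\C^*;\,\Z)\,=\,\Z$; summing over the two components of $U$ yields $H^2_c(U;\,\Z)\,=\,\Z\oplus\Z$. Feeding this into the segment
$$H^1_c(Z;\,\Z)\,\longrightarrow\, H^2_c(U;\,\Z)\,\longrightarrow\, H^2_c(C;\,\Z)\,\longrightarrow\, H^2_c(Z;\,\Z)$$
and using the vanishing of the two outer terms, I get an isomorphism $H^2_c(C;\,\Z)\,\cong\, H^2_c(U;\,\Z)\,=\,\Z\oplus\Z$, which is the assertion.

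The only delicate points are bookkeeping rather than real obstacles: fixing the variance and direction of the open--closed long exact sequence correctly, and justifying that only the \emph{reduced} topological space enters the $\Z$-coefficient computation. As an independent check one may instead pass to the one-point compactification $C^+$: since the one-point compactification of each line is $S^2$, the space $C^+$ is two copies of $S^2$ glued along the two points $p$ and $\infty$, which is homotopy equivalent to $S^2\vee S^2\vee S^1$, and then $H^2_c(C;\,\Z)\,\cong\,\widetilde{H}^2(C^+;\,\Z)\,=\,\Z\oplus\Z$ recovers the same answer.
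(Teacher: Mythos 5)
Your computation is correct, and there is in fact no in-paper argument to compare it with: the paper imports this statement verbatim from \cite[Lemma 4.3]{GGH} without reproducing a proof, so your argument supplies a complete, self-contained justification. The key points all check out. The scheme-theoretic multiplicities $m_1,\,m_2$ (and their coprimality, which is only needed elsewhere, in the fibration analysis) are indeed invisible to singular cohomology of the underlying analytic space --- a convention the paper itself relies on later, e.g.\ in the proof of Lemma \ref{Lem: infinite b_2 for C*-fibration having affine lines in fibers}, where $H^2_c(A_i)\,\cong\, H_0(\C)\,\cong\,\Z$ is computed for a multiple $\C$. Your open--closed sequence has the correct variance (extension by zero $H^k_c(U)\,\to\, H^k_c(C)$ for $U$ open, restriction $H^k_c(C)\,\to\, H^k_c(Z)$ for $Z$ closed), the outer terms $H^1_c(\{p\})$ and $H^2_c(\{p\})$ vanish as you say, and $H^2_c(\C^*)\,\cong\,\Z$ follows either from Poincar\'e duality as you argue or directly from $\C^*\,\simeq\, S^1\times\R$ together with $H^k_c(X\times\R)\,\cong\, H^{k-1}_c(X)$. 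Your cross-check via the one-point compactification is also sound: $C^+$ is two $2$-spheres glued at the two points $p$ and $\infty$, hence homotopy equivalent to $S^2\vee S^2\vee S^1$, and $H^2_c(C;\,\Z)\,\cong\,\widetilde{H}^2(C^+;\,\Z)\,=\,\Z\oplus\Z$; this is arguably the quickest route and additionally recovers $H^1_c(C;\,\Z)\,\cong\,\Z$ for free.
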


\begin{lemma}[{\cite[Lemma 4.4]{GGH}}]\label{Lem: infinite b_2}
Let $f\,:\, X\, \longrightarrow\,B$ be a complex analytic $\C$-fibration from a smooth connected complex analytic
surface to a connected Riemann surface. Suppose at least one of the following two holds:
\begin{enumerate}
\item There is one singular fiber of $f$ that contains an infinite disjoint union of either crosses or analytic curves with reduced structure isomorphic to $\C$.

\item There are infinitely many singular fibers each consisting of curves with reduced structure isomorphic to $\C$ or a cross.
\end{enumerate}
Then $b_2(X)$ is infinite.
\end{lemma}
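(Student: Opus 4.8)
Since $X$ is a smooth, hence oriented, real $4$--manifold, Poincaré duality gives $H_2(X;\Q)\cong H^2_c(X;\Q)$, so that $b_2(X)=\dim_\Q H^2_c(X;\Q)$. The plan is therefore to prove that $H^2_c(X;\Q)$ is infinite-dimensional, and I would do this by first localizing the computation over the base $B$ and then reassembling the local contributions. Note in particular that a single reduced copy of $\C$ in a fibre will turn out to contribute nothing on its own, so the hypotheses forcing a \emph{cross} (two branches) or infinitely many pieces are essential, and Lemma \ref{2nd cohom. with compact support of a cross} enters exactly here.

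The local computation is the heart of the matter. Fix a singular value $b\in B$, let $\Delta$ be a small analytic disk around $b$, and set $W=f^{-1}(\Delta)$ and $W^\ast=f^{-1}(\Delta\setminus\{b\})$. Over the punctured disk the map $f$ is a fibre bundle with contractible fibre $\C$, so $W^\ast$ is homotopy equivalent to $\Delta\setminus\{b\}\simeq S^1$; by Poincaré duality on the $4$--manifold $W^\ast$ this gives $H^2_c(W^\ast;\Q)=0$ and $H^3_c(W^\ast;\Q)\cong\Q$. The long exact sequence of compactly supported cohomology for the closed subset $F_b=f^{-1}(b)\subset W$ with open complement $W^\ast$ then reads $0\to H^2_c(W;\Q)\to H^2_c(F_b;\Q)\xrightarrow{\ \delta_b\ }H^3_c(W^\ast;\Q)\cong\Q$, whence $\dim H^2_c(W;\Q)\ge \dim H^2_c(F_b;\Q)-1$. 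Finally, since the crosses or copies of $\C$ inside $F_b$ are closed in the real two-dimensional space $F_b$, restriction gives a surjection of $H^2_c(F_b;\Q)$ onto the direct sum of the $H^2_c$ of these pieces (the open complement contributing nothing in degree $3$); using Lemma \ref{2nd cohom. with compact support of a cross}, which gives $\dim H^2_c(\mathrm{cross};\Q)=2$, together with $\dim H^2_c(\C;\Q)=1$, I conclude $\dim H^2_c(F_b;\Q)\ge 2$ whenever $F_b$ contains a cross (case (2)) and $\dim H^2_c(F_b;\Q)=\infty$ in case (1). Thus $H^2_c(W;\Q)\neq 0$ for each cross-carrying singular fibre, and is already infinite-dimensional in case (1).

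To globalize I would choose pairwise disjoint disks $\Delta_b$ around infinitely many of the relevant singular values, set $W_b=f^{-1}(\Delta_b)$, and compare $H^2_c(X;\Q)$ with $\bigoplus_b H^2_c(W_b;\Q)$. The clean way to organize this is the Leray spectral sequence for $f$ with compact supports, $E_2^{p,q}=H^p_c(B;R^qf_!\,\Q)\Rightarrow H^{p+q}_c(X;\Q)$. Here $R^0f_!\,\Q=0$ and $R^1f_!\,\Q$ is supported on the singular values (the general fibre $\C$ being noncompact with $H^0_c=H^1_c=0$), so the only contribution in total degree $2$ is $E_2^{0,2}=H^0_c(B;R^2f_!\,\Q)$; moreover the monodromy on $H^2_c(\C)\cong\Q$ is trivial, since a holomorphic, orientation preserving automorphism of $\C$ fixes the orientation class, so $R^2f_!\,\Q$ restricts to the constant sheaf $\Q$ on the smooth locus $B^\circ$. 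The sheaf sequence $0\to j_!\Q\to R^2f_!\,\Q\to \mathcal Q\to 0$, with $j\colon B^\circ\hookrightarrow B$ and $\mathcal Q$ the skyscraper with stalks $H^2_c(F_b;\Q)$, then identifies $H^2_c(X;\Q)$ with the kernel of a connecting map $\bigoplus_b H^2_c(F_b;\Q)\to H^1_c(B^\circ;\Q)$; this map is exactly $\bigoplus_b\delta_b$ followed by the inclusion of the loop classes around the punctures. Since loops around distinct punctures are linearly independent in $H_1(B^\circ;\Q)$ when there are infinitely many of them, the kernel equals $\bigoplus_b H^2_c(W_b;\Q)$, which is infinite-dimensional by the local computation; in case (1) a single summand $H^2_c(W_{b_0};\Q)$ is already infinite-dimensional. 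Either way $b_2(X)=\dim H^2_c(X;\Q)=\infty$.

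The main obstacle is precisely this globalization, i.e.\ showing that the nonzero local classes in the various $H^2_c(W_b;\Q)$ remain independent in $H^2_c(X;\Q)$ rather than being cancelled against the cohomology of the complement. Two points require care: first, that the connecting map really lands in the loop-classes of the punctures and that these are independent (which is where infinitely many singular fibres, or infinitely many pieces in one fibre, is used decisively); and second, the separation step itself, namely choosing the disjoint disks $\Delta_b$, which needs the chosen singular values to form a discrete subfamily of $B$ admitting pairwise disjoint neighbourhoods. By contrast the local inputs---the $S^1$--homotopy type of $W^\ast$, Lemma \ref{2nd cohom. with compact support of a cross}, and the triviality of the monodromy on $H^2_c(\C)$---are straightforward, so the work lies entirely in turning ``each singular fibre with a cross contributes at least one dimension'' into ``$H^2_c(X;\Q)$ is infinite-dimensional''.
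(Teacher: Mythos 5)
Your proposal is correct in substance, and its globalization step is genuinely different from the method behind this lemma. Note first that the paper does not reprove the statement --- it is quoted from \cite{GGH} --- but the intended method is displayed in the proof of the $\C^*$-analogue, Lemma \ref{Lem: infinite b_2 for C*-fibration having affine lines in fibers}. Your local analysis agrees with it exactly: the exact sequence $0\to H^2_c(W;\Q)\to H^2_c(F_b;\Q)\xrightarrow{\delta_b}H^3_c(W^*;\Q)\cong\Q$, duality on the open $4$-manifold, and Lemma \ref{2nd cohom. with compact support of a cross} are precisely the ingredients of Case 1 there. Where you diverge is in reassembling the local contributions: the paper glues the neighbourhood of the bad fiber into $X$ by an elementary Mayer--Vietoris argument in case (1) (the overlap has $H_2$ of rank at most one), and in case (2) exhausts the base by closed squares, uses Mayer--Vietoris injectivity (the overlaps are fiber bundles over contractible sets, with vanishing $H_2$), the divergence of Euler characteristics together with $b_4=0$ on the noncompact pieces, and the fact that homology commutes with direct limits. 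You instead run the compactly supported Leray spectral sequence and the sequence $0\to j_!\Q\to R^2f_!\,\Q\to\mathcal{Q}\to 0$, identifying $H^2_c(X;\Q)$ with the kernel of the connecting map $\Phi\colon\bigoplus_b H^2_c(F_b;\Q)\to H^1_c(B^\circ;\Q)$. This is slicker: it treats both cases uniformly, avoids the square-covering combinatorics and any passage to a universal cover, and works over an arbitrary (necessarily noncompact) base; the price is heavier machinery (Leray with compact supports, the stalk formula for $R^qf_!$) and the local--global compatibility you yourself flag as the main obstacle.

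That obstacle can in fact be removed, which also repairs the one soft spot in your write-up. By naturality of the compactly supported long exact sequence under the extension maps along open inclusions $\Delta_b\setminus\{b\}\hookrightarrow B^\circ$, the restriction $\Phi|_b$ of $\Phi$ to the summand $H^2_c(F_b;\Q)$ factors through $H^1_c(\Delta_b\setminus\{b\};\Q)\cong\Q$, so its image is at most one-dimensional; since always $\ker\Phi\supseteq\bigoplus_b\ker(\Phi|_b)$, each relevant $b$ contributes at least $\dim H^2_c(F_b;\Q)-1$ independent classes, and you get infinite dimensionality in both cases without ever discussing linear independence of loop classes in $H_1(B^\circ;\Q)$ --- the containment suffices, equality of the kernel is a luxury. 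Three smaller points: in case (2) you only invoke cross-carrying fibers, but the hypothesis also allows fibers consisting of two or more disjoint lines with no cross; the identical computation gives $\dim H^2_c(F_b;\Q)\geq 2$ there, so one sentence fixes this. Your parenthetical observation that a single (possibly multiple) line contributes nothing is genuinely correct --- for a fiber $m\C$ the local form $t=u^m$ of $f$ shows $\delta_b$ is multiplication by $\pm m$, hence injective --- which confirms that the statement must be read, as in part (2) of Lemma \ref{Lem: infinite b_2 for C*-fibration having affine lines in fibers}, as requiring a cross or at least two components in each of the infinitely many fibers. The monodromy homeomorphisms of the bundle over $B^\circ$ are not holomorphic, so justify the triviality of $R^2f_!\,\Q|_{B^\circ}$ by noting that the fiber orientations are induced by the complex structures on $X$ and $B$ and vary continuously, whence monodromy preserves the generator of $H^2_c(\C;\Q)$. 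Finally, the discreteness of the set of singular values, which you need for the disks and for the skyscraper description, is tacitly assumed in the paper's own arguments as well and holds in all the applications (the singular values there are preimages of a finite set under a covering map), so flagging it as a standing hypothesis is the right call rather than a defect.
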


Now we prove a result in a set-up similar to that of Lemma \ref{Lem: infinite b_2}. The main ideas of the proof of the 
following lemma are not very different from those in \cite[Lemma 4.4, Proposition 4.5]{GGH}. But systematic changes are needed
in the arguments of \cite[Lemma 4.4, Proposition 4.5]{GGH}.

\begin{lemma}\label{Lem: infinite b_2 for C*-fibration having affine lines in fibers}
Let $f\,:\, X\, \longrightarrow\,B$ be a complex analytic $\C^*$-fibration, from a smooth connected complex
analytic surface to a connected Riemann surface, with no multiple fiber. Then
the following two statements hold:
\begin{enumerate}
\item If there is one fiber of $f$ containing infinitely many disjoint crosses or an infinite disjoint union of analytic
curves with reduced structure isomorphic to $\C$, then both $b_2(X)$ and $\rank(\pi_2(X))$ are infinite.

\item If $B$ is not isomorphic to $\PP^1$, and there are infinitely many fibers of $f$ each containing either at least two curves
with reduced structure isomorphic to $\C$ or a cross, then $\rank (\pi_2(X))$ is infinite. 
\end{enumerate}
\end{lemma}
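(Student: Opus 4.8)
The guiding principle is that the fibre $\C^*$ is a $K(\Z,1)$, so $\pi_2(X)$ is produced entirely by the degenerate (cross or affine-line) components of the fibres and is only seen after the circle in the fibre is unwound by passing to a cover. Accordingly I would prove the two statements as refinements of Lemma \ref{Lem: infinite b_2}: for $b_2$ I argue directly on $X$, whereas for $\rank(\pi_2(X))$ I pass to the universal cover $p\colon\wt X\to X$ and use that $\rank(\pi_2(X))=\rank(\pi_2(\wt X))=b_2(\wt X)$, the last equality by the Hurewicz isomorphism (Theorem \ref{Hurewicz Theorem}) for the simply connected Stein surface $\wt X$. Thus in every case the task reduces to showing that the second Betti number of an appropriate smooth Stein surface is infinite.

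For the assertion that $b_2(X)$ is infinite in (1), I would localise around the distinguished fibre $F_0=f^{-1}(b_0)$ carrying the infinitely many pairwise disjoint crosses or copies of $\C$, following the proof of Lemma \ref{Lem: infinite b_2} but with the fibre $\C^*$ in place of $\C$. Choosing a small disc $\Delta\subset B$ around $b_0$ over which $F_0$ is the only degenerate fibre and setting $W=f^{-1}(\Delta)$, the complement $W\setminus F_0=f^{-1}(\Delta\setminus\{b_0\})$ is a $\C^*$-bundle over a punctured disc and so has the homotopy type of a finite complex; in particular its rational compactly supported cohomology is finite-dimensional. The long exact sequence in compactly supported cohomology for $F_0\subset W$ shows that $H^2_c(W;\Q)$ surjects onto the kernel of $H^2_c(F_0;\Q)\to H^3_c(W\setminus F_0;\Q)$, and since each cross contributes a copy of $\Q^2$ and each line a copy of $\Q$ to $H^2_c(F_0;\Q)$ by Lemma \ref{2nd cohom. with compact support of a cross}, this kernel---hence $H^2_c(W;\Q)$---is infinite-dimensional. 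A Mayer--Vietoris argument for the open cover $X=W\cup f^{-1}(B\setminus\{b_0\})$, whose overlap is the finite-topology space $f^{-1}(\Delta\setminus\{b_0\})$, propagates this to $H^2_c(X;\Q)$, and Poincar\'e duality on the oriented $4$-manifold $X$ gives $b_2(X)=\dim_\Q H^2_c(X;\Q)=\infty$.

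To obtain the statements on $\rank(\pi_2(X))$ I would run the same local computation on $\wt X$. Since a cross and a copy of $\C$ are simply connected, each lifts isomorphically to $\wt X$, so in case (1) a single fibre of $f\circ p$ again carries infinitely many disjoint crosses or lines, while in case (2) the hypothesis $B\not\cong\PP^1$ makes $B$ and its smooth locus aspherical, whence the homotopy exact sequence forces the fibre class $\pi_1(\C^*)=\Z$ to have infinite order in $\pi_1(X)$ and the infinitely many degenerate fibres of $f$ to lift to infinitely many degenerate fibres upstairs. In each case the local model over a punctured disc on $\wt X$ is a covering of a $\C^*$- (or $\C$-) bundle over $\Delta\setminus\{b_0\}$, whose fundamental group is an extension of $\Z$ by $\Z$; as every covering of such a $K(\pi,1)$ still has finite-dimensional rational compactly supported cohomology, the finiteness input needed above persists on $\wt X$, and the argument of the previous paragraph (for (1)) or of Lemma \ref{Lem: infinite b_2}(2) (for (2)) yields $b_2(\wt X)=\rank(\pi_2(X))=\infty$.

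The step I expect to be the main obstacle is precisely this covering bookkeeping on $\wt X$: the preimage of the local model is in general disconnected with infinitely many components and the cover is of infinite degree, so one must argue that the infinitely many classes carried by the lifted degenerate components genuinely survive in $H^2_c(\wt X;\Q)$ rather than being absorbed. This is handled by the fact that compactly supported cohomology of a disjoint union is a direct sum---so the contributions of distinct local pieces add without cancellation---together with the smallness (virtually $\Z^2$) of the local fundamental group, which keeps each piece's rational cohomology finite-dimensional. This same mechanism explains why no claim about $b_2(X)$ itself is made in (2): the bounded contribution of each degenerate fibre to $H_2(X;\Q)$ can cancel against the monodromy of the circle in the fibre, a cancellation that is undone only after passing to the universal cover, where the fibre is unwound to $\C$.
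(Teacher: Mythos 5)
Your first computation---the compactly supported cohomology exact sequence for $F_0\subset W$, followed by Mayer--Vietoris and Poincar\'e duality, giving $b_2(X)=\infty$ in statement (1)---is correct and is, up to dualizing from homology to $H^\ast_c$, exactly the paper's argument. The genuine gap lies in the passage to the universal cover, i.e.\ in everything you claim about $\rank(\pi_2(X))$. You work with the composite $f\circ p\colon \wt{X}\to B$ and assert that the covering bookkeeping is handled by the direct-sum behaviour of $H^\ast_c$ together with the claim that every covering of the local $K(\Z^2,1)$ has finite-dimensional rational compactly supported cohomology. That claim is false in precisely the situation you cannot exclude: if the image $\Gamma$ of $\pi_1\bigl((\Delta\setminus\{b_0\})\times\C^*\bigr)\cong\Z^2$ in $\pi_1(X)$ is finite while $\pi_1(X)$ is infinite, then $p^{-1}(W\setminus F_0)$ has infinitely many components, each a finite covering of a homotopy torus, so its $H_2$ (equivalently its $H^2_c$) has infinite rank. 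The Mayer--Vietoris kernel is then no longer of rank $\leq 1$, and the infinitely many classes you produce in $H^2_c(p^{-1}(W))$ can a priori be absorbed; the direct-sum remark shows the local contribution is large, not that it survives into $H_2(\wt{X})$. The paper forecloses this scenario before it arises: it first forms $X'=\wt{B}\times_B X$ and applies Nori's lemma (Theorem \ref{Nori's Lemma}) to conclude that $\pi_1(X')$ is cyclic---finite or $\Z$---so that $\wt{f}\colon\wt{X}\to\wt{B}$ is again an honest $\C^*$- or $\C$-fibration with \emph{connected} general fiber; localizing over a single disc in $\wt{B}$ then reproduces a connected overlap of rank one and the Case-(1) argument runs verbatim (or Lemma \ref{Lem: infinite b_2} applies in the $\C$-fibration case). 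Your proposal has no substitute for this step.

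The same omission is fatal in statement (2). Your argument that asphericity of $B$ forces the fiber class to have infinite order in $\pi_1(X)$ invokes a homotopy exact sequence that does not exist: by hypothesis $f$ has infinitely many degenerate fibers, so it is not a fiber bundle, and the image of $\pi_1(\C^*)\to\pi_1(X)$ can perfectly well be finite. This is exactly the possibility that $\pi_1(X')$ is finite, which the paper must and does treat as the essential case---indeed, in the application in Theorem \ref{Surfaces admitting C^*-fibration with pi_2 of finite rank}, the cases $\pi_1(X')$ trivial and $\Z/n\Z$ are live options that are ruled out \emph{using} this lemma, so the lemma's proof cannot presuppose they do not occur. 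When the fiber class has finite order, the fiber does \emph{not} unwind to $\C$ in any cover, Lemma \ref{Lem: infinite b_2} is inapplicable, and a different mechanism is required. The paper supplies one: take $\wt{B}\cong\R^2$ (uniformization, using $B\not\cong\PP^1$), exhaust it by closed squares $U_1,U_2,\dots$ so that each intersection $(U_1\cup\cdots\cup U_k)\cap U_{k+1}$ is compact and contractible, deduce from $H_2(\C^*)=0$ that $H_2(V_1\cup\cdots\cup V_k)\oplus H_2(V_{k+1})$ injects into $H_2(V_1\cup\cdots\cup V_{k+1})$, and then force $b_2(\wt{X})=\infty$ by an Euler-characteristic count (a cross and an affine line have positive Euler characteristic, while $b_4=0$ for the noncompact $4$-manifold $V_1\cup\cdots\cup V_k$) together with a direct limit. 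Your proposal contains no counterpart to this exhaustion argument, so statement (2)---and with it the $\rank(\pi_2)$ half of statement (1)---remains unproved.
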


\begin{proof}
Let $\pi\,:\,\wt{B}\,\, \longrightarrow\, \,B$ be a universal covering. Let
\begin{equation}\label{j2}
f'\,\,:\,\, X'\,\,:=\,\, \wt{B}\times_B X\,\, \longrightarrow\,\,\wt{B}
\end{equation}
be the pullback of the fibration $f$ via $\pi$. Like $f$, the analytic map $f'$ is a $\C^*$-fibration with no
multiple fibers. Applying Theorem \ref{Nori's Lemma} to
$f'$ in \eqref{j2} it is deduced that either $\pi_1(X')$ is a finite cyclic group or $\pi_1(X')$ is isomorphic to $\Z$. Let
$$
\pi'\,\, :\,\, \wt{X}\, \,\longrightarrow\,\, X'
$$
be the universal covering. Clearly, $\wt{X}$ is a universal cover of $X$ as well. Let
\begin{equation}\label{j8}
\wt{f}\,\,:=\,\, f'\circ \pi'\,:\, \wt{X}\,\, \longrightarrow\,\,\wt{B}.
\end{equation}
Then $\wt{f}$ is an $\wt{F}$-fibration, where
\begin{itemize}
\item $\wt{F}\,\cong\, \C^*$ when $\pi_1(X')$ is finite, and

\item $\wt{F}\,\cong\, \C$ when $\pi_1(X')\,\cong\, \Z$.
\end{itemize}
This is because $\wt{F}$ is a unramified covering of $\C^*$ whose degree coincides with the order of $\pi_1(X')$.

{\bf Case A.}\, \emph{Assume that $\pi_1(X')$ is infinite.}

As observed above, in this case $\wt{f}$ is an analytic $\C$-fibration. Note that if $f$ satisfies the assumption in (1) (respectively,
(2)) in the statement of this lemma, then $\wt{f}$ also the assumption in (1) (respectively,
(2)) in the statement of this lemma. Using Theorem \ref{Hurewicz Theorem}, $$\rank (\pi_2(X))\,=\,\rank(\pi_2(\wt{X}))\,=\,b_2(\wt{X}).$$
Since $b_2(\wt{X})$ is infinite by Lemma \ref{Lem: infinite b_2}, this implies that $\rank (\pi_2(X))$ is infinite.

{\bf Case 1.}\, \emph{Assume that there is a fiber of $f$ containing infinitely many disjoint union of either crosses or analytic curves with
reduced structure isomorphic to $\C$ (the assumption in {\rm(1)} in the statement of this lemma).}

Let $p\,\in\, B$ be a point such that the fiber $F\,:=\,f^{-1}(p)$ contains either infinitely many disjoint crosses or
an infinite disjoint union of analytic curves with reduced structure isomorphic to $\C$. Express $F$ as a disjoint union
$$F\,=\, \Gamma \sqcup \Delta ,$$ where 
\begin{enumerate}
\item[(a)] $\Gamma$ is a disjoint union of crosses $\{C_{j}\, \big\vert\, \,j \,\in\, J\}$, with $J$
being an index set ($J$ can also be empty), and

\item[(b)] $\Delta$ is a disjoint union of curves $\{A_{i}\,\big\vert\, i \,\in \,I\}$ such that each
$A_{i}$ is isomorphic to a multiple $\C$; here $I$ is an index set ($I$ can also be empty).
\end{enumerate}
It follows from the hypothesis that one of the above indexing sets $I$ and $J$ must be infinite. Let $D$ be a suitable small
open Euclidean real $2$-disc in $B$ around the point $p$ such that $$f|_{V\setminus F}\,\,:\,\, V\setminus F \,\, \longrightarrow
\,\,D\setminus \{p\}$$ is a trivial $\C^*$-bundle, where $V\,:=\,f^{-1}(D)$. Therefore, we have $V\setminus F\,\cong
\, (D\setminus \{p\})\times \C^*$. Consider the following part of the long exact sequence of cohomologies with compact support for
the pair $(V,\,F)$:
\begin{equation}\label{j6}
\begin{tikzcd}[row sep=small, column sep=small]
	{H^2_c(V)} && {H^2_c(F)} && {H^3_c(V,\,F)} \\
	& {\ker(\delta)} \\
	\arrow[two heads, from=1-1, to=2-2]
	\arrow[hook, from=2-2, to=1-3]
	\arrow[from=1-1, to=1-3]
	\arrow["\delta", from=1-3, to=1-5]
\end{tikzcd}
\end{equation}
Using duality, we have $H^3_c(V,\,F) \,\cong\, H_1(V\setminus F)\,\cong\, H_1(D\setminus \{p\})\oplus H_1(\C^*)
\,\cong\, \Z\oplus \Z$ and
\begin{equation}\label{j5}
H^2_c(F)\,\cong\, 
\left(\bigoplus\limits_{i \in I}H^2_c(A_{i})\right)\bigoplus\left(\bigoplus\limits_{j \in J}H^2_c(C_{j})\right).
\end{equation}
By duality, $H^2_c(A_{i})\,\cong\, H_0(A_{i})\,\cong\, H_0(\C)\,\cong\, \Z$. Lemma \ref{2nd cohom. with compact support of a cross}
implies that $H^2_c(C_{j})\,\cong\, \Z \oplus \Z$ for all $j \,\in \,J$. Since one of the indexing sets $I$ and $J$ is infinite, it
follows from \eqref{j5}
that $H^2_c(F)$ is a free abelian group of infinite rank. Therefore, $\ker(\delta)$
in \eqref{j6} is a free abelian group of infinite rank. So $H^2_c(V)$ is of infinite rank as 
$H^2_c(V)$ surjects onto $\ker(\delta)$. Hence using the duality again we conclude that $H_2(V)$ is of infinite rank, i.e.,
$b_2(V)$ is infinite.

Set $W\,:=\,X\setminus F$. Thus the pair of open subsets $\{V,\, W\}$ gives a Euclidean open covering of $X$. Consider
the following part of the Mayer-Vietoris sequence applied to the open covering $\{V, \,W\}$ of $X$:
\begin{equation}\label{j7}
\begin{tikzcd}[row sep=small, column sep=small]
&&& {\Ima(\mu)}\\
{H_2(V\cap W)} && {H_2(V)\oplus H_2(W)} && {H_2(X)} \\
& {\ker(\mu)} \\
\arrow[two heads, from=2-1, to=3-2]
\arrow[hook, from=3-2, to=2-3]
\arrow[from=2-1, to=2-3]
\arrow["\mu", from=2-3, to=2-5]
\arrow[two heads, from=2-3, to=1-4]
\arrow[hook, from=1-4, to=2-5]
\end{tikzcd}
\end{equation}
Now by K{\"u}nneth formula, $$H_2(V\cap W)\,=\,H_2(V\setminus F)\,=\,H_1(D\setminus \{p\})\otimes_{\Z}H_1(\C^*)
\,\cong\,\Z\otimes_{\Z}\Z\,\cong\, \Z.$$ Thus from \eqref{j7} we have $\rank(\ker(\mu))\,\leq\, 1$. Since $b_2(V)$ is infinite,
from \eqref{j7} it follows that $\rank(\Ima(\mu))$ is infinite and consequently $H_2(X)$ is of infinite rank by \eqref{j7}.

Now consider $\wt{f}$ in \eqref{j8}. Observe that $\wt{f}$ has one fiber containing infinitely many disjoint crosses or an infinite disjoint union of
analytic curves with reduced structure isomorphic to $\C$ since $f$ has one such fiber by our assumption in this case. Note that, $\wt{f}$ is a $\C^*$-fibration
(respectively, $\C$-fibration) whenever $\pi_1(X')$ is finite (respectively, infinite). 
\begin{itemize}
\item If $\wt{f}$ happens to be a $\C^*$-fibration then it satisfies the condition (1) in the statement of this lemma, then by what we did above (in this case) we can write once again a similar Mayer-Vietoris sequence as in \eqref{j7} applied to an open covering of $\wt{X}$ and it turns out finally that $b_2(\wt{X})$ is infinite. 

\item Next, if $\wt{f}$ happens to be a $\C$-fibration then it satisfies the condition (1) in the statement of Lemma \ref{Lem: infinite b_2} and therefore it follows that $b_2(\wt{X})$ is infinite.
\end{itemize}

So in either case, we prove that $b_2(\wt{X})$ is infinite. Using Theorem \ref{Hurewicz Theorem}, $$\rank(\pi_2(X))\,
=\,\rank(\pi_2(\wt{X}))\,=\,b_2(\wt{X}),$$ and hence $\rank(\pi_2(X))$ is infinite, because $b_2(\wt{X})$ is infinite.

{\bf Case 2.}\, \emph{Assume that $B$ is not isomorphic to $\PP^1$, and there are infinitely many fibers of $f$ each containing either
at least two curves with reduced structure isomorphic to $\C$ or a cross (the assumption in {\rm(2)} in the statement of this lemma).}

In view of Case A, we assume that $\pi_1(X')$ is finite. Therefore, $\wt{f}$ in \eqref{j8} is a $\C^*$-fibration. The map $\wt{f}$
satisfies the condition in statement (2) because $f$ does so. Let
\begin{equation}\label{j9}
\wt{S}\,:=\,\{q_1,\, q_2,\, \cdots\}\, \subset\, \wt{B}
\end{equation}
be the infinite subset of $\wt{B}$ such that each fiber $\wt{f}^{-1}(q_i)$ either contains a cross or contains at least two disjoint 
curves each isomorphic to an affine line. Since $B$ is not isomorphic to $\PP^1$, the uniformization theorem implies that $\wt{B}$ is 
homeomorphic to $\R^2$. So in the rest of the proof, we will take $\wt{B}$ to be $\R^2$.

We can find a suitable covering $\{U_i\,\,\big\vert\,\, i \,\in \,\N\}$
of $\R^2$ by \textit{closed} squares whose sides are parallel to the coordinate axes in $\R^2$ satisfying the following conditions:
\begin{enumerate}
\item[(a)] For $i \,\neq\, j$, the intersection $U_i\cap U_j$ is either empty, or one point, or one of the sides that is common to both $U_i$ 
and $U_j$.

\item[(b)] Any $q_i$ in \eqref{j9} is contained in the interior of some $U_j$. For any $k \,\in\, \N$, if $U_k\cap\wt{S} \,=\,\emptyset$, 
then the fibration $f'$ in \eqref{j2} is topologically trivial over $U_k$. Clearly, any $q_i$ is contained in the interior of at most one 
$U_j$.

\item[(c)] Any $U_i$ contains at most one $q_j$ in its interior.

\item[(d)] For every $k\,\in\, \N$, the intersection $(U_1 \cup U_2 \cup \cdots \cup U_k) \cap U_{k+1}$ is either a side of $U_{k+1}$ or a 
union of two sides of $U_{k+1}$, meeting at one of their end points. In particular, for every $k\,\in\, \N$, the intersection $(U_1 \cup U_2 
\cup \cdots \cup U_k) \cap U_{k+1}$ is a non-empty, compact and contractible subset of $\R^2$ such that $\wt{f}$ is a locally trivial 
$\C^*$-bundle over this intersection.
\end{enumerate}
Let $V_i\,:=\, \wt{f}^{-1}(U_i)$ for all $i\,\in\, \N$; thus $\{V_i\,\big\vert\, i \,\in\, \N\}$ is a covering of $\wt{X}$. Take
a positive integer $k$. Consider the following part of the Mayer-Vietoris sequence:
$$H_2((V_1 \cup V_2 \cup \cdots \cup V_k) \cap V_{k+1})\,\longrightarrow\,
H_2(V_1\cup\cdots\cup V_k) \oplus H_2(V_{k+1})
$$
$$
\,\longrightarrow\, H_2(V_1\cup\cdots\cup V_k\cup V_{k+1}).$$ From (d) above it follows that
$(V_1 \cup V_2 \cup \cdots \cup V_k) \cap V_{k+1}$ is actually a $\C^*$-bundle over
$(U_1 \cup U_2 \cup \cdots \cup U_k) \cap U_{k+1}$. Since the intersection $(U_1 \cup U_2 \cup \cdots \cup U_k) \cap U_{k+1}$ is
contractible, therefore $(V_1 \cup V_2 \cup \cdots \cup V_k) \cap V_{k+1}$ contracts to a fiber that is $\C^*$.
As $H_2(\C^*)\,=\,0$, it follows that $H_2((V_1 \cup V_2 \cup \cdots \cup V_k) \cap V_{k+1})$ is also trivial. Therefore, the
homomorphism $$H_2(V_1\cup\cdots\cup V_k) \oplus H_2(V_{k+1})\,\longrightarrow\, H_2(V_1\cup\cdots\cup V_k\cup V_{k+1})$$ is injective.

Since $\{U_i\,\big\vert\, i \,\in \,\N\}$ covers the entire $\R^2$, for a sufficiently large $k\,\geq\, 1$, the intersection
$(U_1 \cup U_2 \cup \cdots \cup U_k)\cap \wt{S}$ is non-empty. Again since $\wt{S}$ is infinite, it follows from (c) that
as $k$ increases the number of points in the above intersection eventually increases. As the Euler characteristic
of a cross or an affine line is 
positive, and $\wt{S}$ is an infinite set, using the (finite) additivity property of Euler characteristic we conclude that 
$e(V_1\cup\cdots\cup V_k)$ tends to infinity as $k$ tends to infinity. Since $V_1\cup\cdots\cup V_k$ is a real $4$-manifold which is 
non-compact, we have $b_4(V_1\cup\cdots\cup V_k)\,=\,0$. Therefore, from the fact that
$\lim_{k \to \infty}e(V_1\cup\cdots\cup V_k)\,=\,\infty$ it is deduced that 
$b_2(V_1\cup\cdots\cup V_k)$ tends to infinity as $k$ tends to infinity. Since homology groups commute with direct limits, it
follows that $b_2(\wt{X})$ is infinite. Therefore, Theorem \ref{Hurewicz Theorem} gives that $\rank(\pi_2(X))$ is infinite.
\end{proof}

\begin{remark}
If Case 2 in the proof of Lemma \ref{Lem: infinite b_2 for C*-fibration having 
affine lines in fibers} occurs, and the two additional assumptions 
that $X$ is a Stein manifold and $\pi_1(X)$ is finite hold, then $b_2(X)$ is infinite. To see 
this, note that if $b_2(X)$ is finite, and $X$ is a Stein surface, then 
$e(X)\,=\,1+b_2(X)$ is finite, and thus $$\rank 
(\pi_2(X))\,=\,b_2(\wt{X})\,=\,e(\wt{X})-1\,=\,|\pi_1(X)|\cdot e(X)-1$$ is also finite. But 
this contradicts the second statement of Lemma \ref{Lem: infinite b_2 for C*-fibration 
having affine lines in fibers}.
\end{remark}

\subsection{Case when $\Kbar\,=\,-\infty$}

\begin{theorem}\label{Thm: Kappa negative, pi_2 finite rank}
Let $X$ be a smooth complex affine surface with $\Kbar(X)\,=\,-\infty$ such that $\pi_1(X)$ is
infinite and $\rank(\pi_2(X))$ is finite. Then $X$ is an $\A^1$-bundle over a smooth algebraic
curve $B$ not isomorphic to $\A^1$ or $\PP^1$.
Consequently, $X$ is an Eilenberg--MacLane $K(\pi_1(X),\,1)$-space.
\end{theorem}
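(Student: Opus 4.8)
The plan is to realise $X$ as the total space of an $\A^1$-bundle after first producing an $\A^1$-fibration, and then to use the finiteness of $\rank(\pi_2(X))$ to eliminate every degenerate fibre; the Eilenberg--MacLane conclusion and the shape of the base will follow from Theorem \ref{Result : Classification}. First I would apply Miyanishi's structure theorem for smooth affine surfaces with $\Kbar=-\infty$ (cf. \cite{Miy2}) to obtain an $\A^1$-fibration $f\colon X\to B$ onto a smooth curve $B$. Since such a fibration may carry multiple fibres, I would next invoke the ramified covering trick (Theorem \ref{Ramified covering trick}) to obtain a finite \'etale covering $\rho\colon\widehat X\to X$ and an $\A^1$-fibration $g\colon\widehat X\to\widehat B$ with no multiple fibre. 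Its hypotheses are met: if $B\cong\PP^1$, then by Lemma \ref{Lem: Gang's Generalization} (applied to the simply connected fibre $\A^1$) the group $\pi_1(X)$ is the orbifold fundamental group of $\PP^1$ with cone points of orders $m_1,\dots,m_r$, and its infiniteness forces $r\ge 3$. Because $\rho$ is a covering map, $\widehat X$ is again smooth affine with $\pi_i(\widehat X)\cong\pi_i(X)$ for all $i\ge 2$, so $\rank(\pi_2(\widehat X))$ is finite, while $\pi_1(\widehat X)$ is a finite-index, hence infinite, subgroup of $\pi_1(X)$.

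Since $g$ has no multiple fibre and general fibre $\A^1$, Lemma \ref{Lem: Gang's Generalization} gives $\pi_1(\widehat X)\cong\pi_1(\widehat B)$, so $\pi_1(\widehat B)$ is infinite; by uniformisation the universal covering $\sigma\colon\wt B\to\widehat B$ then has contractible total space and infinite deck group, so every fibre of $g$ lifts to infinitely many fibres over $\wt B$. Suppose, for contradiction, that $g$ has a singular fibre $F_0=g^{-1}(p_0)$. As the fibres of an $\A^1$-fibration are disjoint unions of affine lines and there are no multiple fibres, $F_0$ is a disjoint union of at least two reduced curves isomorphic to $\C$. Pulling $g$ back along $\sigma$ yields an analytic $\C$-fibration $g'\colon X':=\wt B\times_{\widehat B}\widehat X\to\wt B$ with infinitely many such singular fibres, one over each point of $\sigma^{-1}(p_0)$. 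Moreover $X'\to\widehat X$ is a covering map and, by Lemma \ref{Lem: Gang's Generalization} applied over the simply connected base $\wt B$ with no multiple fibre, $\pi_1(X')=1$; hence $X'=\wt X$ is the universal cover of $X$. Lemma \ref{Lem: infinite b_2}(2) then forces $b_2(X')=\infty$, and Theorem \ref{Hurewicz Theorem} gives $\rank(\pi_2(X))=\rank(\pi_2(\widehat X))=b_2(\wt X)=b_2(X')=\infty$, contradicting the hypothesis. Therefore $g$ has no singular fibre.

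Consequently $g\colon\widehat X\to\widehat B$ is an $\A^1$-bundle, i.e.\ an analytic fibre bundle with contractible fibre $\C$, so its projection is a homotopy equivalence and $\widehat X$ is homotopy equivalent to $\widehat B$. As $\pi_1(\widehat B)$ is infinite, $\widehat B$ is a Riemann surface with contractible universal cover, hence aspherical, so $\widehat X$ is a $K(\pi_1(\widehat X),1)$-space and $\wt X=\wt{\widehat X}$ is contractible. Thus $X$ is an Eilenberg--MacLane $K(\pi_1(X),1)$-space, and it is non-contractible since $\pi_1(X)$ is infinite. Applying Theorem \ref{Result : Classification}(1) to this non-contractible smooth affine $K(\pi,1)$-surface with $\Kbar=-\infty$ then gives that $X$ is an $\A^1$-bundle over a smooth algebraic curve not isomorphic to $\A^1$ or $\PP^1$, completing the proof.

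The hard part will be the bookkeeping around multiple fibres: a single degenerate fibre of $g$ must produce infinitely many fibres upstairs, which requires $\pi_1(\widehat B)$ to be infinite, and this is exactly why multiple fibres must be cleared first --- the orbifold cone points they contribute can make $\pi_1$ of the base finite (for instance, a hyperbolic orbifold structure on $\PP^1$ or on $\A^1$) even when $\pi_1(X)$ is infinite. Checking the hypotheses of the ramified covering trick in the case $B\cong\PP^1$, and verifying that the pulled-back fibrations remain $\A^1$-fibrations without multiple fibres (so that $X'$ is genuinely the universal cover $\wt X$), are the remaining technical points.
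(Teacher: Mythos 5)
Your proposal is correct and is essentially the proof in the paper: the same Fujita--Miyanishi--Sugie $\A^1$-fibration, the same claim that $B\cong\PP^1$ forces at least three multiple fibres via Lemma \ref{Lem: Gang's Generalization} (with $\pi_1(\A^1)$ trivial), the same use of Theorem \ref{Ramified covering trick}, the same identification $\pi_1(\widehat X)\cong\pi_1(\widehat B)$, and the same contradiction obtained by pulling back along the universal cover of the base and feeding the infinitely many degenerate fibres into Lemma \ref{Lem: infinite b_2} together with Theorem \ref{Hurewicz Theorem}. The one genuine divergence is the endgame: the paper first descends the bundle structure (an $\A^1$-bundle upstairs forces every fibre of the original $f$ to be reduced and irreducible, so $f$ itself is an $\A^1$-bundle over $B\not\cong\A^1,\PP^1$) and only then cites Theorem \ref{Result : Classification} for the Eilenberg--MacLane conclusion, whereas you first prove that $\widehat X$ is aspherical (a bundle with contractible fibre over an aspherical curve), conclude that $X$ is a $K(\pi_1(X),1)$, and then invoke Theorem \ref{Result : Classification}(1) to recover the bundle structure on $X$ itself; your order spares you the paper's briefly-justified descent step at the price of leaning on the classification theorem for both conclusions---both routes are valid. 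Two small repairs are worth recording. First, ``no multiple fibre'' only says that the gcd of the component multiplicities is $1$, so a singular fibre of $g$ is a disjoint union of at least two components whose \emph{reduced} structures are isomorphic to $\A^1$, not necessarily reduced components; this is harmless, since Lemma \ref{Lem: infinite b_2} is stated in terms of reduced structures (and the paper phrases the corresponding step this way). Second, Lemma \ref{Lem: Gang's Generalization} concerns algebraic morphisms and cannot literally be applied to the analytic fibration $g'$ over $\wt{B}$; the paper instead quotes Remarks 3.13--3.14 of \cite{GGH}, but in your setting the simple connectivity of $X'$ also follows from covering-space theory alone, because $X'\to\widehat X$ is the pullback covering classified by the subgroup $g_*^{-1}\bigl(\sigma_*\pi_1(\wt{B})\bigr)=\ker(g_*)$, which is trivial since you have already shown that $g_*$ is an isomorphism.
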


\begin{proof}
Given that $\Kbar(X)\,=\,-\infty$, a well-known result due to Fujita--Miyanishi--Sugie (cf. \cite{Fuj1}, \cite{Miy-Sug}, \cite{Sug})
says that $X$ admits an $\A^1$-fibration $$f\,:\, X\, \longrightarrow\,B$$ onto a smooth algebraic curve
$B$. It is also known that every fiber of $f$, if
taken with the reduced structure, is a finite disjoint union of smooth affine curves each isomorphic to $\A^1$. (cf. \cite[Chapter I, Lemma 4.4]{Miy1}). 

{\bf Claim.}\, \emph{If $B\,\cong\, \PP^1$, then $f$ has at least three multiple fibers.}

{\it Proof of Claim.} Assume that $B\,\cong\, \PP^1$. If $f$ has at most one multiple fiber, 
then Lemma \ref{Lem: Gang's Generalization} immediately proves that 
$X$ is simply connected. This contradicts the hypothesis that $\pi_1(X)$ is infinite.

Assume that $f$ has exactly two multiple fibers $F_1$ and $F_2$ over points $p_1$ and $p_2$
respectively, with $\mult(F_i)\,=\,m_i$ for $i\,=\,1,\,2$. Let
$$\gcd(m_1,\,m_2)\,=\,d.$$ Then applying Lemma \ref{Lem: Gang's Generalization} to $f$ we obtain the following exact sequence:
\begin{equation}
(1)\,=\,\pi_1(\A^1)\,\longrightarrow\, \pi_1(X)\,\longrightarrow\, \Z/(\gcd(m_1,\, m_2))
\,=\,\Z/d\Z,
\end{equation}
as $B\setminus \{p_1,\, p_2\}\,\cong\, \C^*$ has fundamental group $\Z$. This
contradicts the fact that $\pi_1(X)$ is infinite. 

This completes the proof of the claim.

The above Claim ensures that the usual ramified covering trick is applicable here (cf. Theorem \ref{Ramified covering trick}).
Thus using the ramified covering trick we get another $\A^1$-fibration
$$f'\,:\, X'\, \longrightarrow\,B'$$ such that $X'$ is a finite cover of $X$ and $f'$ does
not have any multiple fibers. From Theorem \ref{Nori's Lemma} it follows that $\pi_1(X')\,\cong\, \pi_1(B')$. Since $X'$ is a finite cover
of $X$ and $\pi_1(X)$ is infinite, we know that $\pi_1(X')$ is also infinite. Hence $B'$ is not isomorphic to $\A^1$ or $\PP^1$.
	
Assume that $f'$ is not an $\A^1$-bundle. Since $f'$ does not have any multiple fiber, there is
a point $p\,\in\, B'$ such that the fiber $F\,=\, (f')^{-1}(b)$ consists of at least two disjoint curves each of which, if taken with reduced structure, isomorphic
to $\A^1$. Now take a
universal covering map $$\pi\,:\, \wt{B}'\, \longrightarrow\,B',$$ and
consider the fiber product $\wt{X}' \,:=\, X'\times_{B'}\wt{B'}$. 
Then $\wt{X}'$ is a smooth complex analytic surface which is no longer algebraic, and $\wt{X}'$ is an infinite sheeted covering of 
$X'$. Using \cite[Remarks 3.13, 3.14]{GGH}, it follows that $\wt{X}'$ is simply connected. Therefore in the new complex analytic $\C$-fibration $$\wt{f}'\,:\, \wt{X}'\, \longrightarrow\,\wt{B}'$$ there are infinitely many fibers, over points in $\pi^{-1}(p) \, \subset\, \wt{B}'$, each of which contains a disjoint union of at least two smooth curves each homeomorphic to $\C$. Consequently, by Lemma \ref{Lem: infinite b_2} and Theorem \ref{Hurewicz Theorem} it follows that $$b_2(\wt{X}')\,=\,\dim 
H_2(\wt{X}';\,\Q)\,=\,\dim\pi_2(\wt{X}')\otimes_\Z\Q\,=\,\dim\pi_2(X)\otimes_\Z\Q$$ is infinite, which contradicts
the given condition that $\dim\pi_2(X)\otimes_\Z\Q\, <\, \infty$. Hence
$f'\,:\, X'\, \longrightarrow\,B'$ is an $\A^1$-bundle.

Since $f'\,:\, X'\, \longrightarrow\,B'$ is an $\A^1$-bundle, every fiber of $f$ is reduced and irreducible. Hence, $f\,:\, X\, 
\longrightarrow\,B$ is also an $\A^1$-bundle. We know that $B$ is not isomorphic to $\A^1$ or $\PP^1$
because $\pi_1(X)$ is infinite. Finally, using 
Theorem \ref{Result : Classification} it follows that $X$ is indeed an Eilenberg--MacLane $K(\pi,\,1)$-space.
\end{proof}

\subsection{Case when $\Kbar\,\in\,\{0,\,1\}$}
 
The following lemma will be used later.

\begin{lemma}\label{Lem: e=0 surface admitting C*-fibration over P1 is never affine}
Let $f\,:\, X\, \longrightarrow\,\PP^1$ be a complex analytic map from a smooth complex analytic surface $X$ to $\PP^1$ such that every fiber of $f$, if taken with reduced structure, is isomorphic to $\C^*$. Then $X$ is not a Stein surface.
\end{lemma}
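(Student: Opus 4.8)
The plan is to argue by contradiction: assume $X$ is a Stein surface. Since $\dim_{\C}X\,=\,2$, Theorem~\ref{Narasimhan's Theorem}(4) forces $H^3(X;\,\Q)\,=\,0$, and I will contradict this by exploiting that the base $\PP^1$ is \emph{compact} to manufacture a nonzero class in $H^3(X;\,\Q)$ out of the fibration. The main tool is the Leray spectral sequence $E_2^{p,q}\,=\,H^p(\PP^1,\,R^qf_*\Q)\,\Longrightarrow\,H^{p+q}(X;\,\Q)$. First I would identify the sheaves $R^qf_*\Q$: as every fiber taken with reduced structure is the connected curve $\C^*$ (homotopy equivalent to $S^1$, hence to a $1$-complex), one has $R^0f_*\Q\,=\,\Q_{\PP^1}$, the sheaf $R^1f_*\Q$ is constructible of generic rank $1$ (coming from $H^1(\C^*;\,\Q)\,=\,\Q$), and $R^qf_*\Q\,=\,0$ for $q\,\geq\,2$. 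Since $\PP^1$ is compact, $f$ has only finitely many degenerate fibers, so $E_2$ is finite-dimensional and $X$ has finite Betti type.

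Next I would locate $H^3$. In total degree $3$ the only possibly nonzero entry of the $E_2$-page is $E_2^{2,1}\,=\,H^2(\PP^1,\,R^1f_*\Q)$, because $H^p(\PP^1,\,-)\,=\,0$ for $p\,\geq\,3$ and $R^qf_*\Q\,=\,0$ for $q\,\geq\,2$. No differential can enter or leave this position, since its only possible source $E_2^{0,2}$ and target $E_2^{4,0}$ both vanish; therefore
\[
H^3(X;\,\Q)\,\cong\,E_\infty^{2,1}\,=\,E_2^{2,1}\,=\,H^2(\PP^1,\,R^1f_*\Q).
\]
Writing $a\,=\,h^0$, $b\,=\,h^1$, $c\,=\,h^2$ for the dimensions of $H^\bullet(\PP^1,\,R^1f_*\Q)$, it then suffices to show $c\,\neq\,0$.

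The key step is an Euler-characteristic pinch that bypasses any explicit monodromy computation. Stratifying $\PP^1$ into the locus $U$ of local triviality and the finite degeneracy set, and using additivity and multiplicativity of the compactly supported Euler characteristic together with $e(\C^*)\,=\,0$, I get $e(X)\,=\,0$ (here $e\,=\,e_c$ on the oriented $4$-manifold $X$, which has finite Betti type). Computing the same Euler characteristic through the spectral sequence gives $e(X)\,=\,\chi(\PP^1,\,\Q)-\chi(\PP^1,\,R^1f_*\Q)\,=\,2-(a-b+c)$, so $a-b+c\,=\,2$. On the other hand, the generic rank of $R^1f_*\Q$ being $1$ yields the uniform bounds $a\,\leq\,1$ and $c\,\leq\,1$: for $c$, the sheaf sequence $0\to j_!(R^1f_*\Q|_U)\to R^1f_*\Q\to\mathcal{T}\to 0$ (with $\mathcal{T}$ a skyscraper on the degeneracy set) shows $H^2(\PP^1,\,R^1f_*\Q)$ is a quotient of $H^2(\PP^1,\,j_!L)\,\cong\,H^2_c(U,\,L)\,\cong\,H_0(U,\,L)$, where $L\,=\,R^1f_*\Q|_U$ is the rank-$1$ monodromy local system, and this coinvariant space has dimension at most $\rank L\,=\,1$. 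Combining $a-b+c\,=\,2$ with $b\,\geq\,0$ and $a,c\,\leq\,1$ forces $a\,=\,c\,=\,1$ and $b\,=\,0$; in particular $H^3(X;\,\Q)\,=\,\Q\,\neq\,0$, contradicting Theorem~\ref{Narasimhan's Theorem}(4). Hence $X$ is not Stein.

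The hard part will be the bound $a\,=\,h^0(\PP^1,\,R^1f_*\Q)\,\leq\,1$, i.e. the statement that $R^1f_*\Q$ carries no nonzero sections supported on the finite degeneracy locus (equivalently, that it is torsion-free of generic rank $1$, so that $R^1f_*\Q\hookrightarrow j_*L$ and $H^0\,\hookrightarrow\,L^{\pi_1(U)}$). Establishing this requires a local analysis at each degenerate fiber: I would check that a small neighborhood $f^{-1}(V_s)$ of a (possibly multiple) fiber deformation-retracts onto a curve homotopy equivalent to $\C^*$, so its local $H^1$ is exactly $\Q$ and injects into the nearby generic stalk, leaving no skyscraper contribution. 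This is precisely the place where multiple fibers and monodromy could intrude, but it is worth emphasizing that the numerical identity $a-b+c\,=\,2$ together with the rank bounds $a,c\,\leq\,1$ is engineered to absorb those phenomena, so no delicate description of $L$ or of the multiple-fiber structure is ultimately needed.
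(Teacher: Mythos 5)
Your proposal is correct in outline but takes a genuinely different route from the paper's proof, so a comparison is in order. The paper argues in two cases: if $f$ is a $\C^*$-bundle, a Mayer--Vietoris computation around a single fiber yields $H_3(X;\,\Z)\,\cong\,\Z$; if instead there are singular (multiple) fibers $D\,=\,\bigsqcup_{i=1}^r F_i$ over $S\,\subset\,\PP^1$, it runs the compactly supported cohomology sequence of the pair $(X,\,D)$, uses Poincar\'e--Lefschetz duality together with $H_3(X\setminus D;\,\Z)\,=\,0$ for the Stein surface $X\setminus D$, computes $e(X\setminus D)\,=\,0$ from the $\C^*$-bundle $X\setminus D\,\longrightarrow\,\PP^1\setminus S$, and bounds $b_1(X\setminus D)\,\leq\,r$ via Nori's lemma (Theorem~\ref{Nori's Lemma}) to force $\rank H_3(X;\,\Z)\,\geq\,1$; both cases contradict Theorem~\ref{Narasimhan's Theorem}, exactly as your argument does. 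Your Leray spectral sequence argument reaches the same contradiction ($H^3(X;\,\Q)\,\neq\,0$) uniformly, with no case split (the bundle case is just $S\,=\,\emptyset$, where $L$ is trivial and $a\,=\,c\,=\,1$ immediately), and it replaces Nori's lemma by the monodromy invariant/coinvariant bounds $a,\,c\,\leq\,1$. The trade-off is real, and you should state it more sharply than your closing sentence does: the paper's compact-support route needs only $H^1_c$ of the \emph{reduced} fibers and local triviality of $f$ over $\PP^1\setminus S$ (which it, like you, takes as known, along with finiteness of the degeneracy set), and never needs to understand neighborhoods of the multiple fibers; by contrast your bound $a\,\leq\,1$ genuinely requires the local statement you flag --- that $f^{-1}(V_s)$ retracts onto the central $\C^*$ with restriction to a nearby fiber injective on $H^1(\cdot\,;\Q)$ --- and this cannot be ``absorbed'' by the pinch: a skyscraper summand would permit $(a,b,c)\,=\,(2,0,0)$, which satisfies $a-b+c\,=\,2$ with $H^3(X;\,\Q)\,=\,0$ and yields no contradiction. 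Fortunately the flagged lemma is true and standard for multiple $\C^*$-fibers: locally the fibration is the $\Z/m\Z$-quotient of the trivialized family obtained by an $m$-th root base change, the neighborhood deformation retracts to the reduced central fiber, and a nearby fiber maps onto it by a degree-$m$ covering $\C^*\,\longrightarrow\,\C^*$, which is multiplication by $m$ on $H^1$, hence injective over $\Q$; this gives both the torsion-freeness of $R^1f_*\Q$ and $R^qf_*\Q\,=\,0$ for $q\,\geq\,2$. With that local analysis supplied (and the finiteness of the degenerate locus, asserted without proof in the paper as well), your proof is complete.
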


\begin{proof}
To prove by contradiction, assume that $X$ is a Stein space. 

{\bf Case 1.}\, \emph{Assume that $f$ is a $\C^*$-bundle.}

Take a point $p\,\in\, \PP^1$. Using the local triviality of $f$ around $p$ fix a
sufficiently small Euclidean open disc $p\, \in\, \Delta\, \subset\, \PP^1$
such that $$V\,:=\,f^{-1}(\Delta)\,\cong\, \Delta \times \C^* .$$ Denote
$F\,:=\, f^{-1}(p)$ which is isomorphic to $\C^*$, and set
$U\,:=\,X\setminus F$. Thus $\{U,\, V\}$ is an open cover of $X$ for the analytic topology.
Consider the following part of the Mayer-Vietoris sequence for integral homology for $X\,=\, U \cup V$:
\begin{equation}\label{e10}
H_3(U)\oplus H_3(V)\,\longrightarrow\, H_3(X)
\,\longrightarrow\, H_2(V\setminus F)\,\longrightarrow\, H_2(U)\oplus H_2(V).
\end{equation}
By K{\"u}nneth formula,
\begin{equation}\label{e12}
H_2(V;\,\Z)\,=\,0\,=\,H_3(V;\,\Z).
\end{equation}
Since $$f\big\vert_U\,:\, U\, \longrightarrow\,\PP^1\setminus \{p\}\,\cong\, \C$$
is a $\C^*$-bundle, and $\C$ is contractible, we have the following Serre's long exact sequence of integral homologies:
\begin{equation}\label{e11}
\cdots \,\longrightarrow\, H_i(\C^*)\,\longrightarrow\, H_i(U)
\,\longrightarrow\, H_i(\C)\,\longrightarrow\, \cdots
\end{equation}
for all $i\,\geq\, 0$. From \eqref{e11} it follows that $H_2(U;\,\Z)\,=\,0\,=\,H_3(U;\,\Z)$. Using this and \eqref{e12} in \eqref{e10} we 
conclude that $H_3(X;\,\Z)\,\cong\, H_2(V\setminus F;\,\Z)$. Since $$H_2(V\setminus F;\,\Z)\,\cong \, H_2((\Delta\setminus \{p\})\times\C^*;\,\Z)\,\cong\, 
\Z ,$$ this implies that $H_3(X;\,\Z)\,\cong\, \Z$. But we have $H_3(X;\,\Z)\,= \, 0$, for $X$ being a complex Stein space, by Theorem \ref{Narasimhan's Theorem}.

In view of the above contradiction, we now consider the complementary case.

{\bf Case 2.}\, \emph{Assume that $f$ has at least one singular fiber.}

Since $f\,:\, X\, \longrightarrow\, \PP^1$ is an algebraic map, it can have at most finitely 
many singular fibers. Also, by the hypothesis, every singular fiber is scheme-theoretically 
isomorphic to $m\C^*$ for some $m\,\geq\, 1$. Let $F_1,\, F_2,\, \cdots,\, F_r$ be all the singular 
fibers, and let $f(F_i)\,:=\, p_i \,\in\, \PP^1$, $1\,\leq\, i\,\leq\, r$; so $r$ is a positive integer. 
Let $S\,:=\,\{p_1,\, \cdots,\, p_r\}$ and $D\,:=\,\bigsqcup_{i=1}^{r}F_i$. Since $X$ is a smooth Stein 
surface, so is $X\setminus D$.

Now we consider the following part of the long exact sequence of cohomologies with compact support (with integer coefficients) for the 
pair $(X,\, D)$:
$$H^1_c(X,\, D)\,\longrightarrow\, H^1_c(X)\,\longrightarrow\, H^1_c(D) \,\longrightarrow\, H^2_c(X,\, D).$$
Using duality and Theorem \ref{Narasimhan's Theorem} it follows that $H^1_c(X,\,D;\,\Z)\,\cong\,
H_3(X\setminus D;\,\Z)\,=\,0$. Therefore,
\begin{equation}\label{e12n}
H^1_c(X;\,\Z)\,\cong\,
\ker\left(H^1_c(D;\,\Z) \,\longrightarrow\, H^2_c(X,\,D;\,\Z)\right).
\end{equation}
Again,
$$H^1_c(D;\,\Z)\,=\,\bigoplus_{i=1}^{r}H^1_c(F_i;\,\Z)\,\cong\, \bigoplus_{i=1}^{r}H_1(F_i;\,\Z)\,=\,\Z^r,$$ since
$(F_i)_{\text{red}}\,\cong\, \C^*$ for all $i\,\leq\, i \,\leq\, r$ and $H_1(\C^*;\,\Z)\,=\,\Z$. Now it is evident that
$$f\big\vert_{X\setminus D}\,:\, X\setminus D\, \longrightarrow\,\PP^1\setminus S$$ is a $\C^*$-bundle. Therefore $$e(X-D)\,=\,
e(\C^*)\cdot e(\PP^1\setminus S)\,=\,0.$$ Also note that since $X\setminus D$ is a Stein surface, $e(X-D)\,=\,1-b_1(X\setminus D)+b_2(X\setminus D)$,
whence $b_2(X\setminus D)\,=\,b_1(X\setminus D)-1$. Also by Theorem \ref{Nori's Lemma} followed by taking abelianization, we get an exact sequence of integral homologies
$$H_1(\C^*)\,\xrightarrow{\,\,\,\phi\,\,}\, H_1(X\setminus D)\,\longrightarrow\, H_1(\PP^1\setminus S)\,\longrightarrow\, 
0.$$ Here $H_1(\C^*;\,\Z)\,\cong\, \Z$, and $H_1(\PP^1\setminus S;\,\Z)\,\cong \,\Z^{r-1}$. Clearly, the following are true:
\begin{itemize}
\item $b_1(X\setminus D)
\,=\,b_1(\PP^1\setminus S)+\rank(\Ima(\phi))$; and 
\item $\rank(\Ima(\phi))\,\leq\, b_1(\C^*)\,=\,1$.
\end{itemize}
These together imply that
$b_1(X\setminus D)\,\leq\, r$, which in turn implies that $$b_2(X\setminus D)\,=\,b_1(X\setminus D)-1\,\leq\, r-1.$$
By duality, $H^2_c(X,\,D;\,\Z)\,\cong\, H_2(X\setminus D;\,\Z)$ and $H^1_c(X;\,\Z)\,\cong\, H_3(X;\,\Z)$. Thus using \eqref{e12n} it follows that $H_3(X;\,\Z)\,\cong\,
\ker\left(H^1_c(D;\,\Z) \,\longrightarrow\, H^2_c(X,\,D;\,\Z)\right)$ and $$\rank H^2_c(X,\,D;\,\Z)\,=\,b_2(X\setminus D)\,\leq\, r-1 \,< \,r\,=\,\rank H^1_c(D;\,\Z).$$
Hence we conclude that
\begin{equation}\label{tt}
\rank H_3(X;\,\Z) \,\geq \,1.
\end{equation}
But $X$ being a Stein surface, $H_3(X;\,\Z)\,=\,0$ by Theorem \ref{Narasimhan's Theorem},
and this contradicts \eqref{tt}. This completes the proof.
\end{proof}

The next result generalizes \cite[Proposition 4.5]{GGH}.

\begin{theorem}\label{Surfaces admitting C^*-fibration with pi_2 of finite rank}
Let $X$ be a smooth complex affine surface, with $$|\pi_1(X)|\,=\, \infty\ \ \, and\ \ \,
\dim \pi_2(X)\otimes_\Z \Q\, <\, \infty,$$
such that $X$ admits a $\C^*$-fibration $f\,:\, X\, \longrightarrow\, B$ onto a smooth algebraic curve. Then the following assertions hold:
\begin{enumerate}
\item Every fiber of $f$ , if taken with reduced structure, is isomorphic to $\C^*$.

\item $e(X)\,=\,0$.

\item $X$ is an Eilenberg--MacLane $K(\pi,\,1)$-space.
\end{enumerate}
\end{theorem}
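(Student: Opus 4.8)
The plan is to prove the three assertions in order, with (1) carrying essentially all the weight and (2), (3) following formally. For (1) I would argue by contradiction: suppose some fiber of $f$, taken with its reduced structure, is not isomorphic to $\C^*$. By the description of the singular fibers of a $\C^*$-fibration recalled above, such a fiber has the form $\Gamma\sqcup\Delta$ where either $\Gamma$ is a cross, or $\Delta$ contains at least one affine line $\C$; in every case the fiber contains a cross or an $\A^1$. The goal is to show that the presence of one such fiber forces $\rank(\pi_2(X))=\infty$, contradicting $\dim\pi_2(X)\otimes_\Z\Q<\infty$. First I would clear multiple fibers with the ramified covering trick (Theorem \ref{Ramified covering trick}), passing to a finite \'etale cover $f'\colon X'\to B'$ of $f$ with no multiple fiber; since $X'\to X$ is finite \'etale, $\pi_1(X')$ is still infinite and $\pi_i(X')\cong\pi_i(X)$ for $i\ge 2$, and at least one fiber of $f'$ is still of the bad type. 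When $B\cong\PP^1$ the applicability of Theorem \ref{Ramified covering trick} must be checked separately using Lemma \ref{Lem: Gang's Generalization}; as explained below, this simply-connected base case is routed through the fibre-unwrapping move, where the infiniteness of $\pi_1(X)$ forces the image of $\pi_1(\C^*)$ to be infinite.

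With $f'$ in hand, the idea is to manufacture out of the single bad algebraic fiber an \emph{analytic} $\C^*$-fibration (or $\C$-fibration) satisfying the hypotheses of Lemma \ref{Lem: infinite b_2 for C*-fibration having affine lines in fibers} (or Lemma \ref{Lem: infinite b_2}); this detour is forced, because an algebraic fibration has only finitely many singular fibers with finitely many components each, so neither lemma applies to $f'$ directly. There are two covering moves. If $\pi_1(B')$ is infinite, I would pull $f'$ back along the analytic universal cover $\wt B'\to B'$: the bad point acquires infinitely many preimages, yielding an analytic $\C^*$-fibration over $\wt B'\not\cong\PP^1$ with infinitely many bad fibers, and Lemma \ref{Lem: infinite b_2 for C*-fibration having affine lines in fibers}(2) gives $\rank(\pi_2(X))=\infty$. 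Alternatively, when the image of $\pi_1(\C^*)\to\pi_1(X')$ is infinite (equivalently, by Theorem \ref{Nori's Lemma}, the base contributes only finitely to $\pi_1$), passing to the universal cover unwraps the fibre $\C^*$ into $\C$; the $\C$-components or cross of the bad fiber, being simply connected, lift to infinitely many disjoint copies concentrated in a single fiber, so the resulting analytic $\C$-fibration meets the hypothesis of Lemma \ref{Lem: infinite b_2}(1), and Theorem \ref{Hurewicz Theorem} converts $b_2(\wt X)=\infty$ into $\rank(\pi_2(X))=\infty$. Either way we reach a contradiction, proving (1).

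The step I expect to be the main obstacle is matching the exact shape of the bad fiber to the precise hypotheses of Lemma \ref{Lem: infinite b_2 for C*-fibration having affine lines in fibers}. Part (2) of that lemma requires each bad fiber to contain \emph{either a cross or at least two} affine lines, whereas the structure theorem permits a degenerate fiber $\Gamma\sqcup\Delta$ with $\Gamma=m\C^*$ and $\Delta$ a \emph{single} copy of $\C$. Such a one-line fiber is not covered by part (2), and pulling back along $\wt B'\to B'$ merely spreads it over infinitely many distinct fibers instead of concentrating infinitely many lines in one fiber. The key is therefore to funnel every single-line configuration (and every simply-connected base, $B'\cong\PP^1$ or $\A^1$) through the fibre-unwrapping move, where the infinite cyclic cover of the $\C^*$-direction turns one such fiber into a fiber of a $\C$-fibration carrying an infinite disjoint union of $\C$'s, landing in Lemma \ref{Lem: infinite b_2}(1). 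Verifying that the fibre image in $\pi_1(X')$ is infinite precisely in these cases---so that the unwrapping is genuinely infinite---is the crux, and I expect it to follow from the infiniteness of $\pi_1(X)$ together with the no-multiple-fiber normalization and the simple-connectivity of the relevant infinite covers (cf. Theorem \ref{Nori's Lemma} and \cite[Remarks 3.13, 3.14]{GGH}).

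Finally, (2) and (3) follow quickly from (1). For (2), once every reduced fiber is $\C^*$, every fiber has vanishing topological Euler characteristic, so additivity over the fibration yields $e(X)=0$. For (3), since all reduced fibers are $\C^*$ and $X$ is affine (hence Stein), Lemma \ref{Lem: e=0 surface admitting C*-fibration over P1 is never affine} forces $B\not\cong\PP^1$, and then the ramified covering trick applies without restriction; clearing the (necessarily $m\C^*$) multiple fibers produces a finite \'etale cover $X'\to X$ on which the fibration is a genuine $\C^*$-bundle over a curve $B'\not\cong\PP^1$. Such a bundle is homotopy equivalent to an $S^1$-bundle over the aspherical curve $B'$, hence is itself aspherical, so $\pi_2(X')=0$, whence $\pi_2(X)\cong\pi_2(X')=0$ and $X$ is an Eilenberg--MacLane $K(\pi_1(X),1)$-space. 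Alternatively one may conclude through Corollary \ref{Cor: Characterization of E-type Stein surfaces}, after observing $\rank(\pi_2(X))\le 1$ and ruling out $\wt X\simeq S^2$, which is impossible when $\pi_1(X)$ is infinite.
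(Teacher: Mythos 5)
Your architecture for part (1) is essentially the paper's: assume a fiber whose reduction is not $\C^*$, eliminate multiple fibers via Theorem \ref{Ramified covering trick} (with the $B\cong\PP^1$ case needing separate care), pull back along the universal cover of the base, and then split according to whether the fiber class survives in $\pi_1$ --- base-unwrapping feeding Lemma \ref{Lem: infinite b_2 for C*-fibration having affine lines in fibers}(2), fiber-unwrapping feeding Lemma \ref{Lem: infinite b_2}(1), and Theorem \ref{Hurewicz Theorem} converting $b_2(\wt{X})=\infty$ into $\rank\pi_2(X)=\infty$. The paper organizes this as a trichotomy $\pi_1(X')\in\{(1),\Z/n\Z,\Z\}$ for $X'=X\times_B\wt{B}$. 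Your parts (2) and (3) also match the paper (Suzuki's formula; Lemma \ref{Lem: e=0 surface admitting C*-fibration over P1 is never affine} to force $B\not\cong\PP^1$, then the covering trick and asphericity of a $\C^*$-bundle over a curve $\not\cong\PP^1$). The problem is at the step you yourself flag as the crux, where your proposed resolution is wrong.

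You claim that every single-line configuration can be funneled through the fibre-unwrapping move, i.e.\ that the image of $\pi_1(\C^*)\to\pi_1(X')$ is infinite ``precisely in these cases.'' There is no such implication, and it fails concretely: take $f(x,y)=x(xy+1)$ on $\C^2$. This is a $\C^*$-fibration with no multiple fibers whose only singular fiber is $\{x=0\}\sqcup\{xy=-1\}\cong\A^1\sqcup\C^*$ --- exactly one affine line, no cross --- on a \emph{simply connected} total space, so the fiber class dies in $\pi_1$. In the corresponding situation inside the proof ($X'$ simply connected, $\pi_1(B)$ infinite, so after base-unwrapping one has infinitely many such one-line fibers over $\wt{B}$), both of your branches are dead: fibre-unwrapping is the identity, and the stated hypothesis of Lemma \ref{Lem: infinite b_2 for C*-fibration having affine lines in fibers}(2) (``at least two curves isomorphic to $\C$ or a cross'' per fiber) is not met. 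The paper's Claim 1 handles exactly this case by running that lemma's Euler-characteristic/Mayer--Vietoris argument anyway: what the argument actually uses is only that each bad fiber has positive Euler characteristic, which one line (or one cross) per fiber already guarantees, the general fiber $\C^*$ contributing $0$. So the fix is homological, not the group-theoretic dichotomy you conjecture. A smaller elision of the same flavor: for $B\cong\PP^1$ with exactly two multiple fibers of multiplicities with $\gcd=d>1$, Lemma \ref{Lem: Gang's Generalization} only gives that the fiber image has finite index in $\pi_1(X)$, and ``infinite image'' is not enough to unwrap the fiber in the universal cover; the paper first performs the degree-$d$ cyclic base change $z\mapsto z^d$ (its Claim B) to make $\pi_1(\C^*)\to\pi_1(X)$ surjective before unwrapping. (Your side remark in (3) that $\wt{X}\simeq S^2$ is ``impossible when $\pi_1(X)$ is infinite'' is also not immediate and would itself need an argument, but your main route to (3) via $\pi_2(X')=0$ for the bundle cover does not depend on it.)
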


\begin{proof}
Let $\wt{X}$ be a universal cover of $X$. Then $\wt{X}$ is a smooth complex Stein surface. From the
given condition on $X$ it is clear that $\dim\pi_2(\wt{X})\otimes_\Z\Q\,=\,b_2(\wt{X})$ is finite.

{\it Proof of (1).}
Consider $f\,:\, X\, \longrightarrow\,B$ with $B\,\cong\, \PP^1$. We will first prove the following useful lemma and then continue the proof again.

\begin{lemma}\label{Lem: Lemma (inside)}
$f$ must contain at least three multiple fibers whenever $B\,\cong\, \PP^1$.
\end{lemma}

{\it Proof of Lemma.}
We prove this by splitting into two claims.

{\bf Claim A.}\, \emph{$f$ must have at least two multiple fibers.}

{\it Proof of Claim A.} It follows from Case 1 of Lemma \ref{Lem: e=0 surface admitting C*-fibration over P1 
is never affine} that $f$ is not a $\C^*$-bundle.

Suppose that either $f$ has no multiple fiber at all or it has exactly one multiple fiber $F_0$ over a point $p_0 \,\in\, \PP^1$ with $\mult(F_0)\,=\,m$. Then applying Lemma \ref{Lem: Gang's Generalization} to $f$ we have an exact sequence 
$$\Z\,=\,\pi_1(\C^*) \,\longrightarrow\, \pi_1(X) \,\longrightarrow\, (1).$$ Since $\pi_1(X)$ is infinite, the above surjective 
homomorphism $\Z\,=\,\pi_1(\C^*) \,\longrightarrow\, \pi_1(X)$ is in fact an isomorphism. Let $F$ denote a general fiber of $f$, which 
is isomorphic to $\C^*$. Since the inclusion map $i \,:\, F \,\hookrightarrow\, X$ induces an isomorphism $$i_\ast\,:\, \pi_1(F) 
\,\longrightarrow\, \pi_1(X),$$ we conclude that $\wt{F}\,:=\,F\times_{X}\wt{X}$ is a universal cover of $F$, and this covering is 
infinite sheeted, because $\pi_1(F)\,\cong\, \pi_1(\C^*)\,\cong\, \Z$. Since $F \,\cong\, \C^*$, this proves that $\wt{F}$ is topologically isomorphic to $\C$ and the induced map $\wt{f}\,:\, \wt{X}\, \longrightarrow\,B$ is an analytic $\C$-fibration.

If the fiber of $f$ over a point $b\,\in\, B$ contains an irreducible component whose
reduced structure is isomorphic to $\A^1$, then the fiber 
$\wt{f}^{-1}(b)$ contains either infinitely many copies of crosses or an infinite number of disjoint curves each isomorphic 
to $\A^1$. Note that this uses that both $\A^1$ and a cross are topologically contractible. Now Lemma \ref{Lem: infinite b_2} says that $b_2(\wt{X})$ is infinite. However, this contradicts the hypothesis 
that $\pi_2(X)$ is of finite rank. Therefore, every singular fiber of $f$ is a multiple $\C^*$.

Since every singular fiber of $f$ is a multiple $\C^*$, and $X$ is a smooth variety, 
Lemma \ref{Lem: e=0 surface admitting C*-fibration over P1 is never affine} says that $X$ is not a Stein surface. But this
contradicts the fact that $X$ is affine. So $f$ can't have exactly one multiple fiber.

{\bf Claim B.}\, \emph{$f$ can not have exactly two multiple fibers.}

{\it Proof of Claim B.}\, Let $f\,:\, X\, \longrightarrow\,\PP^1$ have two multiple fibers $F_1$ and $F_2$ over the points $p_1, \,p_2
\,\in\, \PP^1$ respectively with $\mult(F_i)\,=\,m_i$ for $i\,=\,1,\,2$. We can assume that $p_1\,=\,0$ and $p_2\,=\,\infty$ in $\PP^1$. Let $\gcd(m_1,\,m_2)
\,=\,d$. Consider the analytic map $\Phi\,:\, \PP^1\, \longrightarrow\,\PP^1$ given by $\Phi(\infty)\,=\,\infty$ and $\Phi(z)\,=\,z^d$
for all $z\,\in \,\PP^1\,\setminus\,\{\infty\}\,=\,\C$. Then $g$ is a finite map of degree $d$. Let $X_1$ be the normalized fiber product with respect
to the maps $f$ and $\Phi$ over $\PP^1$. Then the induced map $X_1 \,\longrightarrow\, X$ is a covering map. Clearly $X_1$ is a smooth affine surface
such that $\pi_1(X_1)$ is infinite and $\rank (\pi_2(X_1))\,=\,\rank(\pi_2(X))\,<\,\infty$. Also $X_1$ admits a $\C^*$-fibration over $\PP^1$.

Now either the induced $\C^*$-fibration from $X_1$ to $\PP^1$ has at most one multiple fiber or if there are two multiple fibers then their multiplicities are coprime. But the first cannot happen by Claim A above. So we can assume, without loss of generality that $d\,=\,1$.

Now by Lemma \ref{Lem: Gang's Generalization} applied to $f$, we have an 
exact sequence $$\Z\,=\,\pi_1(\C^*) \,\longrightarrow\, \pi_1(X) \,\longrightarrow\, \Z/(\gcd(m_1,m_2))\,=\,(1).$$ Since $\pi_1(X)$ is 
infinite, the above surjective homomorphism $\Z\,=\,\pi_1(\C^*) \,\longrightarrow\, \pi_1(X)$ is in fact an isomorphism. Once again, 
using the same argument as in Claim A above, we can easily see that all singular fibers of $f$ are copies of multiple $\C^*$'s. But again this cannot happen because $X$ is a smooth affine surface and thus it would contradict Lemma \ref{Lem: e=0 surface admitting C*-fibration over P1 is 
never affine}. This completes the proof of this claim.

Hence the proof of the lemma follows.

{\it Back to the proof of Theorem \ref{Surfaces admitting C^*-fibration with pi_2 of finite rank}.} --- Lemma \ref{Lem: Lemma (inside)} implies that Theorem \ref{Ramified covering trick} is applicable to $f\,:\, X\, 
\longrightarrow\,B$ in order to eliminate the multiplicities of the fibers. Let $$\varphi\,:\, C\, \longrightarrow\, B$$ be a suitable 
finite ramified cover such that the induced $\C^*$-fibration $$\ol{f}\,:\,\ol{X \times _{B}C} \, \longrightarrow\, C$$ has no multiple 
fiber, where $\ol{X\times_B C}$ is the normalization of $X\times_B C$. The natural morphism $\ol{X \times _{B}C}\, \longrightarrow\,X$ 
is finite {\'e}tale and hence $\ol{X \times _{B}C}$ also has infinite fundamental group and $\pi_2(\ol{X \times_{B}C})\cong \pi_2(X)$. 
From the description of any singular fiber of a $\C^*$-fibration on a smooth affine surface, it follows that the reduced structure of 
every fiber of $\ol{f}$ is isomorphic to $\C^*$ if and only if the reduced structure of every fiber of $f$ is isomorphic to $\C^*$. 
Therefore, we can assume, without loss of generality, that $f\,:\, X\, \longrightarrow\,B$ has no multiple fiber.

Take a universal covering $\pi\,:\, \wt{B}\, \longrightarrow\,B$ of $B$. Since $f_\ast:\pi_1(X) \,\longrightarrow\,\pi_1(B)$ is surjective, the natural
map $$X'\,:=\,X\times_B \wt{B} \, \longrightarrow \,X$$ is a connected covering. Thus, whenever $B$ is simply connected,
$X'\,=\, X$, which is an algebraic variety. Note that $X'$ is not an algebraic variety in general; it is only a complex
manifold of complex dimension $2$. Since $f$ is a $\C^*$-fibration, the induced complex analytic map $f'\,:\, X'\, \longrightarrow\,\wt{B}$ is a
topological $\C^*$-fibration over $\wt{B}$. Since $f'$ has no multiple fiber, by using \cite[Remarks 3.13, 3.14]{GGH}, we have the following exact sequence:
$$\pi_1(\C^*) \,\longrightarrow\, \pi_1(X') \,\longrightarrow\, \pi_1(\wt{B})\,\longrightarrow\, (1).$$
Since $\wt{B}$ is simply connected, the above homomorphism $\pi_1(\C^*) \,\longrightarrow\, \pi_1(X')$ is surjective. As
$\pi_1(\C^*)\,\cong\, \Z$, it follows that $\pi_1(X')$ can either be trivial, or $\Z/n\Z$ for some $n\,>\,1$, or isomorphic to $\Z$.
	
Suppose, if possible, that a singular fiber $F_p$ of $f$ over $p\in B$ has an irreducible component isomorphic to $\A^1$ with reduced 
structure. Let $S$ be the finite set of all such points in $B$. From the assumption that $f\,:\, X\, \longrightarrow\, B$ has no multiple 
fiber, and the description of singular fibers of a $\C^*$-fibration, it follows that $S$ is precisely the set of points where $f$ is 
not locally trivial.

{\bf Claim 1.}\, \emph{$X'$ cannot be simply connected.}\mbox{}

{\it Proof of Claim 1.} Suppose, if possible, $X'$ is simply connected. Then $X'$ is a universal cover of $X$, and thus $\wt{X}$ and $X'$ are 
homeomorphic. Suppose that $B$ is simply connected. As we observed before, in this case, $X'\,=\, X$ by the construction. Hence $X$ is 
simply connected. However, this is a contradiction to the hypothesis that the fundamental group of $X$ is infinite.

Therefore $\pi_1(B)$ is infinite, i.e., $\pi\,:\, \wt{B}\, \longrightarrow\,B$ is an infinite sheeted covering. Let 
$$\pi^{-1}(S)\,:=\,\wt{S}\,=\,\{q_1,\,q_2,\, \cdots\}.$$ So $\wt{S}$ is a countably infinite discrete subset of $\wt{B}$ and this is 
precisely the set of points where $\wt{f}$ is not locally trivial. So $f'\,:\, X'\, \longrightarrow\,\wt{B}$ has infinitely many 
singular fibers, and they are all non-multiples. Again every fiber of $\wt{f}\,:\, \wt{X}\, \longrightarrow\, \wt{B}$ over any point 
$q\,\in\, \pi^{-1}(S)$ either contains a curve isomorphic to $\A^1$ with reduced structure or contains a cross. So using the second 
statement of Lemma \ref{Lem: infinite b_2 for C*-fibration having affine lines in fibers} it follows that $\pi_2(X)\,=\,\pi_2(X')$ is of 
infinite rank; but it contradicts the hypothesis. This proves Claim 1. 

{\bf Claim 2.}\, \emph{$\pi_1(X')$ cannot be isomorphic to $\Z/n\Z\,$ for any $n\,>\,1$.}\mbox{}

{\it Proof of Claim 2.} If possible let $\pi_1(X')\,\cong\,\Z/n\Z\,$ for some $n\,>\,1$. Thus $\wt{X}$ is an $n$-fold covering of $X'$, and since the later is a $\C^*$-fibration over $\wt{B}$, it follows that 
$\wt{f}\,:\, \wt{X}\, \longrightarrow\,\wt{B}$ is also a $\C^*$-fibration. Again the same analysis as in Claim 1 proves Claim 2.

{\bf Claim 3.}\, \emph{$\pi_1(X')$ cannot be isomorphic to $\Z$.}\mbox{}

{\it Proof of Claim 3.} Let $\pi_1(X')$ be isomorphic to $\Z$. Let $\wt{X'}$ be a universal cover of $X'$. Therefore $\wt{X'}$ and
$\wt{X}$ are isomorphic. Let $F'$ be a general fiber of $f'\,:\, X'\, \longrightarrow\,\wt{B}$, so $F' \,\cong\, \C^*$. In the case
under consideration, we know that the inclusion map $i \,:\, F' \,\hookrightarrow\, X'$ induces an isomorphism $i_\ast
\,:\, \pi_1(F') \,\longrightarrow\,\pi_1(X')$. Therefore, the fiber product $\wt{F}\,:=\,F'\times_{X'}\wt{X'}$ is a universal cover of
$F'$, and this covering is infinite sheeted, because $\pi_1(F')\,\cong\, \pi_1(\C^*)\,\cong \,\Z$. Since $F' \,\cong \,\C^*$, this
proves that $\wt{F}$ is topologically isomorphic to $\C$, and the induced map
$\wt{f}\,:\, \wt{X}\, \longrightarrow\, \wt{B}$ is an analytic $\C$-fibration. 

Let $\wt{S}\,:=\,\pi^{-1}(S)$. If $B$ is simply connected then $X'\,=\, X$
and $\wt{S}\,=\,S$. If $\pi_1(B)$ is infinite, then $\wt{S}$ is a countably infinite discrete subset of $\wt{B}$. Let $\wt{F}_z$ denote the 
fiber $\wt{f}^{-1}(z)$ of $\wt{f}$ over any point $z\,\in\, \wt{S}$.

We know that $\wt{F}_{q}$ is a non-empty disjoint union $\wt{\Gamma}_q \sqcup \wt{\Delta}_q$, where 
\begin{enumerate}
\item[(a)] $\wt{\Gamma}_q$ is either empty or a countably infinite disjoint union of crosses $\{C_{q,j}\,\,\big\vert\,\, j \,\in \,\N\}$, 

\item[(b)] $\wt{\Delta}_q$ is either empty or a countably infinite disjoint union of curves $\{A_{q,j}\,\,\big\vert\,\,j\, \in\, \N\}$
such that reduction of each $A_{q,j}$ is isomorphic to $\A^1$.
\end{enumerate}

Thus it follows from Lemma \ref{Lem: infinite b_2} that $b_2(\wt{X})$ is infinite. But this gives rise to a similar contradiction 
to the hypothesis that $\dim \pi_2(X)\otimes_{\Z}\Q\, <\, \infty$. Therefore Claim 3 follows.

Since all the possibilities of $\pi_1(X')$ have been ruled out under the assumption that there is a singular fiber of $f$ over that contains an irreducible component whose reduction is isomorphic to $\A^1$, we conclude that $f\,:\, X\, \longrightarrow\, B$ has no singular fiber which contains an irreducible component whose reduction is isomorphic to $\A^1$. Now the proof of this first part is completed using the description of singular fibers of a $\C^*$-fibration on a smooth affine surface. 

{\it Proof of (2)}. Suzuki's Formula (cf. \cite[Proposition 2]{Suz}) applied to the map $f$ yields $$e(X)\,=\,e(B)\cdot e(\C^*) \,+ 
\,\sum\limits_{i=1}^{r}\left(e(F_i)\,-\,e(\C^*)\right),$$ where $F_i$'s for $1\,\leq\,i\,\leq r$ denote the singular fibers of $f$.
Since $e(\C^*)\,=\,0$. it turns out that $e(X)\,=\,\sum_{i=1}^{r}e(F_i)$. By what we have proved above, it follows that
$$e(F_i)\ =\ e((F_i)_{\rm red})\ =\ e(\C^*)\,=\,0$$ for all $1\,\leq\,i\,\leq r$. Hence $e(X)\,=\,0$.

\begin{remark}\label{Rem: e=0 implies all fibers C* with reduced structure}
    This is a side remark related to the converse of the statement in (2). With the same notations as above, assume that $e(X)\,=\,0$. If the $\C^*$-fibration $f$ has a singular fiber having an irreducible component whose reduction is isomorphic to $\A^1$, the Euler characteristic of such fiber is positive and therefore $e(X)$ becomes positive too using Suzuki's formula, stated above. Thus $e(X)$ being zero would force $f$ to contain singular fibers only multiple $\C^*$'s from the very description of a singular fiber of any algebraic $\C^*$-fibration on any smooth affine surface (cf. \cite[Lemma 2.9]{Miy-Sug1}). This proves that the above two assertions (1) and (2) are in fact equivalent.
\end{remark}
{\it Proof of (3)}. Now we will prove why such surface becomes a $K(\pi,\,1)$-space. Since $X$ is affine, by Lemma \ref{Lem: e=0 surface admitting C*-fibration over P1 is never affine} it follows that $B\,\not\cong\,\PP^1$. Therefore by using the usual ramified covering trick, it turns out that there is a finite {\'e}tale morphism from an algebraic (which is of course affine) surface, say $\bar{X}$, onto $X$ such that $\bar{X}$ is a $\C^*$-bundle over an algebraic curve, say $\bar{B}$. Since $B\,\not\cong\,\PP^1$, thus $\bar{B}\,\not\cong\,\PP^1$ too as $\bar{B}$ dominates $B$ under a finite surjective morphism. Therefore both $\C^*$ and $\bar{B}$ are Eilenberg-MacLane $K(G,\,1)$-spaces, whence $\bar{X}$ using \cite[Lemma 3.5]{GGH} and hence $X$ is an Eilenberg-MacLane $K(\pi,\,1)$-space.

This completes the proof.
\end{proof}

\begin{example}[{\bf Examples of smooth affine surfaces with $\pi_2$ of infinite rank}]\mbox{}

Let $f(X,\,Y) \,\in\, \C[X,\,Y]$ be an irreducible polynomial such that a general fiber of the polynomial map $f\,:\, \C^2\, 
\longrightarrow\, \C$ is isomorphic to $\C^*$. Consider the irreducible curve $H$ in $\C^2$ defined 
by the equation $f\,=\,0$. Let $V\,:=\,\C^2\setminus H$. Then $V$ is a complex smooth affine surface. Let $f\,:\, V\, 
\longrightarrow\, \C$ be the morphism obtained by restricting to $V\,\subset\, \C^2$.

Consider the following part of the long exact sequence of integral cohomologies with compact support for the pair $(\C^2,\,H)$:
\begin{equation}\label{et}
H^2_c(\C^2;\,\Z)\,\longrightarrow\, H^2_c(H;\,\Z)\,\longrightarrow\, H^3_c(\C^2,\,H;\,\Z)\,\longrightarrow \,H^3_c(\C^2;\,\Z).
\end{equation}
Using duality, $H^2_c(\C^2;\,\Z)\,\cong\, H_2(\C^2;\,\Z)\,=\,0$, and similarly $H^3_c(\C^2;\,\Z)\,\cong\, H_1(\C^2;
\,\Z)\,=\,0$. So \eqref{et} gives that $H^3_c(\C^2,\,H;\,\Z)\,\cong\, H^2_c(H;\,\Z)$. Again by duality it follows that
$$H_1(V;\,\Z)\,\cong\, H^3_c(\C^2,\,H;\,\Z)\,\cong\, H^2_c(H;\,\Z)\,\cong\, H_0(H;\,\Z)\,\cong\, \Z.$$ 
Since $H_1(V;\,\Z)$ is infinite, $\pi_1(V)$ is also infinite. 
Therefore, it follows from Theorem \ref{Surfaces admitting C^*-fibration with pi_2 of finite rank} that $\pi_2(V)$ is of infinite rank.
\end{example}

Before moving to our next result we need some basic definitions.

In \cite[\S~2]{Koj}, Hideo Kojima has proved that every smooth affine surface with $\Kbar\,=\,0$ is rational (cf. \cite[Theorem 1.6]{Koj}). 
In the same paper, Kojima defined \emph{strongly minimal model} $(V,\,D)$ of a smooth open rational surface $X$ with $\Kbar(X)\,=\,0$, 
where $V$ is a smooth projective surface and $D\, \subset\, V$ is a simple normal crossing connected $\Q$-divisor.

\begin{definition}[{cf. \cite[Definition 2.6]{Koj}}]
Let $X$ be a smooth affine surface with $\Kbar(X)\,=\,0$. Then $X$ is called \emph{strongly minimal} if there is a strongly
minimal model $(V,\,D)$ of $X$ such that $X\,=\,V\setminus D$.
\end{definition}

\begin{lemma} [{\cite[Lemma 2.3]{Koj}}]\label{Strongly minimal is obtained by removing A^1's}
Let $S$ be a nonsingular open rational surface with $\Kbar(S)\, =\, 0$ and with a connected boundary at infinity. Let $(V,\, D)$ be a strongly minimal model of $S$. Then
the following statements hold:
\begin{enumerate}
\item $D$ is connected.

\item Suppose further that $S$ is affine. Then $V \setminus D$ is an affine open subset of $S$, and
$S \setminus (V \setminus D)$ is either an empty set or it is a disjoint union of curves isomorphic to $\A^1$. 
\end{enumerate}
\end{lemma}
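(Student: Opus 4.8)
The plan is to place the argument inside the logarithmic minimal model theory for open surfaces and to view the passage from an arbitrary completion of $S$ to its strongly minimal model $(V,D)$ as a finite sequence of blow-downs (and, where the definition requires, blow-ups), tracking the boundary divisor and the interior curves at each step. First I would fix a smooth completion $(\ol{V}_0,\,\ol{D}_0)$ of $S$ with $\ol{V}_0$ smooth projective and $\ol{D}_0\,=\,\ol{V}_0\setminus S$ a reduced simple normal crossing divisor that is connected; this exists by the hypothesis that $S$ has connected boundary at infinity. I would then realize $(V,D)$ by the minimalization process, which contracts superfluous $(-1)$-curves and, using $\Kbar(S)\,=\,0$, carries out the contractions forcing $\kappa(K_V+D)\,=\,0$; the essential bookkeeping is to record which contracted curves lie in the current boundary and which lie in the interior copy of $S$.

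For part (1) I would check that connectivity of the boundary survives each elementary step. Blowing down a $(-1)$-curve contained in the boundary only merges boundary components, since the image of a connected divisor is connected; blowing down an interior $(-1)$-curve leaves the boundary's connectivity type unchanged whether or not it meets the boundary. The one delicate move is a step of the strong minimalization that promotes an interior curve to the boundary: this disconnects $D$ precisely when the promoted curve is disjoint from the existing boundary. To exclude this I would invoke the $\Kbar\,=\,0$ structure: the Zariski--Fujita decomposition $K_V+D\,=\,P+N$ has $P$ nef with $\kappa(P)\,=\,0$, hence $P$ numerically trivial, and on a rational surface this pins the bark $N$ and the shape of $D$ down tightly enough to forbid a free-standing boundary component, giving connectedness of $D$.

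For part (2), assume in addition that $S$ is affine, and write $S'\,:=\,V\setminus D$. By construction $S'$ is a Zariski-open subset of $S$, and its complement $S\setminus S'$ is exactly the union of the interior curves that the minimalization absorbs into the boundary or contracts. That $S'$ is affine follows from a standard affineness criterion once $D$ is known to be connected (part (1)) and to support a big divisor in the strongly minimal model, so that $V\setminus D$ is an affine open subset of the affine $S$. It then remains to analyze $S\setminus S'$: using the explicit list of modifications permitted in the $\Kbar\,=\,0$ minimalization together with the absence of complete curves on the affine $S$ and the tree-like shape of the relevant boundary configurations, I would show that each curve in $S\setminus S'$ is isomorphic to $\A^1$ and that distinct such curves are disjoint.

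The main obstacle is the last analysis, shared by the end of part (1) and the whole of part (2): pinning down \emph{exactly} which interior curves are altered by the strong minimalization and proving that they form disjoint copies of $\A^1$. This demands the detailed classification of the contraction steps for $\Kbar\,=\,0$---controlling the self-intersections and incidence patterns of the curves involved---and it is here that the affineness of $S$ and the numerical constraint $\kappa(K_V+D)\,=\,0$ must be combined with the greatest care.
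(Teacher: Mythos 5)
First, a point of calibration: the paper does not prove this lemma at all --- it is quoted verbatim from Kojima's paper (Lemma 2.3 of \cite{Koj}), so the benchmark is Kojima's own argument, which runs through the explicit peeling/almost-minimalization process for open rational surfaces with $\Kbar\,=\,0$. Measured against that, your proposal identifies the right machinery (minimalization as a chain of contractions, the Zariski--Fujita decomposition with numerically trivial positive part when $\Kbar\,=\,0$, the role of affineness in excluding complete interior curves), but it is a plan rather than a proof: your final paragraph openly defers exactly the two claims that constitute the lemma, namely that $D$ stays connected and that the removed interior curves are disjoint copies of $\A^1$. The connectedness step in particular is settled by assertion --- ``pins the bark $N$ and the shape of $D$ down tightly enough to forbid a free-standing boundary component'' names no actual mechanism --- and this is precisely where the detailed case analysis of the $\Kbar\,=\,0$ boundary configurations has to be done.

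Beyond incompleteness, one step as described would not go through. You say the delicate move is ``a step of the strong minimalization that promotes an interior curve to the boundary,'' but that is not how $S \setminus (V\setminus D)$ arises in the peeling process. What happens is that one contracts a $(-1)$-curve $E$ which is \emph{not} contained in the boundary but meets it; after contraction the new open surface is $S$ minus the interior part $E \cap S$, and $E \cap S \,\cong\, \A^1$ exactly when $E$ meets the boundary transversally in a single point. Proving that every contracted interior-meeting curve has this incidence pattern, and that distinct such curves are disjoint, is the numerical heart of the lemma, and your sketch does not supply it. Likewise, the affineness of $V \setminus D$ is not automatic from ``removing curves from an affine surface'': the complement of a closed curve in a smooth affine surface need not be affine in general, so invoking ``a standard affineness criterion'' once $D$ ``supports a big divisor'' is a placeholder, not an argument --- one needs something like Goodman's criterion applied on the projective model $V$, together with the specific structure of $D$ established in part (1). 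In short, the framework is sound and consistent with Kojima's route, but the proposal leaves every load-bearing step unproved and misdescribes the mechanism producing the $\A^1$'s.
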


Let $S$ be a smooth affine surface with $\overline{\kappa}(S)\,=\,0$. Then $S'\,=\,V\setminus D$ is a strongly minimal surface 
corresponding to a strongly minimal model $(V,\,D)$ of $S$ such that $S'$ is embedded in $S$ as an affine open subspace. It can be 
seen that $\Kbar(S)\,=\,\Kbar(S')\,=\,0.$

In \cite[\S~3--5]{Koj}, the author has classified the strongly minimal smooth affine surfaces $S'$ with $\Kbar(S')\,=\,0$ and having 
connected boundaries at infinity (see, \cite{Koj} for necessary definitions).

\begin{theorem}\label{Thm: Kappa 0, pi_2 finite rank}
Let $X$ be a smooth complex affine surface with $\Kbar(X)\,=\,0$ having infinite fundamental group and
$\dim \pi_2(X)\otimes_{\Z}\Q\,<\, \infty$. Then $X$ is either isomorphic to $\C^\ast\times \C^\ast$ or it is
Fujita's $H[-1,\,0,\,-1]$. Consequently, $X$ is an Eilenberg--MacLane $K(\pi,\,1)$-space.
\end{theorem}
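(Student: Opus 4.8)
The plan is to reduce everything to the existence of a $\C^*$-fibration on $X$ and then invoke the machinery already developed. Concretely, once I know that $X$ carries a $\C^*$-fibration $f\colon X\to B$ onto a smooth algebraic curve, Theorem \ref{Surfaces admitting C^*-fibration with pi_2 of finite rank} applies verbatim, since its hypotheses $|\pi_1(X)|=\infty$ and $\dim\pi_2(X)\otimes_\Z\Q<\infty$ are exactly the ones assumed here. That theorem yields $e(X)=0$, that every fiber of $f$ taken with its reduced structure is isomorphic to $\C^*$, and that $X$ is an Eilenberg--MacLane $K(\pi,1)$-space. Being a non-contractible (since $\pi_1(X)$ is infinite) smooth affine $K(G,1)$ surface with $\Kbar(X)=0$, Theorem \ref{Result : Classification}(2) then forces $X\cong\C^*\times\C^*$ or $X\cong H[-1,0,-1]$, which is the assertion.

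So the real content is to produce the $\C^*$-fibration. First I would invoke Kojima's theorem that a smooth affine surface with $\Kbar=0$ is rational and pass to a strongly minimal model $(V,D)$ of $X$. By Lemma \ref{Strongly minimal is obtained by removing A^1's} the boundary $D$ is connected, the strongly minimal surface $S':=V\setminus D$ is an affine open subset of $X$, and $X\setminus S'$ is a (possibly empty) disjoint union of curves isomorphic to $\A^1$. Since $S'$ is obtained from $X$ by deleting these affine lines, which form a real-codimension-two subset, the inclusion induces a surjection $\pi_1(S')\twoheadrightarrow\pi_1(X)$, so $\pi_1(S')$ is infinite; moreover $\Kbar(S')=\Kbar(X)=0$ and $S'$ has connected boundary at infinity.

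Next I would feed $S'$ into Kojima's classification of strongly minimal smooth affine surfaces of logarithmic Kodaira dimension zero with connected boundary. The point to extract from that list is that the members with infinite fundamental group are precisely $\C^*\times\C^*$ and $H[-1,0,-1]$, each of which carries a $\C^*$-fibration over a curve (for $H[-1,0,-1]$ this is the untwisted $\C^*$-fibration over $\A^1$ recorded in Definition \ref{Def: Fujita's H[-1,0,-1]}). Hence $S'$ admits a $\C^*$-fibration whose general fibers are $\C^*$; the affine lines comprising $X\setminus S'$ can only occur inside singular fibers, as the $\Delta$-part of the $\Gamma\sqcup\Delta$ description of a $\C^*$-fibration's singular fibers, so the fibration extends to a $\C^*$-fibration $f\colon X\to B$. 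With $f$ in hand the first paragraph finishes the proof; in fact the conclusion of Theorem \ref{Surfaces admitting C^*-fibration with pi_2 of finite rank}(1), that every reduced fiber of $f$ is $\C^*$, retroactively shows that no $\A^1$-component occurs, i.e. $X=S'$ is itself strongly minimal.

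I expect the main obstacle to be this middle step: reading off cleanly from Kojima's classification that infinite fundamental group singles out $\C^*\times\C^*$ and $H[-1,0,-1]$ among the strongly minimal $\Kbar=0$ surfaces, and then verifying that the resulting $\C^*$-fibration on $S'$ genuinely extends across the added affine lines to a $\C^*$-fibration on all of $X$ rather than acquiring base points of indeterminacy or altering the general fiber. Everything after that is a direct application of the already-established Theorems \ref{Surfaces admitting C^*-fibration with pi_2 of finite rank} and \ref{Result : Classification}.
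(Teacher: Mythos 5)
Your architecture is close to the paper's own proof (pass to the strongly minimal model, invoke Kojima's classification, extend the resulting $\C^*$-fibration to $X$, then apply Theorem \ref{Surfaces admitting C^*-fibration with pi_2 of finite rank}), and your finishing move is a legitimate variant: once $X$ is known to be a non-contractible smooth affine $K(\pi,1)$ surface with $\Kbar(X)=0$, Theorem \ref{Result : Classification}(2) does force $X\cong\C^*\times\C^*$ or $X\cong H[-1,0,-1]$. The paper proceeds in the opposite order --- it first identifies $X$ inside Kojima's list by an Euler characteristic argument and only then cites \cite[Proposition 5.8]{GGH} for the $K(\pi,1)$ property of $H[-1,0,-1]$ --- whereas you extract the $K(\pi,1)$ property from Theorem \ref{Surfaces admitting C^*-fibration with pi_2 of finite rank}(3) and read off the isomorphism type from the classification of Eilenberg--MacLane surfaces.

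However, the step you yourself flag as the main obstacle contains a genuine error: infinite fundamental group does \emph{not} single out $\C^*\times\C^*$ and $H[-1,0,-1]$ among the strongly minimal surfaces with $\Kbar=0$. As the paper reads Kojima's list, the strongly minimal model $X_0$ with $\pi_1(X_0)$ infinite can be any of \emph{five} surfaces: $O(4,1)$, $O(2,2)$, $O(1,1,1)$, $H[-1,0,-1]$ and $H[0,0]$, of which only $O(1,1,1)\cong\C^*\times\C^*$ and $H[-1,0,-1]$ have Euler characteristic zero. Eliminating the other three is precisely where the hypothesis $\dim\pi_2(X)\otimes_\Z\Q<\infty$ enters the classification step: all five candidates carry $\C^*$-fibrations which (by an observation the paper cites from \cite{GGH}) extend to $X$ over a larger base curve; Theorem \ref{Surfaces admitting C^*-fibration with pi_2 of finite rank} then gives $e(X)=0$, and Lemma \ref{Strongly minimal is obtained by removing A^1's} yields $e(X_0)\le e(X)=0$ with equality if and only if $X_0\hookrightarrow X$ is an isomorphism; since all five candidates have $e\ge 0$, this forces $X=X_0$ and leaves only the two surfaces with $e=0$. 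Your proposal is repairable, because your route only needs that \emph{every} candidate on the list is $\C^*$-fibered, not that there are exactly two of them; but as written the key classification claim is false. Note also that your justification for extending the fibration across the affine lines of $X\setminus S'$ is circular: you appeal to the $\Gamma\sqcup\Delta$ structure of singular fibers of a $\C^*$-fibration on $X$ before a fibration on $X$ has been constructed. The paper handles the extension by direct citation to \cite{GGH}, and the conclusion $X=S'$ comes out of the equality case of the Euler characteristic inequality rather than retroactively from the fiber structure.
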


\begin{proof}
Let $X_0$ be the strongly minimal model of $X$ and $i\,:\,X_0\,\hookrightarrow \,X$ the open immersion. Since $i_\ast\,:\,\pi_1(X_0) 
\,\longrightarrow\,\pi_1(X)$ is surjective, and $\pi_1(X)$ is infinite, it follows
that $\pi_1(X_0)$ is also infinite. Also $\Kbar(X_0)\,=\,\Kbar(X)\,=\,0$. 
So from the list of Kojima (cf. \cite{Koj}) it is evident that $X_0$ is isomorphic to one of $O(4,\,1)$, $O(2,\,2)$, $O(1,\,1,\,1)$, 
$H[-1,\,0,\,-1]$ and $H[0,\,0]$. Note that all the above five strongly minimal surfaces admit a $\C^\ast$-fibration $f_0\,:\, X_0\, 
\longrightarrow\, B_0$ onto a smooth algebraic rational curve $B_0$, which extends as a $\C^\ast$-fibration to $X$ onto a larger base 
curve $B$, i.e., there is a $\C^\ast$-fibration $f\,:\, X\, \longrightarrow\,B$ onto a smooth rational algebraic curve such that $B_0$ 
embeds in $B$ as a Zariski open subset and $f\big\vert_{X_0}\,=\,f_0$; this is observed in \cite{GGH}. Since $\pi_2(X)$ is of finite rank and 
$\pi_1(X)$ is infinite, Theorem \ref{Surfaces admitting C^*-fibration with pi_2 of finite rank} gives that $e(X)\,=\,0$.
Using Kojima's lemma (cf. Lemma \ref{Strongly minimal is obtained by removing A^1's}) we know that $e(X_0) \,\leq\, e(X)\,=\,0$ and,
moreover, the equality holds if and only if $i\,:\,X_0\,\hookrightarrow \,X$ is an isomorphism. In our case, the
five possibilities of $X_0$ as described above have non-negative Euler characteristics. Hence
$e(X_0)\,=\,e(X)\,=\,0$, i.e., either $X$ is isomorphic to $O(1,\,1,\,1)$, which is 
further isomorphic to $\C^\ast \times \C^\ast$, or $X$ is isomorphic to the surface $H[-1,\,0,\,-1]$.
 
Clearly $\C^*\times \C^*$ is an Eilenberg--MacLane $K(\Z^2,\,1)$-surface. It is proved in \cite[Proposition 5.8]{GGH} that the 
surface $H[-1,\,0,\,-1]$ is also an Eilenberg--MacLane $K(\pi,\,1)$-space with $\pi\,=\,\pi_1(H[-1,0,-1])$
being a non-abelian group. This completes 
the proof.
\end{proof}

\begin{prop}\label{Cor: Kappa 0 - strongly minimal model for an E-type surface}
Let $X$ be a smooth complex affine surface such that $X$ satisfies the finite homotopy rank-sum property and $\Kbar(X)\,=\,0$. Then the strongly minimal model for $X$ is either $\C^\ast\times \C^\ast$ or Fujita's $H[-1,\,0,\,-1]$. 
\end{prop}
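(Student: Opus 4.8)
The plan is to pass to the strongly minimal model and reduce everything to a single Euler-characteristic computation. Let $i\colon X_0\hookrightarrow X$ be the strongly minimal model, so that $\Kbar(X_0)=\Kbar(X)=0$ and, by Lemma \ref{Strongly minimal is obtained by removing A^1's}, the set $X\setminus X_0$ is a disjoint union of, say, $k$ affine lines. Additivity of the Euler characteristic over this decomposition together with $e(\A^1)=1$ gives $e(X)=e(X_0)+k\ge e(X_0)$. Since, as recorded in the proof of Theorem \ref{Thm: Kappa 0, pi_2 finite rank}, the strongly minimal $\Kbar=0$ surfaces with infinite fundamental group all have $e\ge 0$, with equality exactly for $\C^\ast\times\C^\ast$ and $H[-1,0,-1]$, the whole statement follows once I show $e(X)=0$. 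Indeed, $e(X)=0$ forces $e(X_0)=0$; as $X_0$ is Stein, Theorem \ref{Narasimhan's Theorem} gives $e(X_0)=1-b_1(X_0)+b_2(X_0)$, so $e(X_0)=0$ yields $b_1(X_0)\ge 1$, whence $\pi_1(X_0)$ is infinite and $X_0$ lies among those surfaces; having Euler characteristic $0$ it must be $\C^\ast\times\C^\ast$ or $H[-1,0,-1]$.

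It therefore remains to prove $e(X)=0$. Each strongly minimal model carries a $\C^\ast$-fibration that extends to a $\C^\ast$-fibration $f\colon X\to B$ over a rational curve, so $X$ has one. If $\pi_1(X)$ is infinite, the finite homotopy rank-sum hypothesis gives $\dim\pi_2(X)\otimes_\Z\Q<\infty$, so Theorem \ref{Surfaces admitting C^*-fibration with pi_2 of finite rank} applied to $f$ yields $e(X)=0$ immediately (and Theorem \ref{Thm: Kappa 0, pi_2 finite rank} even identifies $X$ itself with $\C^\ast\times\C^\ast$ or $H[-1,0,-1]$). Conversely, if $\pi_1(X)$ is finite then $b_1(X)=0$ and $e(X)=1+b_2(X)\ge 1>0$. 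Thus proving $e(X)=0$ is equivalent to excluding a finite fundamental group, and this is the crux.

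To rule out finite $\pi_1(X)$ I would combine the homotopy dichotomy with surface classification. Since $X$ is affine, hence Stein, Corollary \ref{Cor: Characterization of E-type Stein surfaces} says the universal cover $\wt X$—again a smooth affine surface with $\Kbar(\wt X)=0$—is contractible or homotopy equivalent to $S^2$; in particular, for $\pi_1(X)$ finite, $e(\wt X)=|\pi_1(X)|\cdot e(X)\in\{1,2\}$. If $\pi_1(X)$ is a non-trivial finite group, all its elements are torsion, so Corollary \ref{Cor:Finite Fundamental Group of Elliptic Type Affine Surface} forces $\pi_1(X)\cong\Z/2\Z$ and $e(X)=1$, making $X$ a smooth $\Q$-homology plane with $\Kbar=0$; if $\pi_1(X)$ is trivial, then $X$ is either contractible (a $\Z$-homology plane) or homotopy equivalent to $S^2$, once more with $\Kbar=0$. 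The hard part, and the genuine obstacle, is to show that none of these low-Euler-characteristic configurations occurs when $\Kbar=0$: that a smooth affine surface of logarithmic Kodaira dimension $0$ is neither a $\Q$-homology plane nor simply connected (equivalently, always satisfies $b_1\ge 1$). This step lies outside the homotopy-theoretic machinery used above and must be drawn from the classification of $\Kbar=0$ affine surfaces (Kojima) and of $\Q$-homology planes; granting it, $\pi_1(X)$ is infinite, so $e(X)=0$, and the reduction in the first paragraph completes the proof.
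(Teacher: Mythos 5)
Your first paragraph and your infinite-$\pi_1$ branch are sound and essentially coincide with the paper's argument: passing to the strongly minimal model $X_0$, using $e(X)=e(X_0)+k$ via Lemma \ref{Strongly minimal is obtained by removing A^1's}, deducing $b_1(X_0)\geq 1$ from $e(X_0)\leq 0$ (hence $\pi_1(X_0)$ infinite, placing $X_0$ in Kojima's list with the two $e=0$ surfaces as the only options), and invoking Theorem \ref{Surfaces admitting C^*-fibration with pi_2 of finite rank} when $\pi_1(X)$ is infinite. The genuine gap is your treatment of the finite-$\pi_1$ branch. You correctly observe that $\pi_1(X)$ finite forces $e(X)\geq 1$, and you then try to show this branch is \emph{vacuous}, reducing everything to the claim that a smooth affine surface with $\Kbar=0$ is never a $\Q$-homology plane nor simply connected. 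You do not prove this claim --- you write ``granting it'' and defer to unspecified classification results --- so the proof is incomplete exactly at what you yourself call the crux. Moreover, the claim is false as stated: $\Q$-homology planes with $\Kbar=0$ are known to exist, and nothing in the paper (nor in Kojima's classification as quoted) excludes one satisfying the finite homotopy rank-sum property with $\pi_1=\Z/2\Z$; indeed Theorem \ref{Classification of E-type affine surfaces of non-general type}(ii) deliberately leaves this configuration alive in the classification.

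The fix is that the proposition never requires $e(X)=0$; its conclusion concerns only the strongly minimal model, and the paper's proof handles finite $\pi_1(X)$ head-on rather than excluding it. In that case Corollary \ref{Cor:Finite Fundamental Group of Elliptic Type Affine Surface} gives $\pi_1(X)=\Z/2\Z$ and $e(X)=1$, and since $e(X_0)\leq e(X)$ with equality if and only if $i\colon X_0\hookrightarrow X$ is an isomorphism, either $X=X_0$ --- impossible, because Kojima's Table~1 contains no strongly minimal surface with $e=1$ and $\pi_1=\Z/2\Z$ --- or $e(X_0)\leq 0$, which by your own Betti-number computation (or directly from the table) forces $e(X_0)=0$ and hence $X_0\cong\C^\ast\times\C^\ast$ or $H[-1,0,-1]$. (A small residual point, present in the paper as well: the simply connected case needs a word --- contractible $X$ is ruled out by Remark \ref{Rem: Smooth Z-homology planes}, since a contractible surface is a $\Z$-homology plane and none exists with $\Kbar=0$.) So your scaffolding survives, but the last paragraph must be replaced by this $X=X_0$-versus-$e(X_0)=0$ dichotomy; as written, your route funnels through a statement that is both unproven and stronger than what is true.
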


\begin{proof}
The given condition implies that $\pi_2(X)$ is of finite rank. 

{\bf Case 1.}\, \emph{$\pi_1(X)$ is infinite.}

Theorem \ref{Thm: Kappa 0, pi_2 finite rank} says that $X$ is isomorphic to either $\C^*\times \C^*$ or Fujita's
$H[-1,\,0,\,-1]$. Therefore its strongly minimal model is itself.

{\bf Case 2.}\, \emph{$\pi_1(X)$ is finite.}

Since $X$ satisfies the finite homotopy rank-sum property, Corollary \ref{Cor:Finite Fundamental Group of Elliptic Type Affine Surface} says that $\pi_1(X)\,=\,\Z/2\Z$ with
$e(X)\,=\,1$. Let $X_0$ be the strongly minimal model of $X$. Recall that, $e(X_0)\,\leq\, e(X)$, and the equality holds if and only
if the usual open immersion $i\,:\, X_0 \, \hookrightarrow \, X$ is an isomorphism (see the proof of Theorem
\ref{Thm: Kappa 0, pi_2 finite rank}). Therefore,
\begin{enumerate}
\item[(i)] either $X\,=\,X_0$,

\item[(ii)] or $e(X_0)\,=\,0$.
\end{enumerate}

We will show that (i) does not occur. If $X\,=\, X_0$, then $X_0$ is a strongly minimal model with $e(X_0)\,=\,e(X)\,=\,1$ and
$\pi_1(X_0)\,=\,\Z/2\Z$. But from Kojima's list (cf. \cite[Table 1]{Koj}) it is evident that there does not exist such a surface.

Thus, $e(X_0)\,=\,0$. Once again from Kojima's list (cf. \cite[Table 1]{Koj}) it follows that $X_0$ is isomorphic to either
$\C^*\times \C^*$ or Fujita's $H[-1,\,0,\,-1]$. 
\end{proof}

\begin{theorem}\label{Thm: Kappa 1, pi_2 finite rank}
Let $X$ be a smooth complex affine surface with $\Kbar(X)\,=\,1$ such that $\pi_1(X)$ is infinite and
$\dim \pi_2(X)\otimes_{\Z}\Q\, <\, \infty$. Then $X$ admits a unique $\C^*$-fibration $f\,:\, X\, \longrightarrow\,B$ onto a
smooth algebraic curve $B$ not isomorphic to $\PP^1$ such that all the fibers of $f$, if taken with reduced structures, are isomorphic to $\C^*$. This assertion is equivalent to the fact that
$e(X)\,=\,0$. Consequently, $X$ is an Eilenberg-MacLane $K(\pi,\,1)$-space.
\end{theorem}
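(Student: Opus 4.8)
The plan is to reduce the statement to the already-proved Theorem~\ref{Surfaces admitting C^*-fibration with pi_2 of finite rank}. That theorem assumes exactly the hypotheses we have here (smooth affine $X$, $|\pi_1(X)|=\infty$, $\dim\pi_2(X)\otimes_\Z\Q<\infty$) together with one extra ingredient: that $X$ carries a $\C^*$-fibration over a smooth algebraic curve. Hence the only genuinely new task is to \emph{produce} such a $\C^*$-fibration, and this is the sole place where the hypothesis $\Kbar(X)=1$ enters. Once a $\C^*$-fibration $f\,:\,X\,\longrightarrow\,B$ is available, parts (1), (2), (3) of Theorem~\ref{Surfaces admitting C^*-fibration with pi_2 of finite rank} deliver, in order, that every fiber of $f$ with its reduced structure is $\C^*$, that $e(X)=0$, and that $X$ is an Eilenberg--MacLane $K(\pi,\,1)$-space.

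To construct $f$, I would pass to the logarithmic Iitaka fibration of $X$. Because $\Kbar(X)=1$, the Iitaka fibration has a one-dimensional image, so after choosing a suitable smooth completion it is a fibration onto a curve $B$ whose general fiber $F$ satisfies $\Kbar(F)=0$, and it restricts to a morphism $f\,:\,X\,\longrightarrow\,B$ onto a smooth algebraic curve. Since $X$ is affine, the closed fiber $F$ is a smooth affine curve; and the only smooth affine curve of logarithmic Kodaira dimension $0$ is $\C^*$ (the affine line $\A^1$ has $\Kbar=-\infty$, while a complete curve cannot be contained in the affine surface $X$). Thus $f$ is a $\C^*$-fibration. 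This is precisely the structure statement underlying the $\Kbar=1$ case of \cite{GGH} (see also \cite{Miy1}), and I would cite it there; the affineness of $X$ is exactly what forbids an Iitaka fibration with complete general fibers.

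With $f$ produced, applying Theorem~\ref{Surfaces admitting C^*-fibration with pi_2 of finite rank} finishes most of the statement: part (1) gives that all reduced fibers are $\C^*$, part (2) gives $e(X)=0$, and the equivalence between ``all reduced fibers are $\C^*$'' and ``$e(X)=0$'' is recorded in Remark~\ref{Rem: e=0 implies all fibers C* with reduced structure}, yielding the claimed equivalence with $e(X)=0$. For $B\,\not\cong\,\PP^1$, I would argue by contradiction: if $B\,\cong\,\PP^1$, then $f\,:\,X\,\longrightarrow\,\PP^1$ is a morphism all of whose reduced fibers are $\C^*$, so Lemma~\ref{Lem: e=0 surface admitting C*-fibration over P1 is never affine} would force $X$ to be non-Stein, contradicting that $X$ is affine. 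The Eilenberg--MacLane conclusion is then immediate from part (3).

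The step I expect to be the main obstacle is the \emph{uniqueness} of the $\C^*$-fibration (and, to a lesser extent, the clean descent of the Iitaka fibration to an honest morphism on $X$ rather than merely on a completion). Uniqueness should follow from the fact that any $\C^*$-fibration on a surface with $\Kbar=1$ must coincide with the canonical logarithmic Iitaka fibration: a second $\C^*$-fibration $g\,:\,X\,\longrightarrow\,B'$ has general fibers of logarithmic Kodaira dimension $0$, and the logarithmic subadditivity formula together with the intrinsic nature of the Iitaka map forces the two fibrations to share the same general fibers, hence to agree up to an isomorphism of the base. I would make this precise by recalling that for $\Kbar=1$ the $\C^*$-fibration is characterized as the unique fibration whose general fibers have $\Kbar=0$, citing the corresponding uniqueness result in \cite{Miy1} (or \cite{GGH}); the care needed is to run this comparison on $X$ itself, ruling out two distinct $\C^*$-pencils whose general members would otherwise meet in only finitely many points.
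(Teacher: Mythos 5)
Your proposal is correct and follows essentially the same route as the paper: the paper's proof also obtains the $\C^*$-fibration from $\Kbar(X)=1$ via Kawamata's structure theorem (cited as \cite{Ka} and \cite[Theorem (6.11)]{Fuj1}, which is precisely the log Iitaka fibration argument you sketch), declares uniqueness well-known (your Iitaka-characterization sketch is the standard justification), and then invokes Theorem~\ref{Surfaces admitting C^*-fibration with pi_2 of finite rank} together with Remark~\ref{Rem: e=0 implies all fibers C* with reduced structure}. Your derivation of $B\not\cong\PP^1$ from Lemma~\ref{Lem: e=0 surface admitting C*-fibration over P1 is never affine} is likewise exactly how the paper handles it, inside the proof of part (3) of that theorem.
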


\begin{proof}
Since $\Kbar(X)\,=\,1$, by Kawamata's result, \cite{Ka}, \cite[Theorem (6.11)]{Fuj1}, there exists a $\C^\ast$-fibration $f\,:\, X\, 
\longrightarrow\,B$ onto a smooth algebraic curve $B$. It is well-known that this $\C^\ast$-fibration is also unique. Rest follows immediately from Theorem \ref{Surfaces admitting C^*-fibration with pi_2 of finite rank}. (Note that Remark \ref{Rem: e=0 implies all fibers C* with reduced structure} is also used here).
\end{proof} 

\subsection{Conclusions}
 
Theorems \ref{Thm: Kappa negative, pi_2 finite rank}, \ref{Thm: Kappa 0, pi_2 finite rank} and \ref{Thm: Kappa 1, pi_2 finite rank} 
together give the following:

\begin{prop}\label{Prop: Infinite pi_1 and finite rank pi_2 imply EM-ness for smooth affine non-general type surfaces}
Let $X$ be a smooth complex affine surface of the non-general type having an infinite fundamental group. If $\pi_2(X)$
is of finite rank, then $X$ is an Eilenberg--MacLane $K(\pi,\,1)$-space.
\end{prop}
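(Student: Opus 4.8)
The plan is to reduce the proposition to a case analysis on the logarithmic Kodaira dimension $\Kbar(X)$, since the three preceding theorems already carry out all the substantive fibration-theoretic work. First I would recall that for a smooth algebraic surface the invariant $\Kbar$ takes values in $\{-\infty,\,0,\,1,\,2\}$, and that a surface is of \emph{general type} precisely when $\Kbar = 2$. Hence the hypothesis that $X$ is of non-general type is equivalent to the trichotomy $\Kbar(X) \in \{-\infty,\,0,\,1\}$, and these three cases are exhaustive and mutually exclusive.

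Next I would note that the two standing hypotheses of the proposition --- that $\pi_1(X)$ is infinite and that $\pi_2(X)$ is of finite rank, i.e.\ $\dim \pi_2(X)\otimes_\Z\Q < \infty$ --- coincide exactly with the shared hypotheses of Theorems \ref{Thm: Kappa negative, pi_2 finite rank}, \ref{Thm: Kappa 0, pi_2 finite rank} and \ref{Thm: Kappa 1, pi_2 finite rank}. Thus each of the three cases falls directly within the scope of one of these results, and no additional verification is required in order to apply them.

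I would then conclude case by case. If $\Kbar(X) = -\infty$, then Theorem \ref{Thm: Kappa negative, pi_2 finite rank} shows that $X$ is an $\A^1$-bundle over a smooth curve not isomorphic to $\A^1$ or $\PP^1$, and hence a $K(\pi_1(X),\,1)$-space. If $\Kbar(X) = 0$, then Theorem \ref{Thm: Kappa 0, pi_2 finite rank} identifies $X$ with $\C^* \times \C^*$ or Fujita's $H[-1,0,-1]$, each of which is a $K(\pi,\,1)$-space. If $\Kbar(X) = 1$, then Theorem \ref{Thm: Kappa 1, pi_2 finite rank} yields a $\C^*$-fibration all of whose fibers are isomorphic to $\C^*$ with their reduced structure, and again concludes that $X$ is a $K(\pi,\,1)$-space. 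In every case $X$ is an Eilenberg--MacLane $K(\pi,\,1)$-space, which is the desired conclusion.

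Since the three theorems already package all the delicate homotopy-theoretic and algebro-geometric arguments, the only genuine content at this stage is the exhaustiveness of the trichotomy; there is no real obstacle to overcome beyond confirming that the hypotheses transfer verbatim and that $\Kbar \leq 1$ covers precisely the non-general-type case.
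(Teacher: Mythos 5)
Your proposal is correct and follows exactly the route of the paper, which derives the proposition as an immediate consequence of Theorems \ref{Thm: Kappa negative, pi_2 finite rank}, \ref{Thm: Kappa 0, pi_2 finite rank} and \ref{Thm: Kappa 1, pi_2 finite rank}, using the trichotomy $\Kbar(X) \in \{-\infty,\,0,\,1\}$ that the paper builds in by defining non-general type as $\Kbar(X) \leq 1$. Your only addition is to make explicit the (standard, and correct) exhaustiveness of this trichotomy, which the paper leaves implicit.
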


Combining the results from previous sections we have the following main result of this paper.

\begin{theorem}\label{Classification of E-type affine surfaces of non-general type}
Let $X$ be a smooth complex affine surface of non-general type satisfying the finite homotopy rank-sum property. Then one of the following statements holds:
\begin{enumerate}
\item[(i)] $X$ is an Eilenberg--MacLane $K(\pi_1(X),\,1)$-space.

\item[(ii)] $\pi_1(X)\,=\,\Z/2\Z$, and $X$ satisfies one of the following, equivalent conditions:
\begin{enumerate}
    \item[(a)] the universal cover of $X$ is homotopically equivalent to $S^2$;
    \item[(b)] $e(X)\,=\,1$;
    \item[(c)] $X$ is a smooth affine $\Q$-homology plane.
\end{enumerate}

\item[(iii)] $X$ is homotopically equivalent to $S^2$.
\end{enumerate}
\end{theorem}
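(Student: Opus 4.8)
The plan is to combine the Stein-surface dichotomy of Corollary \ref{Cor: Characterization of E-type Stein surfaces} with the structural input of Proposition \ref{Prop: Infinite pi_1 and finite rank pi_2 imply EM-ness for smooth affine non-general type surfaces} and Corollary \ref{Cor:Finite Fundamental Group of Elliptic Type Affine Surface}, and then to pin down the last case by Euler-characteristic multiplicativity for finite covers. First I would observe that the finite homotopy rank-sum hypothesis forces $\pi_2(X)$ to be of finite rank, so Corollary \ref{Cor: Characterization of E-type Stein surfaces}, applied to the affine (hence Stein) surface $X$, leaves exactly two possibilities: either $\pi_2(X)\,=\,0$, in which case $X$ is an Eilenberg--MacLane $K(\pi_1(X),1)$-space and we land in conclusion (i); or $\pi_2(X)\,\cong\,\Z$, equivalently the universal cover $\wt X$ is homotopic to $S^2$.

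In the second case I would split according to the cardinality of $\pi_1(X)$. If $\pi_1(X)$ were infinite, then since $X$ is smooth affine of non-general type with $\pi_2(X)$ of finite rank, Proposition \ref{Prop: Infinite pi_1 and finite rank pi_2 imply EM-ness for smooth affine non-general type surfaces} would make $X$ a $K(\pi,1)$-space, i.e.\ $\pi_2(X)\,=\,0$, contradicting $\pi_2(X)\,\cong\,\Z$; hence $\pi_1(X)$ is finite. If $\pi_1(X)$ is trivial, then $X\,=\,\wt X$ is homotopic to $S^2$, which is conclusion (iii). If $\pi_1(X)$ is finite and nontrivial, then every element has finite order, so Corollary \ref{Cor:Finite Fundamental Group of Elliptic Type Affine Surface} yields $\pi_1(X)\,\cong\,\Z/2\Z$ together with $e(X)\,=\,1$; this places us in conclusion (ii).

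It then remains to check that, under $\pi_1(X)\,\cong\,\Z/2\Z$, the conditions (a), (b), (c) all hold and are equivalent, which I would do as a short cycle. For (a)$\Leftrightarrow$(b): the covering $\wt X\,\to\,X$ has degree $2$, so $e(\wt X)\,=\,2\,e(X)$, whence $\wt X\simeq S^2$ (that is, $e(\wt X)\,=\,2$) is the same as $e(X)\,=\,1$. For (b)$\Rightarrow$(c): since $X$ is an affine surface, Theorem \ref{Narasimhan's Theorem} gives $H_i(X;\Q)\,=\,0$ for $i\,\geq\,3$, while $H_1(X;\Q)\,=\,0$ because the abelianization of $\Z/2\Z$ is torsion; then $e(X)\,=\,1-b_1(X)+b_2(X)\,=\,1$ forces $b_2(X)\,=\,0$, so $X$ is a $\Q$-homology plane. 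For (c)$\Rightarrow$(a): a $\Q$-homology plane cannot be a $K(\Z/2\Z,1)$-space, since a finite-dimensional aspherical complex has torsion-free fundamental group (exactly as invoked in the proof of Corollary \ref{Cor:Finite Fundamental Group of Elliptic Type Affine Surface}), so the dichotomy of Corollary \ref{Cor: Characterization of E-type Stein surfaces} leaves only $\pi_2(X)\,\cong\,\Z$, i.e.\ $\wt X\simeq S^2$.

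The hard part will be organizing the case analysis so that the three conclusions are genuinely exhaustive and mutually exclusive. The two delicate points are ruling out an infinite fundamental group once $\pi_2(X)\,\cong\,\Z$ (supplied by Proposition \ref{Prop: Infinite pi_1 and finite rank pi_2 imply EM-ness for smooth affine non-general type surfaces}) and excluding an a priori finite-dimensional $K(\Z/2\Z,1)$, which would otherwise let conclusion (i) absorb case (ii). Both are resolved by combining the cohomological finiteness of Theorem \ref{Narasimhan's Theorem} with the torsion-freeness of fundamental groups of finite aspherical complexes, after which the surviving verifications are the routine Euler-characteristic and Hurewicz (Theorem \ref{Hurewicz Theorem}) computations indicated above.
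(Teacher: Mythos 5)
Your proposal is correct, and it uses the same toolkit as the paper, but you organize the case analysis in the opposite order and you prove strictly more. The paper splits first on the cardinality of $\pi_1(X)$: infinite (handled by Proposition \ref{Prop: Infinite pi_1 and finite rank pi_2 imply EM-ness for smooth affine non-general type surfaces}), trivial (handled by Theorem \ref{Characterization of Elliptic Homotopy Type Surfaces}), and finite nontrivial (handled by Corollary \ref{Cor:Finite Fundamental Group of Elliptic Type Affine Surface}, followed by the Betti computation $b_2(X)=e(X)+b_1(X)-1=0$ and the exclusion of a contractible universal cover via torsion-freeness of fundamental groups of aspherical spaces). You instead lead with the $\pi_2$-dichotomy of Corollary \ref{Cor: Characterization of E-type Stein surfaces} and only then split on $\pi_1(X)$ inside the branch $\pi_2(X)\cong\Z$; this is logically equivalent and arguably cleaner, since it makes visible at the outset why conclusions (i) and (ii)--(iii) are mutually exclusive. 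The genuine added value of your write-up is the explicit cycle (a)$\Leftrightarrow$(b), (b)$\Rightarrow$(c), (c)$\Rightarrow$(a): the paper's proof merely shows that all three conditions hold simultaneously in case (ii) (which makes their equivalence vacuous in that case), whereas you verify the equivalences as standing implications under the hypotheses $\pi_1(X)=\Z/2\Z$ plus finite homotopy rank-sum, which is the more faithful reading of the theorem's phrasing. One step you compress: in (b)$\Rightarrow$(a), the deduction from $e(\wt X)=2$ to $\wt X\simeq S^2$ is not automatic from the Euler characteristic alone; it needs the dichotomy of Theorem \ref{Characterization of Elliptic Homotopy Type Surfaces} (the universal cover is contractible or homotopic to $S^2$, and $e$ distinguishes the two, the contractible case having $e=1$). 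Since you invoke exactly this dichotomy in your (c)$\Rightarrow$(a) step, the ingredient is at hand and the gloss is harmless, but it should be said explicitly.
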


\begin{proof}
Suppose that $X$ has an infinite fundamental group. Then using Proposition \ref{Prop: Infinite pi_1 and finite rank pi_2 imply EM-ness 
for smooth affine non-general type surfaces} it follows that $X$ is an Eilenberg--MacLane $K(\pi,\,1)$-space.

Assume that $X$ has a finite fundamental group. If $X$ is simply connected, then by Theorem \ref{Characterization of Elliptic 
Homotopy Type Surfaces} either $X$ is contractible (this is also an Eilenberg--MacLane $K(\pi,\,1)$-space with $\pi\,=\,\pi_1(X)\,=\,(1)$) or 
$X$ is homotopic to $S^2$.

Let $\pi_1(X)$ be non-trivial finite. Then Corollary \ref{Cor:Finite Fundamental Group of Elliptic Type Affine Surface}
implies that $\pi_1(X)\,=\,\Z/2\Z$ and $e(X)\,=\,1$. Therefore $$b_2(X)\,=\,e(X)\,+\,b_1(X)\,-\,1\,=\,0,$$ and
hence $X$ is a smooth affine $\Q$-homology plane with
\begin{equation}\label{ec}
\pi_1(X)\,=\,\Z/2\Z.
\end{equation}
In this case, since the universal cover $\wt{X}$ of $X$ is a 
smooth affine surface of the elliptic homotopy type, it turns out that $\wt{X}$ cannot be contractible. Indeed, for
$\wt{X}$ being contractible $X$ becomes an 
Eilenberg--MacLane $K(\pi,\,1)$-space which must have a torsion-free fundamental group, but this contradicts
\eqref{ec}. Therefore, in this case, the universal cover of $X$ is homotopic to $S^2$. This completes the proof.
\end{proof}

\begin{example}\label{Ex: E-type affine surfaces of non-general type}

We will show that each one of the three classes of smooth affine surfaces described in Theorem 
\ref{Classification of E-type affine surfaces of non-general type} is actually non-empty.

Theorem \ref{Result : Classification} gives a complete classification of the smooth affine non-contractible Eilenberg--MacLane surfaces of non-general type.

An example of a surface described in (ii) of Theorem \ref{Classification of E-type affine surfaces of non-general type} is the 
hypersurface $H$ in $\A^3$ defined by
the equation $z^2\,=\,x(xy-1)$. Define a morphism 
$\varphi\,:\, H\, \longrightarrow\,\A^1$ by $(x,\,y,\,z)\, \longmapsto\, x$. Clearly, $\varphi$ is an $\A^1$-fibration with 
only one singular fiber over the origin in $\A^1$ such that scheme-theoretically $\varphi^*(0)\,=\,2\A^1$. Thus by Lemma \ref{Lem: Gang's Generalization}, it 
follows that $\pi_1(H)\,=\,\Z/2\Z$, and by Suzuki's formula (cf. \cite[Proposition 2]{Suz}) it turns out that $e(H)\,=\,1$. Therefore $H$ is uniformized by a
smooth affine simply connected surface, say $X$, which is a $2$-fold covering of $H$. Hence $e(X)\,=\,2\,\cdot\,e(H)\,=\,2$ and thus $X$ is a Moore $M(\Z,\,
2)$-space, i.e, $X$ is homotopic to $S^2$. This shows that $H$ satisfies the finite homotopy rank-sum property.

An example of a surface as described in (iii) of Theorem \ref{Classification of E-type affine surfaces of non-general type} is the 
hypersurface in $\A^3$ defined by $x^2+y^2+z^2\,=\,1$. We call this the \emph{complex $2$-sphere} in $\C^3$. It is 
well-known that this surface is homotopy equivalent to a real $2$-sphere in $\R^3$. In fact, the real $2$-sphere is a deformation 
retraction of the complex $2$-sphere.

Another example of a surface as described in (iii) of Theorem \ref{Classification of 
E-type affine surfaces of non-general type} is $V\,:=\,\PP^1\times \PP^1-\Delta$, where $\Delta$ is the diagonal of $\PP^1\times \PP^1$. 
Since $\Delta$ is an ample divisor in $\PP^1\times \PP^1$, the complement $V$ is a smooth affine surface. Considering the usual 
$\PP^1$-ruling over $\PP^1$ in the projective surface $\PP^1\times \PP^1$, we can see that $\Delta$ is a smooth rational curve which 
meets every vertical $\PP^1$ once transversally.
Thus $V$ is an $\A^1$-bundle over $\PP^1$, which implies that $V$ is homotopic to $\PP^1\,=\, S^2$.
\end{example}

\section{On homotopy groups of affine homology planes}\label{se7}

In this section, we will analyze the finite rank-sum property of higher homotopy groups for a very special class of smooth algebraic
surfaces, namely the $\Q$-homology planes. It is known that $\Q$-homology planes are affine (cf. \cite[2.5]{Fuj2}).

\begin{prop}\label{Prop: No non-contractible smooth EM Q-homology plane}
A smooth $\Q$-homology plane of non-general type is Eilenberg--MacLane if and only if it is contractible.
\end{prop}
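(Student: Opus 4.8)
The plan is to treat the two implications separately, the forward one being immediate. If $X$ is contractible then all of its homotopy groups vanish, so it is an Eilenberg--MacLane space in the sense of Definition \ref{Def: EM Spaces} (namely $K(\{1\},\,1)$); this settles the ``if'' part. For the converse I assume $X$ is a smooth $\Q$-homology plane with $\Kbar(X)\,\leq\,1$ that is Eilenberg--MacLane, and I must prove that $X$ is contractible. Should $X$ be a $K(G,\,n)$ with $n\,\geq\,2$, then $X$ is simply connected; since a $\Q$-homology plane has $H_2(X;\,\Q)\,=\,0$ while $H_2(X;\,\Z)$ is free for the Stein surface $X$ (cf. \cite[Korollar, Satz 1]{Ham}), Theorem \ref{Hurewicz Theorem} gives $\pi_2(X)\,\cong\,H_2(X;\,\Z)\,=\,0$, and together with $H_j(X;\,\Z)\,=\,0$ for $j\,\geq\,3$ from Theorem \ref{Narasimhan's Theorem} a repeated use of Theorem \ref{Hurewicz Theorem} makes every homotopy group vanish, whence $X$ is contractible by Theorem \ref{Whitehead's Theorem}. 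Thus I may assume $X\,=\,K(\pi,\,1)$, and the task reduces to showing $\pi\,=\,(1)$.

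The heart of the argument is to suppose, for contradiction, that $X$ is a \emph{non-contractible} $K(\pi,\,1)$, and to play the classification Theorem \ref{Result : Classification} against the identity $e(X)\,=\,b_0(X)-b_1(X)+b_2(X)\,=\,1$ satisfied by every $\Q$-homology plane. I would run through its three cases. If $\Kbar(X)\,=\,-\infty$, then $X$ is an $\A^1$-bundle over a smooth curve $B\,\not\cong\,\A^1,\,\PP^1$, so by multiplicativity of the Euler characteristic $e(X)\,=\,e(\A^1)\cdot e(B)\,=\,e(B)$, and a connected smooth curve has Euler characteristic $1$ only when it is isomorphic to $\A^1$; hence $e(X)\,\neq\,1$. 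If $\Kbar(X)\,=\,0$, then $X\,\cong\,\C^*\times\C^*$, which has $e\,=\,0$, or $X\,\cong\,H[-1,0,-1]$, which admits a $2$-fold unramified cover isomorphic to $\C^*\times\C^*$ and hence also has $e\,=\,0$. If $\Kbar(X)\,=\,1$, then $X$ carries a $\C^*$-fibration all of whose fibers are isomorphic to $\C^*$ when taken with the reduced structure, so Suzuki's formula (cf. \cite[Proposition 2]{Suz}) yields $e(X)\,=\,0$. In every case $e(X)\,\neq\,1$, contradicting that $X$ is a $\Q$-homology plane; therefore $\pi$ is trivial and $X$ is contractible.

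The only substantive input is external, namely the classification Theorem \ref{Result : Classification} of non-contractible smooth affine $K(\pi,\,1)$-surfaces of non-general type; granting it, the remaining work is the Euler-characteristic bookkeeping above. The mildly delicate point will be the surface $H[-1,0,-1]$, for which I deduce $e\,=\,0$ not directly but by passing to its double {\'e}tale cover $\C^*\times\C^*$ and using multiplicativity of the Euler characteristic under finite coverings. The reduction carried out in the first paragraph shows that the possibility $n\,\geq\,2$ in the definition of an Eilenberg--MacLane space contributes nothing new, so all of the content lies in separating, among the $K(\pi,\,1)$-surfaces of non-general type, the contractible ones by means of the obstruction $e(X)\,=\,1$.
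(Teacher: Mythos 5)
Your proof is correct and takes essentially the same route as the paper: both reduce to the classification Theorem \ref{Result : Classification} and contradict the $\Q$-homology-plane condition, the paper using $H_1(X;\Q)\,\cong\,H_1(B;\Q)\,\neq\,0$ in the $\Kbar(X)\,=\,-\infty$ case and $e(X)\,=\,0\,\neq\,1$ when $\Kbar(X)\,\in\,\{0,\,1\}$, while you uniformly run the Euler-characteristic obstruction (via $e(B)\,\neq\,1$ for $B\,\not\cong\,\A^1,\,\PP^1$, and via the double cover $\C^*\times\C^*$ for $H[-1,0,-1]$). Your explicit treatment of the trivial direction and of the $K(G,\,n)$, $n\,\geq\,2$, possibility (killed by Theorems \ref{Hurewicz Theorem}, \ref{Narasimhan's Theorem} and \cite[Korollar, Satz 1]{Ham}) is a harmless completeness addition that the paper leaves implicit.
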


\begin{proof}
Let $X$ be a smooth $\Q$-homology plane of non-general type which is an Eilenberg--MacLane space. If $X$ is non-contractible with 
$\Kbar(X)\,=\,-\infty$, then using the classification result of \cite{GGH} (cf. Theorem \ref{Result : Classification}(1)), $X$ is an 
$\A^1$-bundle over a smooth algebraic curve $B$, not isomorphic to $\A^1$ or $\PP^1$. Therefore $X$ is homotopy equivalent to $B$. 
Since $B$ is not isomorphic to $\A^1$ or $\PP^1$, we have $H_1(X;\,\Q)\,\cong \, H_1(B;\,\Q)\,\not=\, 0$. This 
contradicts the fact that $X$ is a $\Q$-homology plane. Hence $X$ is contractible, and hence $X$ is isomorphic to $\A^2$ in this case (see Remark \ref{Rem: Smooth Z-homology planes}). If
$\Kbar(X)\, \in\, \{0,\, 1\}$, and $X$ is non-contractible, then being Eilenberg--MacLane, it follows from the classification
result of \cite{GGH} (cf. Theorems \ref{Result : Classification}(2), \ref{Result : Classification}(3)) that $e(X)\,=\,0$. But
$X$ being a $\Q$-homology plane has Euler characteristic $1$. Hence $X$ is contractible whenever $\Kbar(X)\,=\,0$ or $1$. This
completes the proof.
\end{proof}

\begin{remark}\label{Rem: Smooth Z-homology planes}
By a result of Miyanishi (cf. \cite[Chapter 3, Theorem 4.7.1(3)]{Miy2}), a smooth $\Z$-homology plane with $\Kbar\,=\,-\infty$ is isomorphic to 
$\A^2$. In \cite{Fuj2}, Fujita listed all possible boundary divisors of a smooth NC-minimal affine surface with $\Kbar\, = \,0$; a 
striking consequence of his work is the result that there is no smooth $\Z$-homology plane (in particular, contractible)
with $\Kbar\,=\,0$ (see also \cite[Chapter 3, Theorem 4.7.1(1)]{Miy2}). So
Proposition \ref{Prop: No non-contractible smooth EM Q-homology plane} actually 
proves that if $X$ is a smooth $\Q$-homology plane of non-general type which is also an Eilenberg--MacLane space, then either $X$ is 
isomorphic to $\A^2$ or it is a contractible surface with $\Kbar\,=\,1$.
\end{remark}

We now generalize the result proved in Proposition \ref{Prop: No non-contractible smooth EM Q-homology plane} as follows.

\begin{cor}\label{Cor: Smooth Q-homology plane of E-type}
Let $X$ be a smooth $\Q$-homology plane of non-general type satisfying the finite homotopy rank-sum property. Then either $X$ is contractible or a smooth affine $\Q$-homology plane with $\pi_1(X)\,=\,\Z/2\Z$.
\end{cor}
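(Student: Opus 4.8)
The plan is to reduce everything to the classification already obtained in Theorem \ref{Classification of E-type affine surfaces of non-general type} and then discard the single case that is incompatible with the $\Q$-homology plane condition. First I would observe that a $\Q$-homology plane is by definition a smooth complex affine surface (as recalled at the opening of this section), so the hypotheses of Theorem \ref{Classification of E-type affine surfaces of non-general type} are all satisfied: $X$ is smooth, affine, of non-general type, and satisfies the finite homotopy rank-sum property. Consequently $X$ must fall into exactly one of the three listed cases (i), (ii), (iii), and it suffices to analyze each.

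In case (i), $X$ is an Eilenberg--MacLane $K(\pi_1(X),\,1)$-space. Here I would quote Proposition \ref{Prop: No non-contractible smooth EM Q-homology plane}, which says precisely that a smooth $\Q$-homology plane of non-general type is Eilenberg--MacLane if and only if it is contractible; hence $X$ is contractible, yielding the first alternative of the corollary. In case (ii), the theorem's conclusion already states that $\pi_1(X)\,=\,\Z/2\Z$ and that $X$ is a smooth affine $\Q$-homology plane, which is verbatim the second alternative, so no further work is needed.

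The one step requiring an actual (if short) argument is ruling out case (iii), where $X$ is homotopically equivalent to $S^2$. Since $X$ is a $\Q$-homology plane, $H_i(X;\,\Q)\,=\,0$ for all $i\,\geq\,1$, whereas homotopy equivalence with $S^2$ forces $H_2(X;\,\Q)\,\cong\,H_2(S^2;\,\Q)\,\cong\,\Q\,\neq\,0$; this contradiction eliminates (iii). Combining the two surviving cases gives the stated dichotomy. I do not anticipate any genuine obstacle: once Theorem \ref{Classification of E-type affine surfaces of non-general type} and Proposition \ref{Prop: No non-contractible smooth EM Q-homology plane} are in hand, the proof is a three-way case split whose only nontrivial input is the immediate homological incompatibility of the $2$-sphere with the vanishing of rational homology demanded of a $\Q$-homology plane.
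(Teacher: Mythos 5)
Your proposal is correct and follows essentially the same route as the paper, whose proof is simply the citation of Theorem \ref{Classification of E-type affine surfaces of non-general type} together with Proposition \ref{Prop: No non-contractible smooth EM Q-homology plane}. Your only addition is to spell out the exclusion of case (iii) via $H_2(S^2;\,\Q)\,\cong\,\Q\,\neq\,0$ contradicting the vanishing rational homology of a $\Q$-homology plane, a step the paper leaves implicit and which you handle correctly.
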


\begin{proof}
This follows from Theorem \ref{Classification of E-type affine surfaces of non-general type} and Proposition \ref{Prop: No 
non-contractible smooth EM Q-homology plane}.
\end{proof}

\begin{cor}\label{Cor: No non-contractible smooth Z-homology plane of E-type}
Let $X$ be a smooth $\Z$-homology plane of non-general type satisfying the finite homotopy rank-sum property. Then $X$ is contractible.
\end{cor}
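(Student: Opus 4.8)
The plan is to reduce directly to the preceding corollary. First I would observe that every smooth $\Z$-homology plane is in particular a smooth $\Q$-homology plane: the condition $H_i(X;\,\Z)\,=\,0$ for all $i\,\geq\,1$ forces $H_i(X;\,\Q)\,\cong\, H_i(X;\,\Z)\otimes_\Z\Q\,=\,0$ for all $i\,\geq\,1$, since $\Q$ is flat over $\Z$ and hence the universal coefficient theorem introduces no torsion contribution. Thus the hypotheses of Corollary \ref{Cor: Smooth Q-homology plane of E-type} are satisfied, and that corollary yields the dichotomy that $X$ is either contractible or a smooth affine $\Q$-homology plane with $\pi_1(X)\,\cong\,\Z/2\Z$.

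Next I would rule out the second alternative. The standard identification of the first integral homology group with the abelianization of the fundamental group gives $H_1(X;\,\Z)\,\cong\,\pi_1(X)^{\mathrm{ab}}$. If $\pi_1(X)\,\cong\,\Z/2\Z$, which is already abelian, then $H_1(X;\,\Z)\,\cong\,\Z/2\Z$ is a nonzero (torsion) group. This directly contradicts the defining property of a $\Z$-homology plane, for which $H_1(X;\,\Z)\,=\,0$ by definition.

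Therefore the case $\pi_1(X)\,\cong\,\Z/2\Z$ is impossible for a $\Z$-homology plane, and only the first alternative of Corollary \ref{Cor: Smooth Q-homology plane of E-type} survives; hence $X$ is contractible. I do not expect any genuine obstacle here: the entire content lies in the earlier classification, and the new input is merely the elementary remark that the $\Z/2\Z$ torsion appearing in the exceptional case of the $\Q$-homology plane statement is detected by integral first homology, which must vanish identically once one passes from $\Q$-coefficients to $\Z$-coefficients.
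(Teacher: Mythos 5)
Your proposal is correct and follows essentially the same route as the paper: reduce to Corollary \ref{Cor: Smooth Q-homology plane of E-type} and then exclude the case $\pi_1(X)\,\cong\,\Z/2\Z$ using the $\Z$-homology plane hypothesis. The paper simply asserts $\pi_1(X)\,\neq\,\Z/2\Z$ without comment; your explicit justification via $H_1(X;\,\Z)\,\cong\,\pi_1(X)^{\mathrm{ab}}$ (and the flatness remark showing a $\Z$-homology plane is a $\Q$-homology plane) just spells out the steps the paper leaves implicit.
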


\begin{proof}
Since $X$ is a $\Q$-homology plane, Corollary \ref{Cor: Smooth Q-homology plane of E-type} says that $\pi_1(X)\,=\,\Z/2\Z$
if $X$ is non-contractible. But $\pi_1(X)\,\not=\,\Z/2\Z$ as $X$ is a $\Z$-homology plane.
\end{proof}

From the above, the following natural question arises.

\begin{question}
Is there any smooth affine $\Z$-homology plane which is an Eilenberg--MacLane space but not contractible?
\end{question}

The authors do not know any such example.

\begin{remark}
It is important to note that smooth contractible surfaces of $\Kbar\,=\,1$ do exist, see e.g. \cite[III.4.8.3, III.4.8.4]{Miy2}.
\end{remark}

\section*{Acknowledgements} 

The authors warmly thank the referee for a very careful reading of the manuscript and making many very useful suggestions to improve 
the exposition.
The authors would like to thank R. V. Gurjar for informing us about the content of the reference \cite{Ham}. The authors are also thankful to A. J. Parameswaran for 
his useful discussions at the beginning of this project.
The first-named author acknowledges the support of a J. C. Bose Fellowship (JBR/2023/000003).

\section*{Statements and Declarations}

There is no conflict of interests regarding this manuscript. No funding was received for it.
All authors contributed equally. No data were generated or used.

\end{document}